\newcolumntype{L}{>{$}l<{$}} 
\newcolumntype{C}{>{$}c<{$}} 
\newcommand{\Ri}[1]{#1}
\newcommand{\Rii}[1]{#1}
\newcommand{\sout}[1]{#1}
\newcommand{\tro}{\mathcal{L}}
\newcommand{\trr}{\mathcal{Q}}
\newcommand{\sch}{\mathcal{S}}
\newcommand{\prj}{\mathcal{P}}
\newcommand{\si}{\mathcal{K}}
\newcommand{\ac}{{\rho}}
\newcommand{\ti}{\mathscr{S}}
\newcommand{\spc}{E}
\newcommand{\osp}{{V_\perp}}
\newcommand{\zsp}{{V}}
\newcommand{\N}{\mathbb{N}}
\newcommand{\R}{\mathbb{R}}
\newcommand{\Z}{\mathbb{Z}}
\newcommand{\V}{\sigma^2}
\newcommand{\D}{\mathrm{D}}
\newcommand{\DD}{\mathrm{DD}}
\newcommand{\DA}{\mathrm{AD}}
\newcommand{\M}{\mathcal{M}}
\newcommand{\E}{\mathcal{E}}
\newcommand{\upd}{U_{P}}
\newcommand{\unp}{U_{NP}}
\newcommand{\uunp}{U^u_{NP}}
\newcommand{\bupd}{\bar U_{P}}
\newcommand{\bunp}{\bar U_{NP}}
\newcommand{\brf}{^{(f)}}
\newcommand{\brg}{^{(g)}}
\newcommand{\Pa}{p}
\newcommand{\Pb}{q}
\newcommand{\Pc}{r}
\newcommand{\as}{\leftarrow}
\newcommand{\Or}{O}
\renewcommand{\O}{\mathcal{O}}
\renewcommand{\upsilon}{\nu} 
\DeclareMathOperator{\id}{id}
\DeclareMathOperator{\midp}{midpoint}
\DeclareMathOperator*{\Lip}{Lip}
\DeclareMathOperator*{\spn}{span}
\DeclareMathOperator*{\sgn}{sgn}
\newtheorem{lemma}{Lemma}
\newtheorem{theorem}{Theorem}
\newtheorem{corollary}{Corollary}
\newtheorem{remark}{Remark}
		\title[Spectral Galerkin methods for transfer operators]{Spectral Galerkin methods for transfer operators in uniformly expanding dynamics}
		\author{Caroline Wormell}
		\address{School of Mathematics and Statistics, University of Sydney, NSW 2006, Australia}
		\email[C. Wormell]{ca.wormell@gmail.com} 
\begin{document}
		\begin{abstract}
		Markov expanding maps, a class of simple chaotic systems, are commonly used as models for chaotic dynamics, but existing numerical methods to study long-time statistical properties such as invariant measures have a poor trade-off between computational effort and accuracy. We develop a spectral Galerkin method for these maps' transfer operators, estimating statistical quantities using finite submatrices of the transfer operators' infinite Fourier or Chebyshev basis coefficient matrices. Rates of convergence of these estimates are obtained via quantitative bounds on the full transfer operator matrix entries; we find the method furnishes up to exponentially accurate estimates of statistical properties in only a polynomially large computational time. 

To implement these results we suggest and demonstrate two algorithms: a rigorously-validated algorithm, and a fast, more convenient adaptive algorithm. Using the first algorithm we prove rigorous bounds on some exemplar quantities that are \Rii{substantially} more accurate than previous. We show that the adaptive algorithm can produce double floating-point accuracy estimates in a fraction of a second on a personal computer.  
		\end{abstract}
		\keywords{Chaotic dynamics, circle maps, interval maps, invariant measures, transfer operators, spectral methods}

	\maketitle

\section{Introduction}\label{s:introduction}

One-dimensional full-branch Markov uniformly expanding maps are an important class of chaotic dynamical systems: as well as being common toy models, complex chaotic systems may be reduced to this class\Rii{, \sout{for example by inducing}}. Mathematically, these maps are endomorphisms $f$ on a compact and connected one-dimensional manifold $\Lambda$ for which there are a set of disjoint open intervals $(\O_\iota)_{\iota\in I}$ of full measure such that the $f|_{\O_\iota}$ are injective with $\overline{f(\O_\iota)} = \Lambda$, and on the $\O_\iota$, $f$ is differentiable with $ |f'| \geq \lambda > 1$. A standard example of such a map is the Lanford map, defined on $[0,1]$ with $f(x) = 2x + \frac{1}{2} x (1-x) \mod 1$.

Many significant properties of these maps can be determined from a \Rii{linear-algebraic} object: the so-called transfer operator $\tro: BV(\Lambda) \to BV(\Lambda)$ with action
\begin{equation} (\tro \phi)(x) = \sum_{f(y) = x} \frac{1}{|f'(y)|} \phi(y), \label{transfer_action}\end{equation}
where $BV(\Lambda)$ denotes the space of functions of bounded variation on $\Lambda$.
For example, the central limit theorem for the long-time average of an observable $\phi \in BV(\Lambda)$ for initial condition $x_0$ sampled from a $BV$ density can be written
\[ \frac{\sum_{i=0}^n \phi(f^i(x_0)) - n \langle \phi \rangle}{\sigma_f(\phi) \sqrt{n}} \xrightarrow{n\to\infty}_d N(0,1), \]
with formulae for the parameters
\begin{align} \langle \phi \rangle &:= \int_{\Lambda} \phi\, \rho\, dx\\
\sigma^2_f(\phi) &:= \int_{\Lambda} \phi \, \sum_{n=-\infty}^\infty \tro^{|n|} \big(\left(\phi - \langle \phi \rangle\right)\, \rho\big)\, dx, \label{birkhoff_variance_sum}
\end{align}
where the so-called invariant density $\rho$ is the unique 1-eigenfunction of $\tro$ with $\int_{\Lambda} \rho\, dx = 1$.

Transfer operator problems cannot in general be solved analytically, and numerical approaches are therefore of prime importance. One scheme that has been widely studied in the literature is Ulam's method, whereby one projects the transfer operator onto a subspace of characteristic functions (i.e. discretises the phase space) and computes statistical properties on this discretisation \cite{GAIO}. Ulam's method is effective for a broad array of families of chaotic systems \cite{Froyland11,Froyland07,Murray10}, and in particular Ulam estimates for a variety of statistical quantities have been proven to converge for uniformly expanding maps \cite{Froyland07, Bahsoun16}. Higher-order generalisations of Ulam's method have also been used, in particular to compute quantities such as linear response that require a higher degree of regularity \cite{Bahsoun18}; theory for a wavelet-based method has also been developed \cite{Holschneider96}. At the same time, Pollicott, Jenkinson and others have presented a completely different approach, wherein one computes statistical properties using the theory of dynamical zeta functions: this involves computing sums over periodic orbits of the system \cite{Jenkinson05,Jenkinson17}. Zeta function-based methods have furnished the most accurate estimates in the literature.

The approach we take in this paper is to construct a so-called spectral Galerkin approximation, whereby one considers the transfer operator \Rii{of the dynamical system of interest} and functions it acts on in a basis of orthogonal polynomials and restricts to finite-dimensional spaces $E_N \leq BV$ spanned by low-index elements in the orthogonal basis. 

We consider the Fourier exponential basis $e_k(x) = e^{ikx},\, k \in \Z$, which is orthogonal in $L^2([0,2\pi))$, and the Chebyshev polynomial basis $T_k(x) = \cos k \cos^{-1} x,\, k \in \N$, which is orthogonal on $[-1,1]$ with respect to the weight $(1-x^2)^{-1/2}$.
 
In our theoretical results we find that, providing the maps under consideration exhibit sufficient regularity, these kinds of spectral methods provide up to exponentially fast convergence with a small numerical outlay. 
Our main theoretical results are that spectral Galerkin estimates of acims and the $1$-resolvent converge exponentially fast in the approximation order $N$ for analytic maps, and as $\Or(N^{2.5-r})$ for $C^r$ maps (we will be more specific about the kinds of map we consider in Section \ref{ss:setup}). The algorithmic outlay of our method is $\Or(N^3)$. These results are summarised respectively in Corollaries \ref{c:acim} and \ref{c:sum} in Section \ref{ss:theorems}. 

We obtain these results by defining a so-called solution operator that allows one to access transfer operator resolvent data at eigenvalue $1$ (Theorem \ref{t:solution} in Section \ref{ss:theorems}), and then showing that spectral Galerkin approximations of this solution operator converge at the aforementioned rates (Theorem \ref{t:main}). These rates of convergence are determined via bounds on entries of the spectral basis matrix representations of transfer operators, proved in Theorems \ref{t:entry_p}-\ref{t:entry_np}.

This theoretical work carries out some of the directions for further research suggested in \cite{Baladi99}, which proved convergence of eigenvalue and eigenvector estimates of transfer operators of circle maps in a wavelet basis (a transformation of a Fourier basis). In particular, we extend from periodic intervals to non-periodic intervals, and establish quantitative convergence rates for the invariant density and resolvent data at the eigenvalue $1$.

To illustrate the power of spectral methods, we apply a rigorously justified spectral method to the Lanford map to get bounds for the Lyapunov exponent and diffusion coefficient to 123 decimal places (see Theorem \ref{t:rig} in Section \ref{ss:theorems}).

We also demonstrate that adaptive spectral methods allow for very fast and user-friendly computation of statistical properties, via an implementation of transfer operator spectral methods in the Julia package Poltergeist. Using Poltergeist, the quantities in Theorem \ref{t:rig} may be estimated in under $0.1$ seconds on a personal computer to 13 decimal places of accuracy; the package also allows for computation of a great many statistical quantities not included here.

The rates of convergence we obtain compare very favourably with other approaches. While set-based approaches (Ulam's method) cover a much larger class of maps than we consider, they have an optimal convergence rate of only $\Or(\log(N)/N)$ irrespective of regularity, where $N$ is the size of the Ulam matrix. Spectral methods are also significantly more efficient than algorithms that use periodic orbits to calculate statistical quantities: in the case of analytic maps, these algorithms converge superexponentially in the order (i.e. maximum length of periodic orbits), but the number of periodic orbits that must be computed, and hence the computational cost, grows exponentially with the order \cite{Jenkinson17,Jenkinson05}. In terms of computational power $P$, which is the relevant quantity in practice, periodic orbit algorithms have error $\Or(e^{-k(\log P)^2})$ as opposed to the spectral method's convergence rate of $\Or(e^{-kP^{1/3}})$.

Consequently, the illustrative numerical bounds we obtain in Theorem \ref{t:rig} are well beyond the practicable capabilities of other numerical results. The previous best rigorous bound on the Lyapunov exponent found in the literature was $L_{exp} = 0.6575 \pm 0.0015$ obtained by Ulam's method in \cite{Galatolo14}; the diffusion coefficient was calculated in \cite{Bahsoun16} to less than one significant figure of accuracy and \cite{Jenkinson17}, which had an estimate correct to 55 significant figures, of which 17 were rigorously validated. For comparison, we use a comparable amount of computational power to obtain 127 and 123 validated significant figures respectively (see Section \ref{ss:rigorous} for further details).

Similarly, our adaptive algorithms provide a much higher degree of accuracy for practical uses than previous algorithms. This is of great use in broader study of chaotic dynamics, as having rigorous (or at least very reliable) algorithms at one's disposal allows one to easily explore mathematical phenomena for which analytical results may not yet exist. An example of such an endeavour is in \cite{Gottwald16}, where the authors make use of a Fourier spectral method to explore a particular rate of convergence in linear response theory, directing a subsequent proof. We hope that our spectral methods become a useful tool in theoretical and numerical study of chaotic systems.

There are several further directions for research. Numerical results indicate that the actual rates of convergence are slightly better than what we prove in this paper (see Section \ref{ss:adaptive}), and a different theoretical approach may yield the optimal convergence rates. While our paper is set in function spaces with bounded variation norms, most of our results are largely agnostic to the function space used. It would in particular be useful for justifying linear response estimates to prove convergence of the spectral method on appropriate scales of spaces. While the convergence of eigenvalues and eigenvectors was proven (without quantitative bounds) in \cite{Baladi99}, we also have some numerical evidence for convergence of dynamical determinant estimates. Our results may be extended to higher dimensions, possibly including maps with contracting directions.
\Ri{A significant problem with extending to higher dimensions, however, is that the number of basis functions necessary to compute estimates of a given accuracy will increase exponentially with the dimension: this may be remediable to a limited extent by using bases of smooth, compactly-supported wavelets \cite{Holschneider96, Baladi99}, which could lower the complexity as a result of their sparse structure.} Finally, by constructing efficient numerical inducing schemes, it seems likely that our methods can provide fast and accurate estimates of statistical properties for almost all major classes of one-dimensional chaotic maps, such as non-Markov expanding maps, intermittent maps and quadratic maps.

Our paper is structured as follows. In Section \ref{s:setuptheorems} we define the classes of maps pertinent to our results, and introduce the main theorems. In Section \ref{s:algorithms} we describe the algorithms we use that demonstrate the possiblities of transfer operator spectral methods, and in Section \ref{s:numerics} we give illustrative results from these algorithms. Finally, in Section \ref{s:proofs} we prove the theoretical results of the paper.

\section{Set-up and main theorems}\label{s:setuptheorems}

In this section, we summarise the paper's chief mathematical developments. We will first set up the problem, introducing the maps under consideration.
We then present the important results of the paper: Theorem \ref{t:solution} characterising an operator that explicitly solves many typical transfer operator problems; the main theorem, Theorem \ref{t:main}, which gives convergence of spectral operator estimates and Corollaries \ref{c:acim} and \ref{c:sum}, which give convergence of acims and other statistical properties as a result; and finally Theorems \ref{t:entry_p} and \ref{t:entry_np} bounding the magnitude of transfer operator spectral coefficient matrix entries $L_{jk}$, which are central to the proof of Theorem \ref{t:main}. We finally present two rigorously validated bounds on the Lyapunov exponent and a diffusion coefficient of the Lanford map, obtained via a rigorous implementation of our spectral method.

\subsection{Systems under consideration}\label{ss:setup}

We first introduce the two generic classes of maps we will consider; we will then introduce a set of so-called distortion conditions that maps from these classes may optionally hold, and which determine the spectral method's rates of convergence.

\subsubsection{Classes of maps}

We define two main classes of maps: \Rii{circle maps $\upd$ and interval maps $\unp$. Maps in $\upd$ are defined on the one-dimensional torus and must be continuous and differentiable on the whole domain, whereas maps in $\unp$ are defined on a (non-periodic) interval, and there is no requirement for any continuity or differentiability between branches of the map. For example, a Markovian tent map may lie in $\unp$, whereas maps in $\upd$ must have a derivative defined everywhere.}

A map $f: \Lambda \to \Lambda$ is in $\upd$ if it satisfies the following axioms:
\begin{itemize}
	\item Its domain $\Lambda$ is a \Rii{circle}, which we suppose to be canonically $\R / 2\pi\Z$.
	\item It is \Rii{\sout{piecewise}} $C^2$ with Lipschitz-bounded distortion, that is,
	\[ \sup_{x\in\Lambda} \frac{|f''(x)|}{|f'(x)|^3} < \infty.\]
	\item It is uniformly expanding, that is,
	\begin{equation} \lambda := \inf_{x\in\Lambda} |f'(x)| > 1. \label{ue_definition}\tag{E}\end{equation}
\end{itemize}
Maps in $\upd$ are circle maps, and can be extended to bijective lifts $\hat f: [0,2\pi] \to [0,2\beta\pi]$ for some $\beta \in \{2,3,4,\ldots\}$. We denote the inverse of $\hat f$ by $v$, and for consistency with the notation for $\unp$ define $v_\iota(x) := v(x + 2\iota\pi)$ for $x \in [0,2\pi]$ and $\iota\in I:= \{0,1,\ldots,\beta-1\}$.

A map $f: \Lambda \to \Lambda$ is in $\unp$ if it satisfies the following axioms:
\begin{itemize}
	\item Its domain $\Lambda$ is \Rii{an interval}, which we suppose to be the canonical interval for Chebyshev expansions $[-1,1]$.
	\item It is full-branch Markov: recall that this means there are open disjoint intervals $\O_\iota, \iota \in I$ whose union is of full measure in $\Lambda$ such that $f|_{\O_\iota}$ extends continuously to a bijective function $\hat f_\iota: \overline{\O_\iota} \to \Lambda$. 
	\item \Rii{These functions $\hat f_\iota$ are all $C^2$ and furthermore, the map $f$ has Lipschitz-bounded distortion: this is a standard regularity condition necessary for, among other things, a spectral gap in $BV$. We will find it useful to formulate the Lipschitz distortion condition in terms of $v_\iota := \hat f_\iota^{-1}$, as }
	\begin{equation} \sup_{x\in\Lambda, \iota \in I} \left|\frac{v_\iota''(x)}{v_\iota'(x)}\right| < \infty.\tag{$\DD_1$}\label{distortion_basic}\end{equation}
	\item It satisfies a {\it uniform C-expansion condition}\footnote{This condition can be reformulated as requiring $|(\cos^{-1}\circ f \circ \cos)'| \geq \check\lambda > 1$. }
		\begin{equation} \check\lambda = \inf_{x\in\cup_{\iota\in I} \O_\iota} \frac{\sqrt{1-x^2}}{\sqrt{1-f(x)^2}} |f'(x)| > 1. \tag{CE} \label{aue_definition} \end{equation}
	\item It satisfies a {\it partition spacing condition}
\begin{equation} \sup\left\{ \frac{|\O_\iota|}{d(\O_\iota,\partial \Lambda)} : d(\O_\iota,\partial \Lambda) > 0 \right\} = \Xi < \infty. \label{ppc}\tag{P}\end{equation}
\end{itemize}
The latter two conditions we introduce to control the high oscillatory behaviour of the spectral basis functions' \Rii{images under the action of the transfer operator} near the endpoints of the interval. They are not especially onerous conditions: \Rii{uniformly expanding maps typically satisfy (\ref{aue_definition})}, and a uniformly expanding map satisfying all conditions of $\unp$ except (\ref{aue_definition}) will have an iterate in $\unp$ (see Appendix \ref{a:expanding-conjugacy} for a discussion of C-expansion); $(\ref{ppc})$ is always satisfied for maps with finitely many branches.

We also consider maps that satisfy the conditions of $\upd$ (resp. $\unp$) except that the associated expansion parameter in $(\ref{ue_definition})$ (resp. (\ref{aue_definition})) need only be positive, rather than strictly greater than $1$. We denote the class of such maps $\bupd$ (resp. $\bunp$).


\subsubsection{Distortion conditions}

To obtain good convergence results we will optionally impose the following generalised distortion conditions on our maps.

The first set of distortion conditions are equivalent to uniform bounds on derivatives of the distortion $\log |v_\iota'|$. A map satisfies distortion condition (\ref{distortion_diff_frac}) for some $r \in \N^+$ if
\begin{equation} \label{distortion_diff_frac}  \sup_{\iota\in I,x\in\Lambda}\left|\frac{v_\iota^{(n+1)}(x)}{v_\iota'(x)}\right|= C_n < \infty,\ n = 1, \ldots, r. \tag{$\DD_r$} \end{equation}

The second set of distortion conditions are equivalent to uniform bounds on the first derivative of the distortion on a complex neighbourhood of the map's domain $\Lambda$. For circle maps, the neighbourhood is the closed complex strip
$\Lambda^\beta_\delta = \{x+iy \mid  x \in \R/2\beta\pi\Z, |y| \leq \delta\},$
for a given $\delta > 0$. For intervals, the neighbourhood is $\check{\Lambda}_\delta$, defined to be a Bernstein ellipse\footnote{A Bernstein ellipse of parameter $\rho>1$ is an ellipse in the complex plane centred at $0$ with semi-major axis of length $\frac{1}{2}(\rho + \rho^{-1})$ along the real line and semi-minor axis $\frac{1}{2}(\rho - \rho^{-1})$.} of parameter $e^{\delta}$. We assume that $v'$ and $v'_\iota$ respectively extend holomorphically to these sets.

A map satisfies (\ref{distortion_ana}) for some $\delta > 0$ if
 \begin{equation} 
 \begin{cases}\sup_{z\in\Lambda^\beta_\delta}\left|\frac{v''(z)}{v'(z)}\right| = C_{1,\delta} < \infty,& \Lambda = \R/2\pi\Z,\\
 \sup_{\iota\in I,z\in \check{\Lambda}_\delta}\left|\frac{v_\iota''(z)}{v_\iota'(z)}\right| = C_{1,\delta} < \infty,& \Lambda = [-1,1].
 \end{cases}
 \label{distortion_ana}\tag{$\DA_\delta$}\end{equation}
 
 We associate with each distortion condition a {\it spectral rate of convergence}. We formulate these rates of convergence $\kappa(\cdot)$ as function classes:
 \begin{align}
 \kappa\left(\DD_{r}\right) &= \left\{ x \mapsto C (1+x)^{-r} : C > 0\right\}, \label{spectral_rate_diff}\\ 
 \kappa\left(\DA_{\delta}\right) &= \left\{x \mapsto C e^{-\zeta x} : C > 0,\ \zeta \in (0,\delta] \right\}. \label{spectral_rate_ana}\end{align}

\subsection{Main results}\label{ss:theorems}


We can now formulate the main theoretical results of this paper, beginning by introducing a novel operator derived from the transfer operator that explicitly generates acims and other statistical properties. 

We define the {\it solution operator inverse} 
\begin{equation}\si = \id - \tro + u\ti\label{solution_operator_inverse}\end{equation}
and the {\it solution operator}
\begin{equation} \sch = \si^{-1} = (\id - \tro + u \ti)^{-1},\label{solution_operator}\end{equation}
where the functional $\ti$ is the \Rii{\sout{total}} Lebesgue integral on $\Lambda$ and $u$ is a \Rii{function in the domain of $\tro$} such that $\ti u = 1$. 

Many statistical properties can be computed using resolvent data of $\tro$ at its eigenvalue $1$: the solution operator inverse is a \Rii{bounded}, invertible perturbation of $\id - \tro$ which allows the resolvent data to be recovered.
For any transfer operator $\tro$ with a spectral gap (i.e. with a simple eigenvalue at $1$ and the remaining spectrum bounded inside a disk of radius less than $1$), the solution operator therefore solves for two important quantities, according to the following theorem:
\begin{theorem}\label{t:solution}
	Let $\tro: \spc \to \spc$ be a transfer operator with a spectral gap. Choose $u \in \spc$ with $\ti u = 1$.
	
	
	Then $\sch = (\id - \tro + u \ti)^{-1}$ is well-defined and bounded as an operator on $\spc$, and
	\begin{enumerate}[(a)]
		\item If $\ac$ is the unique acim with $\ti\ac=1$,
		\begin{equation} \ac = \sch u. \label{acim_solution} \end{equation}
		\item  For any $\phi \in \ker \ti$,
	\begin{equation} \sum_{n=0}^\infty \tro^n \phi = \sch \phi. \label{csum_solution} \end{equation}
		\end{enumerate}
\end{theorem}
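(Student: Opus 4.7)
\medskip
\noindent\textbf{Proof proposal.} The plan is to exploit the spectral gap to decompose $\spc$ into a $\tro$-invariant direct sum and to read off $\si$ in block form. Because $\tro$ is a transfer operator, integral is preserved: $\ti \tro = \ti$, i.e.\ $\ti$ is a left $1$-eigenvector of $\tro$. Paired with the right $1$-eigenvector $\ac$ (normalised so $\ti\ac=1$), this gives the spectral projection onto the $1$-eigenspace explicitly as $\Pi \phi = (\ti\phi)\,\ac$. Combined with the spectral gap hypothesis, we obtain a topological direct sum decomposition
\[ \spc = \K\ac \oplus \ker\ti, \]
which is $\tro$-invariant, and on which the restriction of $\tro$ to $\ker\ti$ has spectral radius strictly less than $1$. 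In particular, $(\id - \tro)$ is boundedly invertible on $\ker\ti$, with inverse given by the Neumann series $R:=\sum_{n\geq 0} \tro^n|_{\ker\ti}$.

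Next I would write $\si$ in block form with respect to this decomposition. Given $\phi \in \spc$, decompose it as $\phi = (\ti\phi)\,\ac + \phi_0$ with $\phi_0 \in \ker\ti$, and seek $\psi = a\,\ac + \psi_0$ solving $\si\psi = \phi$. Using $\tro\ac=\ac$, $\ti\psi_0=0$ and the decomposition $u = \ac + (u-\ac)$ with $u - \ac \in \ker\ti$, a direct computation yields
\[ \si\psi = a\,\ac + \bigl(a(u-\ac) + (\id-\tro)\psi_0\bigr). \]
Matching with the decomposition of $\phi$ forces $a = \ti\phi$ and $(\id-\tro)\psi_0 = \phi_0 - (\ti\phi)(u-\ac)$. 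Since the right-hand side lies in $\ker\ti$, this equation has the unique bounded solution $\psi_0 = R\bigl(\phi_0 - (\ti\phi)(u-\ac)\bigr)$. This proves that $\si$ is a bijection of $\spc$ and that its inverse is bounded (both $\Pi$ and $R$ are bounded), establishing the first claim of the theorem.

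Parts (a) and (b) now drop out of this formula. For (a), set $\phi = u$, so that $\ti\phi = 1$ and $\phi_0 = u - \ac$; then $\psi_0 = R(u-\ac - 1\cdot(u-\ac)) = 0$ and $a = 1$, giving $\sch u = \ac$. For (b), take $\phi \in \ker\ti$, so that $\ti\phi=0$ and $\phi_0=\phi$; then $a = 0$ and $\psi_0 = R\phi = \sum_{n\geq 0}\tro^n\phi$, as required.

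\medskip
\noindent\textbf{Main obstacle.} The only nontrivial input is justifying the direct sum decomposition $\spc = \K\ac \oplus \ker\ti$ and the convergence of the Neumann series $R$ on $\ker\ti$. Both follow from the spectral gap hypothesis via standard spectral projection arguments for bounded operators on Banach spaces, using the fact that $\ti$ is automatically a bounded functional on $\spc$ (as $\spc$ is assumed to embed in $L_1$) and is a left $1$-eigenvector of $\tro$; once this is in place, everything else is linear algebra.
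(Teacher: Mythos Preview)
Your proof is correct and follows essentially the same strategy as the paper: decompose $\spc$ as a direct sum with $\ker\ti$ as one summand, write $\si$ in block-triangular form, and invert using that $(\id-\tro)|_{\ker\ti}$ is invertible by the Neumann series. The only cosmetic difference is that the paper takes $\spn\{u\}$ as the complement to $\ker\ti$ (so the block form is read off without first invoking $\ac$), whereas you take $\spn\{\ac\}$; this makes your computation of $\si\psi$ slightly cleaner (since $\tro\ac=\ac$) at the cost of using the existence of $\ac$ up front, but the two arguments are otherwise interchangeable.
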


\begin{remark}\label{r:variance}
	As a  result of Theorem \ref{t:solution}, many important statistical quantities can be simply expressed using the solution operator and acim.
	For example, the Green-Kubo formula for diffusion coefficients given in (\ref{birkhoff_variance_sum}) can be rewritten using Theorem \ref{t:solution}(b) as
	\begin{equation} \V_f(A) =  \int_\Lambda A\ (2\sch-\id)(\id - \rho \ti)(\rho A)\,dx. \label{birkhoff_variance}\end{equation}
	This closed formula enables effective rigorous calculation of diffusion coefficients.
\end{remark}


We now provide some notation to enable us to state the main theorem, which proves the convergence of the spectral methods. We define the finite-dimensional subspaces $(E_N)_{N\in\N^+}$
\[ E_N = \begin{cases} \spn\{e_{-N},\ldots,e_N \}, \Lambda = [0,2\pi) \\ \spn\{T_0,\ldots,T_N\}, \Lambda = [-1,1]
\end{cases}\]
and the orthogonal projections $\prj_N$ onto the $E_N$ in the \Rii{$L^2$ space in which the bases are orthogonal ($L^2([0,2\pi])$ for the Fourier basis, and $L^2([-1,1],(1-x^2)^{-1/2})$ for the Chebyshev basis)}. We also define the spectral Galerkin operator discretisations
\begin{equation} \tro_N = \prj_N \tro |_{\spc_N}\label{transfer_operator_finite}\end{equation}
and
\begin{equation} \sch_N := \si_N^{-1} := (\id - \tro_N + u \ti|_{\spc_N})^{-1},\label{solution_operator_finite}\end{equation}
where the function $u$ is taken to be in $E_N$. (A typical choice of $u$ is $u = 1/|\Lambda|$.)


Our main theorem can then be formulated as follows:
\begin{theorem}\label{t:main}
	Suppose $f \in \upd$ or $\unp$, and satisfies a distortion bound 
	$($\ref{distortion_diff_frac}$)$ (resp. $($\ref{distortion_ana}$)$).
	Then there exist functions $K,\bar K \in 
	\kappa(\DD_r)$ (resp. $\kappa(\DA_\delta)$)
	such that for sufficiently large $N$ and all $\phi \in E_N$,
	\[ \| \tro_N \phi - \tro \phi \|_{BV} < N\sqrt{N} K(N) \| \phi\|_{BV}, \]
	and
			\[ \| \sch_N \phi - \sch \phi \|_{BV} < N\sqrt{N} \bar K(N) \| \phi\|_{BV}. \]
\end{theorem}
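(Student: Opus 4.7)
The plan is to derive the first bound directly from the matrix entry estimates of Theorems \ref{t:entry_p} and \ref{t:entry_np}, and then bootstrap it to the second via a Galerkin resolvent identity.

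For the first bound, I would observe that for $\phi \in E_N$ one has $\tro_N\phi - \tro\phi = -(\id - \prj_N)\tro\phi$, so the problem reduces to controlling the BV norm of the high-frequency tail of $\tro\phi$. In the spectral basis this tail has coefficients $(\tro\phi)_k = \sum_{|j|\leq N} L_{kj}\hat\phi_j$ for $|k| > N$, and these are directly dominated by the entry bounds. The BV norm of an element in the range of $\id - \prj_N$ is, in turn, controlled by a weighted $\ell^2$ sum of its coefficients via Bernstein-type inequalities on spectral polynomials. I expect the factor $N\sqrt{N}$ to emerge in two pieces: one factor of $N$ from the derivative implicit in the BV norm, which weights the $k$-th coefficient by $|k|$; and a factor $\sqrt{N}$ from a Cauchy--Schwarz step across the at most $2N+1$ active indices of $\phi$ when converting coefficient-wise bounds on $\hat\phi_j$ into $\|\phi\|_{BV}$. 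Composing these losses with the entry-decay class $\kappa(\DD_r)$ or $\kappa(\DA_\delta)$ furnishes the rate function $K$ in the required class.

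For the second bound I would use a Galerkin resolvent identity. Because $u \in E_N$, one has $(\si - \si_N)\psi = (\tro_N - \tro)\psi$ for $\psi \in E_N$, and a short algebraic manipulation yields, for $\phi \in E_N$,
\begin{equation*}
\sch_N\phi - \sch\phi \;=\; \si_N^{-1}\prj_N\si(\id - \prj_N)\sch\phi \;-\; (\id - \prj_N)\sch\phi.
\end{equation*}
The right-hand side is estimated by combining the first bound with the regularity of $\sch\phi$: since $\sch$ is bounded on $BV$ (by the spectral gap of $\tro$ and Theorem \ref{t:solution}) and, more importantly, since the same distortion conditions underlying Theorems \ref{t:entry_p}--\ref{t:entry_np} force spectral-coefficient decay of $\sch\phi$ at the rates in $\kappa(\DD_r)$ or $\kappa(\DA_\delta)$, both $\|(\id-\prj_N)\sch\phi\|_{BV}$ and $\|\prj_N\si(\id-\prj_N)\sch\phi\|_{BV}$ decay at the claimed speed.

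The principal technical obstacle will be the uniform-in-$N$ boundedness of $\sch_N = \si_N^{-1}$ as an operator on $E_N \subset BV$. This is a perturbation-stability question of Keller--Liverani type: the spectral gap of $\tro$ on $BV$ must be inherited by the discretisations $\tro_N$, with resolvent bounds that do not blow up with $N$. Its proof will rely on precisely the first bound, which provides operator-norm smallness of $\tro_N - \tro$ on $E_N$, together with uniform Lasota--Yorke inequalities for $\tro_N$ that follow from the corresponding inequalities for $\tro$ and the boundedness of $\prj_N$ on the seminorms defining $BV$. Once this uniform resolvent bound is in place, the displayed identity delivers the claimed estimate with $\bar K \in \kappa(\DD_r)$ (resp.\ $\kappa(\DA_\delta)$), obtained by multiplying $K$ by $\sup_N \|\sch_N\|_{BV\to BV}$ and absorbing universal constants.
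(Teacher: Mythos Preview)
Your treatment of the first bound is essentially the paper's: both reduce to bounding $\|\E_N\|_{BV}$ where $\E_N=(\id-\prj_N)\tro\prj_N$, convert the $BV$ norm to weighted $\ell^2$ norms of the coefficient matrix (Lemma~\ref{l:bvnorm}), and feed in the entry bounds of Theorems~\ref{t:entry_p}--\ref{t:entry_np}. The $N$ from the derivative weight and the $\sqrt{N}$ from summing over $2N{+}1$ columns are exactly how the paper's factor $N\sqrt N$ arises.

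For the second bound your route diverges from the paper's, and it carries a soft spot. The paper does \emph{not} use a Galerkin resolvent identity, Keller--Liverani stability, or any independent regularity of $\sch\phi$. Instead it observes that $\tilde\tro_N:=\tro-\E_N$ is block upper-triangular with respect to $E_N\oplus\ker\prj_N$, so $\tilde\sch_N:=(\id-\tilde\tro_N+u\ti)^{-1}$ has top-left block exactly $\sch_N$. Since $\tilde\sch_N=(\id+\sch\E_N)^{-1}\sch$, a Neumann series gives
\[
\|\sch_N-\sch|_{E_N}\|_{BV}\le\|\tilde\sch_N-\sch\|_{BV}\le\frac{\|\sch\|_{BV}\,\|\E_N\|_{BV}}{1-\|\sch\|_{BV}\,\|\E_N\|_{BV}},
\]
which needs only the first bound and the boundedness of $\sch$ from Theorem~\ref{t:solution}. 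In particular the uniform-in-$N$ control of $\|\sch_N\|$ drops out for free, with no Lasota--Yorke inequalities for the discretisations and no Keller--Liverani machinery.

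The weak point in your plan is the assertion that ``the same distortion conditions \ldots\ force spectral-coefficient decay of $\sch\phi$ at the rates in $\kappa(\DD_r)$ or $\kappa(\DA_\delta)$''. Theorems~\ref{t:entry_p}--\ref{t:entry_np} bound $L_{jk}$ only in the cones $j/k>p_1$ or $j/k<p_2$ (the complement is merely uniformly bounded, cf.\ Remark~\ref{r:entry_unif}), and the first bound concerns $\tro$ restricted to $E_N$. To control $\|(\id-\prj_N)\sch\phi\|_{BV}$ you would need decay of the tail of $\tro\psi$ for a general $\psi=\sch\phi\in BV$, which requires combining the cone estimates with coefficient decay of $\psi$ coming only from $\psi\in BV$; this can be made to work, but it is genuinely extra bookkeeping that the paper's block-triangular argument avoids entirely. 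Your Keller--Liverani step is also unnecessary: once $\|\E_N\|_{BV}<\|\sch\|_{BV}^{-1}$, the identity $\si\psi=\si_N\psi-\E_N\psi$ for $\psi\in E_N$ already gives $\|\sch_N\|_{BV}\le(\|\sch\|_{BV}^{-1}-\|\E_N\|_{BV})^{-1}$, which is precisely what the paper's Neumann bound encodes.
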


For ease of expression, in the rest of this section we use the notation ($\D$) to denote either of (\ref{distortion_diff_frac}) or (\ref{distortion_ana}). 

Theorem \ref{t:main} together with Theorem \ref{t:solution} directly implies the convergence of estimates of statistical quantities.
In particular, the following corollary gives spectral convergence of the acim.
\begin{corollary}\label{c:acim}
	Suppose $f\in\upd$ or $\unp$, and satisfies a distortion bound $(\D)$. 
	
	
	Let $\ac_N = \sch_N u$.
	Then there exists $K \in \kappa(\D)$ such that for all $N$ sufficiently large
	\[ \| \ac_N - \ac \|_{BV} < N\sqrt{N} K(N). \]
\end{corollary}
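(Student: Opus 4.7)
The plan is to derive Corollary~\ref{c:acim} as an immediate consequence of Theorems~\ref{t:solution}(a) and~\ref{t:main}, which between them already contain all the substantive work.

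First I would invoke Theorem~\ref{t:solution}(a) to write the exact acim as $\ac = \sch u$. To be entitled to do so, $u$ must be chosen in $E_N$ with $\ti u = 1$; the canonical choice $u = 1/|\Lambda|$ is a constant function and hence lies in every $E_N$ with $\ti u = 1$, so this is harmless and can be fixed once and for all. With the same $u$, the definition of $\ac_N$ reads $\ac_N = \sch_N u$, so
\[ \ac_N - \ac = \sch_N u - \sch u. \]

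Since $u \in E_N$, the second bound in Theorem~\ref{t:main} applies directly with $\phi = u$, yielding
\[ \|\ac_N - \ac\|_{BV} < N\sqrt{N}\, \bar K(N)\, \|u\|_{BV} \]
for some $\bar K \in \kappa(\D)$ and all $N$ sufficiently large. The quantity $\|u\|_{BV}$ is a fixed finite constant (for $u = 1/|\Lambda|$ it is just $1/|\Lambda|$), and both classes $\kappa(\DD_r)$ and $\kappa(\DA_\delta)$ defined in (\ref{spectral_rate_diff})--(\ref{spectral_rate_ana}) are closed under multiplication by positive constants. Setting $K(N) := \|u\|_{BV}\, \bar K(N)$ therefore produces an element of $\kappa(\D)$ that delivers the stated estimate.

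There is no real obstacle in this corollary: all the technical effort is already packaged inside Theorems~\ref{t:solution} and~\ref{t:main}. The only minor verifications are that the chosen $u$ indeed lies in every $E_N$ and satisfies $\ti u = 1$, and that $\kappa(\D)$ is stable under rescaling by a positive constant; both are immediate from the definitions.
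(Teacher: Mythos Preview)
Your proposal is correct and follows essentially the same approach as the paper's own proof: invoke Theorem~\ref{t:solution}(a) to identify $\ac = \sch u$, note that $u \in E_N$, and apply the second bound of Theorem~\ref{t:main} with $\phi = u$, absorbing $\|u\|_{BV}$ into the constant. Your additional remarks about the choice $u = 1/|\Lambda|$ and the closure of $\kappa(\D)$ under positive rescaling are slightly more explicit than the paper's version but change nothing substantively.
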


The next corollary gives strong convergence of $\sum_{n=1}^\infty \tro^n$, and consequently many important statistical estimates (see (\ref{birkhoff_variance}) for an example).

\begin{corollary}\label{c:sum}
	Suppose $f\in\upd$ or $\unp$, and satisfies a distortion bound $(\D)$. 
	 Then there exists $K\in \kappa(\D)$ such that for $N$ large enough and all $\phi \in E_N \cap \ker\ti$, 
	\[ \left\| \sch_N \phi - \sum_{n=0}^\infty \tro^n \phi \right\|_{BV} < N\sqrt{N} K(N) \| \phi\|_{BV}. \]
\end{corollary}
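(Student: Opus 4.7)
The plan is essentially a one-line assembly of results already stated in the excerpt. By Theorem \ref{t:solution}(b), for any $\phi \in \ker\ti$ one has $\sum_{n=0}^\infty \tro^n \phi = \sch \phi$, provided the transfer operator $\tro$ has a spectral gap on the space $\spc$. So I would first record that the maps in $\upd$ and $\unp$ do enjoy a spectral gap on $BV$ — this is classical Lasota--Yorke theory for piecewise $C^2$ uniformly expanding Markov maps, and indeed the paper has already committed to $\spc = BV$ precisely for this reason. Consequently Theorem \ref{t:solution} applies and $\sch$ is a bounded operator on $BV$ with $\sch\phi = \sum_{n=0}^\infty \tro^n \phi$ whenever $\ti\phi = 0$.

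Next, substituting this identity into the quantity we want to bound gives
\[
\left\| \sch_N \phi - \sum_{n=0}^\infty \tro^n \phi \right\|_{BV} = \| \sch_N \phi - \sch \phi \|_{BV},
\]
and the right-hand side is controlled by the second conclusion of Theorem \ref{t:main}: there exists $\bar K \in \kappa(\D)$ such that for all $N$ sufficiently large and all $\phi \in E_N$,
\[
\| \sch_N \phi - \sch \phi \|_{BV} < N\sqrt{N}\, \bar K(N)\, \|\phi\|_{BV}.
\]
Since $E_N \cap \ker\ti \subseteq E_N$, this bound applies to the $\phi$ we are considering, and we take $K := \bar K \in \kappa(\D)$. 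That finishes the proof.

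There is essentially no obstacle: the corollary is a packaging of Theorem \ref{t:solution}(b) and the second half of Theorem \ref{t:main}. The only point one has to be a little careful about is confirming the spectral-gap hypothesis of Theorem \ref{t:solution} on $BV$ for the classes $\upd$ and $\unp$ (including the countably-branched case for $\unp$), which is standard but should be cited explicitly so that the invocation of Theorem \ref{t:solution}(b) is justified.
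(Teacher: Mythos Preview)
Your proposal is correct and follows exactly the same approach as the paper: invoke Theorem \ref{t:solution}(b) to rewrite $\sum_{n=0}^\infty \tro^n\phi$ as $\sch\phi$ on $\ker\ti$, then apply the second conclusion of Theorem \ref{t:main}. The spectral-gap hypothesis you flag is established in the paper within the proof of Theorem \ref{t:main} itself, so no further citation is needed.
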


Since the operators $\tro_N$ and $\sch_N$ are endomorphisms on $\spc_N$, Theorem \ref{t:main} and Corollary \ref{c:sum} show that the spectral method converges in operator norm within $\spc_N$. When attempting to estimate, for example, $\sch \phi$ for some $\phi \notin \spc_N$, one can simply substitute $\phi$ for its spectral discretisation $\prj_N \phi$, and propagate  through the calculation the error arising from this substitution.

Critical to proving Theorem \ref{t:main} are the following bounds on the entries $L_{jk}$ of the transfer operator matrix. We state two analogous theorems for transfer operators on periodic and non-periodic domains: the situation is illustrated in Figure \ref{f:theorems}. Abstractly, these results reformulate the characterisation of the transfer operator of a uniformly-expanding map as the sum of a strictly upper-triangular operator and a compact operator developed by \cite{Holschneider96,Baladi99} in the context of $C^\infty$ circle maps in wavelet bases. The important development of our approach is the large amount of quantitative information generated, which allows us to prove convergence rates and provide rigorous concrete bounds for specific maps. 

\begin{theorem}\label{t:entry_p}
	Suppose $f$ is in the class $\bupd$  satisfying some distortion bound $(\D)$, with $\lambda_1\leq f'\leq \lambda_2$. Suppose $L$ is the matrix representing the transfer operator of $f$ in a Fourier exponential basis.

	Then for every $p_1 > \lambda_1^{-1}$ and $p_2 <\lambda_2^{-1}$ there exists $K \in \kappa(\D)$ such that for $j/k > p_1$ or $k = 0$,
	$$ |L_{jk}| \leq K(|j - p_1 k|), $$
	and for $j/k < p_2$ or $k=0$,
	$$ |L_{jk}| \leq K(|j - p_2 k|). $$
	
\end{theorem}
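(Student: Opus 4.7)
The strategy is to recognise $L_{jk}$ as an oscillatory integral whose phase has non-vanishing derivative in the prescribed regimes, and then extract decay either by iterated integration by parts (for $\DD_r$) or by a contour shift (for $\DA_\delta$). Starting from $\tro c_k(x) = \sum_\iota v_\iota'(x)\,e^{ikv_\iota(x)}$ and changing variables via the lift $\hat{f} = v^{-1}$, one obtains the single integral
\[ L_{jk} = \frac{1}{2\pi}\int_0^{2\pi} e^{i\Phi(u)}\,du, \qquad \Phi(u) := ku - j\hat{f}(u). \]
Since $\hat{f}' \geq \lambda_1$, in the regime $j/k > p_1 > \lambda_1^{-1}$ one has $|\Phi'(u)| = j\hat{f}'(u) - k \geq \lambda_1(j - p_1 k) + (\lambda_1 p_1 - 1)k$, so $|\Phi'(u)| \geq \lambda_1(j - p_1 k)$ and also $|\Phi'(u)| \geq (\lambda_1 - 1/p_1)\,j$; the latter, though weaker for large $k$, is crucial for what follows. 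The regime $j/k < p_2 < \lambda_2^{-1}$ is symmetric with $|\Phi'| \gtrsim k \gtrsim |j - p_2 k|$, and the $k=0$ case is immediate.

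For the differentiable hypothesis $(\DD_r)$, integrate by parts $r$ times. The integrand $e^{i\Phi}$ is $2\pi$-periodic because $\Phi(2\pi) - \Phi(0) = 2\pi(k - j\beta) \in 2\pi\Z$, and $\hat{f}'$ is genuinely $2\pi$-periodic, so no boundary terms arise in iterating the formal adjoint $L^* g = -\frac{d}{du}(g/(i\Phi'))$ of the differential operator fixing $e^{i\Phi}$. Thus $L_{jk} = (2\pi)^{-1}\int_0^{2\pi}(L^*)^r 1 \cdot e^{i\Phi}\,du$. A Faà di Bruno-style expansion writes $(L^*)^r 1$ as a sum of products of derivatives $\hat{f}^{(m)}$ divided by powers of $\Phi'$: each differentiation of $(\Phi')^{-1}$ produces a factor $\Phi^{(n+1)} = -j\hat{f}^{(n+1)}$, bounded (via $(\DD_r)$ together with the chain-rule relation between derivatives of $\hat{f}$ and of its inverse $v$) by a distortion constant times a power of $\hat{f}' \leq \lambda_2$, while simultaneously introducing another $1/\Phi'$. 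Using $|\Phi'|^{-1} \lesssim 1/j$, each step contributes a factor $O(1/j)$; after $r$ steps $\|(L^*)^r 1\|_\infty \lesssim j^{-r}$. Since $j \geq j - p_1 k > 0$ in this regime, this gives $|L_{jk}| \lesssim (j - p_1 k)^{-r}$, yielding a member of $\kappa(\DD_r)$.

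For the analytic hypothesis $(\DA_\delta)$, work with the equivalent lifted representation $L_{jk} = (2\pi)^{-1}\int_0^{2\beta\pi} v'(y)\,e^{i(kv(y) - jy)}\,dy$. The integrand is $2\beta\pi$-periodic by virtue of $v(y+2\beta\pi) = v(y) + 2\pi$ and $k\in\Z$, and $v, v'$ are holomorphic on $\Lambda^\beta_\delta$, so Cauchy's theorem allows replacing the contour by $[0,2\beta\pi] - i\eta_0$ for any $\eta_0 \in (0,\delta)$. On the shifted contour,
\[ \Im\bigl(kv(y - i\eta_0) - j(y - i\eta_0)\bigr) = \int_0^{\eta_0}\bigl(j - k\,\Re v'(y - it)\bigr)\,dt. \]
Because $p_1 > \lambda_1^{-1} \geq v'|_\R$ and $v'$ is continuous, one may choose $\eta_0$ small enough that $\Re v'(y-it) \leq p_1$ uniformly on the strip $[0,2\beta\pi] \times [-\eta_0, 0]$, yielding the pointwise lower bound $\eta_0(j - p_1 k)$. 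Combined with the uniform bound on $|v'|$ across the strip this produces $|L_{jk}| \leq C\,e^{-\eta_0(j - p_1 k)}$, i.e.\ a member of $\kappa(\DA_\delta)$ with $\zeta = \eta_0 \in (0,\delta]$. The $p_2$-regime is treated by shifting the contour in the opposite direction.

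The main obstacle is the combinatorial bookkeeping in the differentiable case: one must verify inductively that every monomial in the Faà di Bruno expansion of $(L^*)^r 1$ has each accumulated factor of $j$ (from differentiating $\Phi' = k - j\hat{f}'$) properly absorbed by a matching $|\Phi'|^{-1} \lesssim 1/j$, leaving a net $O(1/j^r)$ bound whose constants depend only on $C_1,\ldots,C_r$, $\lambda_1, \lambda_2$ and $p_1$. The analytic case is, by comparison, mild: the only care required is to ensure $\Re v'$ remains below $p_1$ on the shifted contour, which can always be arranged by continuity, though at the price of possibly shrinking the decay rate $\zeta$ relative to $\delta$.
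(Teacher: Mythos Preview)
Your proof is correct and follows the same overall strategy as the paper: express $L_{jk}$ as an oscillatory integral, then extract decay by iterated integration by parts in the $\DD_r$ case and by a contour shift in the $\DA_\delta$ case.

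The organisational difference is worth noting. The paper works in the $v$-parametrisation, writing
\[
L_{jk}=\frac{1}{2\pi}\int_0^{2\beta\pi} v'(y)\,e^{i(kv(y)-jy)}\,dy,
\]
i.e.\ as a weighted oscillatory integral with weight $h=v'$, and proves a general lemma (Lemma~\ref{l:abstract_entry}) for such operators with arbitrary weight $h$; the theorem is then obtained by checking that $h=v'$ satisfies the lemma's hypotheses. That general lemma is reused for the Chebyshev case, where the weight is no longer $v'$. You instead change variables $y=\hat f(u)$, which makes the Jacobian cancel the weight and leaves the unweighted integral $\frac{1}{2\pi}\int_0^{2\pi}e^{i(ku-j\hat f(u))}\,du$. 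This is a clean simplification for the Fourier case specifically, and your bookkeeping for $(L^*)^r 1$ (each factor $a^{(m)}$ is $O(1/|j|)$ because the $j$ coming from $\Phi^{(n+1)}=-j\hat f^{(n+1)}$ is cancelled by an extra $1/\Phi'\lesssim 1/|j|$) is correct and slightly more direct than the paper's $\sum_n W_{r,n}|k|^n/d(j,k\tilde\mu)^{n+r}$ decomposition. The paper's intermediate bound decays in $|j-\lambda_1^{-1}k|$ rather than in $|j|$, which is sharper near the edge of the cone, but both collapse to the same stated bound $K(|j-p_1 k|)$. For the analytic case your contour-shift argument and the paper's are essentially identical; the paper quantifies the admissible shift via a Taylor bound on $v''$ while you invoke continuity of $\Re v'$, which suffices for the existence statement.
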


\begin{figure}[htb]
	\centering
	\includegraphics[width=122mm
	]{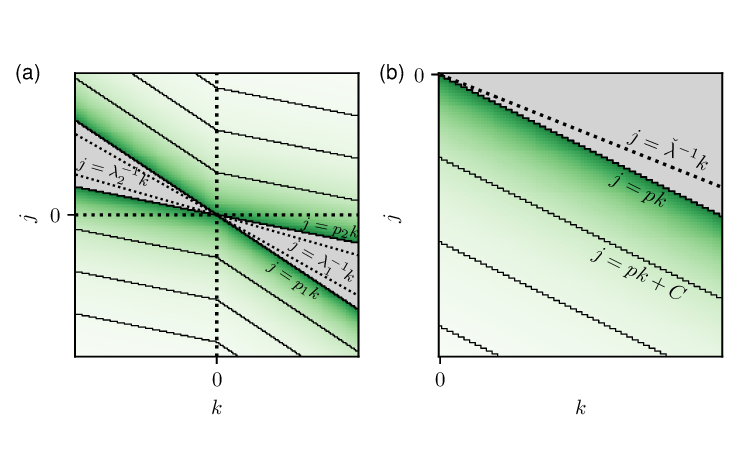}
	\caption{Heatmaps of maximum possible magnitudes of coefficients $L_{jk}$ of transfer operator matrix of a system described in: (a) Theorem \ref{t:entry_p}; (b) Theorem \ref{t:entry_np}. Shown are contours of constant magnitude of coefficients (thin black lines) and, in grey block colour, coefficients not characterised by the theorems.
		Note that because the full Fourier and Chebyshev bases are indexed by $\Z$ and $\N$ respectively, the matrix indices range over these values.} 
	\label{f:theorems}
\end{figure}


\begin{theorem}\label{t:entry_np}
	Suppose $f$ is in the class $\bunp$,  satisfying some distortion bound $(\D)$.  Suppose $L$ is the matrix representing the transfer operator of $f$ in a Fourier exponential basis. 
	
	Then for every $p>\check{\lambda}$ there exists $K \in \kappa(\D)$ such that for $j/k > p$ or $k = 0$,
	$$ |L_{jk}| \leq K( |j - p k|). $$
\end{theorem}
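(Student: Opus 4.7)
The plan is to reduce Theorem \ref{t:entry_np} to an oscillatory integral of Fourier type and then apply the same machinery that underlies Theorem \ref{t:entry_p}. The reduction rests on the conjugacy $\cos\colon[0,\pi]\to[-1,1]$, which turns Chebyshev polynomials into Fourier cosines and turns $f$ into the uniformly expanding map $\tilde f := \cos\circ f\circ \cos^{-1}$, whose expansion rate is $\check\lambda$ by (\ref{aue_definition}). Concretely, using $x=\cos\theta$ and $T_k(\cos\theta)=\cos(k\theta)$ one writes
\[ L_{jk} \;=\; \frac{c_j}{\pi}\int_0^\pi \cos(j\theta)\sum_{\iota} v_\iota'(\cos\theta)\,\cos\bigl(k\,w_\iota(\theta)\bigr)\,d\theta, \]
where $w_\iota(\theta) := \arccos v_\iota(\cos\theta)$ is the $\iota$th inverse branch of $\tilde f$ on $[0,\pi]$. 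After symmetric extension to $(-\pi,\pi]$, splitting both cosines into complex exponentials, and changing variable from $\theta$ to $\psi=w_\iota(\theta)$ via the identity $v_\iota'(\cos\theta) = w_\iota'(\theta)\sin w_\iota(\theta)/\sin\theta$, each matrix entry becomes a finite sum (over branches and the four sign choices) of integrals of the form
\[ I_{\pm,\pm}(j,k) \;=\; \int \frac{\sin\psi}{\sin \tilde f(\psi)}\,e^{i\phi(\psi)}\,d\psi, \qquad \phi(\psi) \;=\; \pm_1 j\,\tilde f(\psi)\pm_2 k\,\psi. \]

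Under the hypothesis $j/k > p > \check\lambda$, the phase derivative $|\phi'(\psi)| = |j\,\tilde f'(\psi)\pm k|$ is bounded below by a positive constant multiple of $|j-pk|$, uniformly in $\psi$. As in the proof of Theorem \ref{t:entry_p}, repeated integration by parts (in the case $(\D)=(\DD_r)$) or deformation of the contour into a complex strip whose width is controlled by $\delta$ (in the case $(\D)=(\DA_\delta)$) then produces decay in $|j-pk|$ at the spectral rate $\kappa(\D)$. The higher derivatives of $\phi$ that enter the remainder terms transfer via the chain rule to the distortion of $\tilde f$, which is in turn controlled by the distortion of $f$ together with the smooth conjugacy $\cos$; this is a routine calculation parallel to the one in the periodic case.

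The genuinely new obstacle, absent in the periodic case, is the amplitude $a(\psi) := \sin\psi/\sin\tilde f(\psi)$. Its numerator vanishes only at $\psi\in\{0,\pi\}$, whereas its denominator vanishes at every $\psi$ for which $\tilde f(\psi)\in\{0,\pi\}$, that is, at every endpoint of a branch of $\tilde f$ mapping to $\pm 1$. On any single branch, the relevant $\psi$-interval has at most one such singular endpoint: if that endpoint is $0$ or $\pi$ itself, the numerator $\sin\psi$ cancels the singularity and the ratio is bounded uniformly by the C-expansion condition; otherwise, the partition spacing condition (\ref{ppc}) forces the singularity of $a$ to lie at distance at least comparable to the branch length from the opposite global endpoint, so that $a$ and the finitely many derivatives of $a$ needed for integration by parts remain uniformly bounded in terms of $\Xi$ and $\check\lambda$. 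In the analytic case, (\ref{ppc}) together with (\ref{distortion_ana}) selects an admissible complex contour on which $a$ is holomorphic and uniformly bounded. Summing the branch contributions and controlling the sum over $\iota$ via $(\D)$, as in the proof of Theorem \ref{t:entry_p}, yields the claimed bound $|L_{jk}|\leq K(|j-pk|)$ with $K\in\kappa(\D)$. The main difficulty in executing this plan is the careful bookkeeping of the singularity cancellations of $a$; the oscillatory integral estimates themselves mirror those of the periodic case.
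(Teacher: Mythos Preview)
Your reduction to oscillatory integrals via the cosine conjugacy is the right first step and matches the paper. The divergence comes at your change of variable $\psi = w_\iota(\theta)$: the paper does \emph{not} make this substitution but instead applies Lemma~\ref{l:abstract_entry} directly to the $\theta$-integral
\[ \int_0^{2\pi\beta_{\iota'}} h_{\iota'}(\theta)\, e^{i(k\upsilon_{\iota'}(\theta) - j\theta)}\, d\theta, \qquad h_{\iota'} = v_\iota'\circ\cos,\quad \upsilon_{\iota'} = \cos^{-1}\circ v_\iota\circ\cos. \]
In this formulation the amplitude is smooth and bounded, the phase function $\upsilon_{\iota'}$ has $|\upsilon_{\iota'}'|\leq\check\lambda^{-1}$ with uniformly bounded higher derivatives (this is Lemma~\ref{l:cheby_distortion}, a substantial appendix computation that uses (\ref{ppc})), and periodicity kills the boundary terms in integration by parts. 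The only delicate point is that $\upsilon_{\iota+}$ may fail to be $2\pi$-periodic when exactly one of $v_\iota(\pm1)$ lands in $\{\pm1\}$; the paper handles this by concatenating $\upsilon_{\iota+}$ and $\upsilon_{\iota-}$ into a single $4\pi$-periodic function (the $I_c$, $I'$, $\beta_{\iota'}$ construction).

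Your $\psi$-parametrisation creates difficulties you do not resolve. First, since $f$ is full-branch, \emph{both} endpoints of every branch interval of $\tilde f$ map to $\{0,\pi\}$, so for an interior branch the amplitude $a(\psi)=\sin\psi/\sin\tilde f(\psi)$ is singular at both endpoints, not ``at most one''; indeed $\tilde f(\psi)\sim C\sqrt{\psi-\alpha}$ near an interior endpoint $\alpha$, so $a$ has $(\psi-\alpha)^{-1/2}$ singularities and $\tilde f'$ blows up there. Second, integration by parts on these non-periodic branch intervals produces boundary terms you never address; Lemma~\ref{l:abstract_entry} relies essentially on periodicity for these to vanish. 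Third, the remainder terms in your setting involve $\tilde f^{(n)}$, which is not uniformly bounded (only the distortion $\tilde f''/\tilde f'^2$ is), and the required cancellations between the large amplitude and the large phase derivative are delicate rather than the ``routine calculation parallel to the periodic case'' you claim. The upshot: stay in the $\theta$-variable, where everything is smooth and periodic, and invest the effort instead in transferring the distortion bounds from $v_\iota$ to $\upsilon_\iota$ and $h_\iota$---which is precisely where the paper's work goes.
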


\begin{remark}
	One can prove similar results for transfer operators with general weights (c.f. (\ref{transfer_action})):
	\begin{equation} (\tro_g \phi)(x) = \sum_{f(y) = x} g(y) \phi(y). \label{baladi_transfer}\end{equation}
	This class of operator includes transfer operators and composition operators $C_v: \phi \mapsto \phi \circ v$.
\end{remark}

From the previous results, we are able to prove extremely accurate rigorous bounds on maps in $\upd$ and $\unp$ satisfying sufficiently strong distortion conditions. In particular, we prove the following bound on the Lanford map:
\begin{theorem}\label{t:rig}
	Consider the Lanford map $f: [0,1]\to[0,1]$, $f(x) = 2x + \frac{1}{2} x (1-x) \mod 1$.
	\begin{enumerate}[(a)]
		\item The Lanford map's Lyapunov exponent $L_{exp} := \int_{\Lambda} \log |f'|\, \rho\, dx$ lies in the range
		\begin{equation*}\begin{split} L_{exp} 
		&= 0.657\ 661\ 780\ 006\ 597\ 677\ 541\ 582\ 413\ 823\ 832\ 065\ 743\ 241\ 069
		\\&\qquad 580\ 012\ 201\ 953\ 952\ 802\ 691\ 632\ 666\ 111\ 554\ 023\ 759\ 556\ 459
		\\&\qquad 752\ 915\ 174\ 829\ 642\ 156\ 331\ 798\ 026\ 301\ 488\ 594\ 89 \pm 2 \times 10^{-128}.\end{split}\end{equation*}
		
		\item The diffusion coefficient for the Lanford map with observable $\phi(x) = x^2$ lies in the range
		\begin{equation*}\begin{split} \V_f(\phi) 
		&= 0.360\ 109\ 486\ 199\ 160\ 672\ 898\ 824\ 186\ 828\ 576\ 749\ 241\ 669\ 997
		\\&\qquad 797\ 228\ 864\ 358\ 977\ 865\ 838\ 174\ 403\ 103\ 617\ 477\ 981\ 402\ 783
		\\&\qquad 211\ 083\ 646\ 769\ 039\ 410\ 848\ 031\ 999\ 960\ 664\ 7 \pm 6 \times 10^{-124}.\end{split}\end{equation*}
	\end{enumerate}
\end{theorem}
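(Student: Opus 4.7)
The plan is to identify the Lanford map as a map in $\upd$ (after rescaling to the canonical periodic interval) satisfying an analytic distortion condition $(\DA_\delta)$ for an explicit $\delta>0$, and then run the rigorously validated algorithm of Section \ref{s:algorithms} in extended-precision interval arithmetic. Because $\kappa(\DA_\delta)$ consists of exponentially decaying functions, the spectral-truncation error falls below $10^{-128}$ for $N$ of order a few hundred; the technical work is to control the arithmetic and operator-inversion errors tightly enough that the final enclosure matches this truncation estimate.

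Writing $f(x)=\tfrac{5}{2}x-\tfrac{1}{2}x^2 \bmod 1$, one has $f'(x)=\tfrac{5}{2}-x\in[\tfrac{3}{2},\tfrac{5}{2}]$, so (\ref{ue_definition}) holds with $\lambda\geq 3/2$, and $f$ is a degree-$\beta=2$ covering. The two inverse branches solve $v^2-5v+2(x+\iota)=0$ with $\iota\in\{0,1\}$, the root in $[0,1]$ being
\[ v_\iota(x)=\tfrac12\bigl(5-\sqrt{25-8(x+\iota)}\bigr). \]
The only complex singularity of $v_\iota$ is the square-root branch point at $x+\iota=25/8$, which lies at positive real distance from the lift domain $[0,2]$. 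Consequently $v_\iota''/v_\iota'$ is holomorphic on a strip $\Lambda^2_\delta$ for any $\delta$ less than this distance, and $(\DA_\delta)$ holds with a $C_{1,\delta}$ readable directly from the formula above.

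With the distortion condition secured, Corollary \ref{c:acim} gives $\|\rho_N-\rho\|_{BV}\leq N^{3/2}K(N)$ with $K\in\kappa(\DA_\delta)$ exponentially decaying. For part (a), I would integrate the analytic observable $\log|f'(x)|=\log(\tfrac{5}{2}-x)$ against $\rho_N=\sch_N u$ for $u\equiv 1/|\Lambda|$; the spectral-quadrature error is exponentially small and the error from replacing $\rho$ by $\rho_N$ is controlled by the above $BV$-bound together with the duality of $BV$ with bounded test functions. For part (b), the closed form (\ref{birkhoff_variance}) is evaluated with $\sch$ replaced by $\sch_N$ applied to the $E_N$-projection of $\psi:=(\id-\rho\ti)(\rho\phi)$, which lies in $\ker\ti$ since $\ti\rho=1$; Corollary \ref{c:sum} bounds the $\sch$-to-$\sch_N$ error and the standard spectral projection estimate for the analytic function $\psi$ handles the remainder.

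The principal obstacle is converting this qualitative outline into a \emph{rigorous} enclosure of width $\sim 10^{-128}$. One needs: (i) interval-arithmetic enclosures of the Galerkin matrix entries $L_{jk}$ that are tight and of controlled column-width, computed via the contour-integral representations implicit in the proofs of Theorems \ref{t:entry_p}--\ref{t:entry_np}; (ii) a verified inversion of $\si_N$, most practically by producing a floating-point approximate inverse and certifying it through a Neumann-series refinement in interval arithmetic so the condition number does not blow up the enclosures; and (iii) effective, computable constants in Corollaries \ref{c:acim} and \ref{c:sum}, which in particular require a rigorous lower bound on the spectral gap of $\tro$ on $BV$. Once these ingredients are in place, taking $N$ large enough that $N^{3/2}K(N)$ is well below the target tolerance and working at precision above $128$ decimal digits delivers the stated enclosures.
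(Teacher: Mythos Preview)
Your overall programme---verify an analytic distortion condition, invoke exponential decay from $\kappa(\DA_\delta)$, and run Algorithm~\ref{alg:rigorous} in high-precision interval arithmetic---matches the paper's. However, there is a genuine error in the set-up that would derail the argument as written.

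You place the Lanford map in $\upd$, i.e.\ you treat it as a $C^2$ circle map and work with the Fourier basis and strip domains $\Lambda^\beta_\delta$. But the Lanford map is \emph{not} a $C^1$ circle map: with the lift $\hat f(x)=\tfrac52 x-\tfrac12 x^2$ one has $f'(0)=\tfrac52$ while $f'(1^-)=\tfrac32$, so the derivative is discontinuous at the point where $0$ and $1$ are identified. The map is a two-branch full-branch Markov \emph{interval} map and belongs to $\unp$ (after rescaling $[0,1]$ to $[-1,1]$), not $\upd$. Consequently the relevant basis is Chebyshev, the analyticity domain for $(\DA_\delta)$ is a Bernstein ellipse $\check\Lambda_\delta$ rather than a strip, and the entry bounds come from Theorem~\ref{t:entry_np} (via the C-expansion parameter $\check\lambda$), not Theorem~\ref{t:entry_p}. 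Your identification of the branch-point singularity of $v_\iota$ is correct, but its distance must be measured in the Bernstein-ellipse parameter (this is why the paper's explicit coefficient bound and the choice $\zeta=\cosh^{-1}\tfrac74$ involve $\cosh^{-1}$).

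Two smaller points. First, the paper computes the $L_{jk}$ by interpolation at Chebyshev nodes plus an aliasing correction (as in Algorithm~\ref{alg:rigorous}), not via the oscillatory/contour integrals of Lemma~\ref{l:abstract_entry}; the latter supply only the \emph{a priori} bounds $b^L_{jk}$ and the aliasing constants. Second, the required explicit bound on $\|\sch\|_{BV}$ is obtained not from a rigorous spectral-gap estimate per se but from the explicit Lasota--Yorke/decay constants worked out in Appendix~\ref{a:bv-norm}, yielding $\|\sch\|_{BV}\le 9235$; with this in hand the paper takes $N=2048$ at $512$-bit precision to reach $\|\E_N\|_{BV}\le 6.75\times10^{-133}$, somewhat larger than the ``few hundred'' you anticipate.
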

These bounds are derived in Section \ref{ss:rigorous}.

\section{Algorithms}\label{s:algorithms}

Our results suggest a variety of possible algorithms to capture, given a map, statistical properties that can be expressed as $\sch \phi$ for some $\phi$, such as acims (\ref{acim_solution}) and diffusion coefficients (\ref{birkhoff_variance}).
We present two possible algorithms a practitioner might wish to use to calculate invariant measures: one that gives rigorous bounds on statistical properties but is somewhat cumbersome for exploratory use, and one that gives accurate but non-validated estimates that is much more convenient to use. In this section we describe the two algorithms, and then explain how in both algorithms we calculate elements of the transfer operator matrix. We will demonstrate the algorithms in Section \ref{s:numerics}.

Algorithm \ref{alg:rigorous} is a traditional fixed-order spectral method, implemented in interval arithmetic. It requires as input the map inverses $v_\iota$ and their derivatives, a spectral order $N$, various bounds associated with elements of the transfer operator, and a bound on the norm of the solution operator\footnote{Available theoretical bounds typically scale exponentially with the distortion bound $C_1$ (see \cite{Korepanov16} and Appendix \ref{a:bv-norm}). However, at least in the analytic case, the spectrally fast convergence dominates the large theoretical bounds. It is only necessary to control floating-point error using an appropriately high numerical precision.	
	Alternatively and more generally, one may apply the approach of \cite{Galatolo14}.} $\sch$; it then outputs an estimate for the acim $\rho$ with a rigorously validated $BV$ error.

\begin{algorithm}
	\SetKwComment{Com}{\# }{}
	\KwIn{Map inverses and derivatives $v_\iota, v_\iota',\ \iota \in I$; spectral order $N$; aliasing bounds $A_{jk}^{(N)}$ for $j,k = 1,\ldots,N$; bound $b^\sch \geq \|\sch\|_{BV}$; bound $b^{\E_N} \geq \|\E_N\|_{BV}$ (see Lemma \ref{l:strong-convergence}); bounds $b^{L}_{jk} \leq |L_{jk}|$.}
	\KwOut{High-precision floating-point vector $\tilde{\ac}$ containing spectral coefficients of acim estimate; rigorous $BV$ error bound $\bar{\epsilon}_{\mathrm{obs}}$}
	Check that {$b^{\E_N} b^{\sch}<1$}: if this is not the case increase $N$\;
	Set the number of floating-point bits to be greater than $-\log_2(N^4 \ast b^{\E_N})$\;
	Initialise $N\times N$ matrix of intervals $L^{(N)}$\;
	\For{$k \as 1$ \KwTo $N$}{
		Calculate interpolant values $q^{(k,N)} = \{ \tro(b_k)(x_{l,N}) \}_{l=1}^N$ in interval arithmetic, using  (\ref{transfer_action})\;
		Calculate spectral coefficients of the interpolant $p^{(k,N)} = FFT(q^{(k,N)})$ ($DCT(q^{(k,N)})$ in the Chebyshev case)\;
		\For{{\sf j} $\as 1$ \KwTo $N$}{
			Calculate spectral coefficient matrix entry $L^{(N)}_{jk}$ as $q^{(k,N)}_j$ plus aliasing error $[-A_{jk}^{(N)},A_{jk}^{(N)}]$\;
			Refine interval estimate $L^{(N)}_{jk}$ by intersecting it with $[-b^L_{jk},b^L_{jk}]$\;
		}
	}
	Calculate $u^{(N)} = \{[\delta_{j0}/|\Lambda|,\delta_{j0}/|\Lambda|]\}_{j=1}^N$\;
	Calculate row vector of intervals $\ti^{(N)} = (\ti b_j)_{j=1}^N$ using standard formulae \cite{Trefethen13}\;
	Calculate the spectral coefficient matrix of $\sch_N^{-1}$, $K^{(N)} = I - L^{(N)} + \ti^{(N)} u^{(N)}$, where $I$ is an $N\times N$ identity matrix\;
	Calculate $\ac^{(N)} = K^{(N)} \backslash u^{(N)}$\label{rig_solve}\;
	Calculate $\tilde{\ac} = \{ \midp(\ac^{(N)}_j) \}_{j=1}^N$\;
	Calculate a bound $\bar\epsilon_{\mathrm{interval}} > \|\ac^{(N)}-\tilde{\ac}\|_{BV}$\;
	Calculate $\bar\epsilon_{\mathrm{finite}} = 1/(1/(b^{\E_N} b^{\sch})-1)$\;
	Calculate $\bar{\epsilon}_{\mathrm{obs}} = \bar\epsilon_{\mathrm{interval}} + \bar\epsilon_{\mathrm{finite}} $\;
	\caption{Rigorous algorithm to capture invariant measures.}\label{alg:rigorous}
\end{algorithm}

\begin{algorithm}
	\SetKwComment{Com}{\# }{}
	\KwIn{Map $f$; map derivative $f'$ (optional; may be calculated automatically using dual number routines \cite{Revels16}); tolerance $\epsilon$}
	\KwOut{Adaptive order $k_{\mathrm{opt}}$; floating-point vector $\tilde{\boldsymbol{\ac}}$ containing spectral coefficients of acim estimate $\tilde{\ac}_{N_{\mathrm{opt}}}$}
	\Com{Extendable vectors encode an infinite vector with finitely many non-zero entries, ragged matrices' columns are extendable vectors. These will encode infinite-dimensional objects approximating $u$, $\si$.}
	
	Initialise empty ragged matrix $H$, which will hold Householder vectors for row-reduction\;
	Initialise empty ragged matrix $\hat{K}$, which will hold row-reduced coefficients of solution operator inverse $\si$\;
	Calculate extendable vector $\hat{u} = (\delta_{j1}/|\Lambda|)_{j\geq 1}$ containing coefficients of $u$ that will be progressively row-reduced\;
	Set $k$, the number of columns of matrix $\hat{K}$, to be 0\;
	\Repeat( \Com*[h]{Loop between calculating columns of $\hat{K}$ and row-reducing}){$\max \{u_j\}_{j\geq k+1} \leq\epsilon/|\Lambda|^{-1}$ \Com*[h]{i.e.\ negligible benefit from larger k}}{ 
		Increment $k$ by $1$\;

		Set the interpolation order $M = 4$\;
		\Repeat( \Com*[h]{Calculating optimum order interpolant of $\tro b_k$}){the interpolant has converged according to the reasoning in \cite{Aurentz17}}{
			Set $M\leftarrow 2M$\;
			Calculate values of the interpolant $q^{(k)} = \{\tro(b_k)(x_{l,M})\}_{l=1}^M$ using (\ref{transfer_action}) with Newton iteration for the transfer operator\;
			Calculate spectral coefficients of the interpolant $p^{(k)} = FFT(q^{(k)})$ ($DCT(q^{(k)})$ in the Chebyshev case)\;
		}
		Calculate $\kappa^{(k)}$, which will become the $k$th column of $\hat{K}$, as an extendable vector $\{\delta_{jk} + \ti_k \delta_{j1}\}_{j\geq1} - p^{(k)}$, where $\ti_k=(\ti b_k)$ is calculated from Chebyshev and Fourier integral formulae \cite{Trefethen13}\;
		Apply previous Householder transformations encoded as column vectors of $H$ to $\kappa^{(k)}$\;
		Calculate Householder vector $h$ that will row-reduce $\kappa^{(k)}$ considered as the $k$th column of $\hat{K}$\;
		Apply  $h$ to $\kappa^{(k)}$\;
				Right-concatenate $\kappa^{(k)}$ onto $\hat{K}$\;
					\Com{Note $\hat{K}$ is row-reduced and so upper-triangular}
		Apply $h$ to $\hat{u}$\;
		Right-concatenate $h$ onto $H$\;
	}
		Set $N_{\mathrm{opt}} = k$\;
	Calculate $\tilde{\boldsymbol{\rho}} = \{\hat{K}_{jk}\}_{j,k=1}^{N_{\mathrm{opt}}} \backslash \{\hat{u}_j\}_{j=1}^{N_{\mathrm{opt}}}$ via backsolving\;

	\caption{Algorithm to capture invariant measures using adaptive interpolation and infinite-dimensional adaptive QR solver.} 
	\label{alg:adaptive}
\end{algorithm}

By contrast, Algorithm \ref{alg:adaptive} is an adaptive-order spectral method that is not rigorously validated: it uses an adaptive QR factorisation of the solution operator inverse $\si$ to solve the linear problem and test for convergence \cite{Olver13Fast,Hansen10}. It requires as input only an algorithm to calculate the map $f$ and outputs an estimate for $\ac$ whose error is not rigorously bounded but is of the order of $\|\sch\|_{BV} \epsilon^{1-\theta}$, where $\epsilon$ is the floating-point precision and $\theta$ is a small number depending on the order of differentiability of $f$.

Algorithm \ref{alg:adaptive} is extremely well-suited for numerical exploration. Because the only required input is the map itself, Algorithm \ref{alg:adaptive} requires a minimum of drudge work on the part of the user. It is typically also extremely fast:  just with a personal computer, Algorithm \ref{alg:adaptive} gives estimates of statistical quantities of a simple analytic map accurate to $14$ decimal places in less than one-tenth of a second  (see Section \ref{s:numerics}). Because our spectral methods are very accurate in an easily verifiable way, an adaptive, non-validated method is also highly reliable. We have consequently made an implementation of Algorithm \ref{alg:adaptive} available in the open-source Julia package Poltergeist \cite{Poltergeist}.

In the presentation of the algorithms and the following discussion we assume that the Fourier and Chebyshev spectral bases have been relabeled as $(b_k)_{k\in\N^+}$. We also implicitly assume that the Fourier exponential basis has been transformed to sines and cosines so that real functions have real spectral coefficients.

In both algorithms, one calculates $L_N$ by columns, using that the $k$th column of $L_N$ consists of the first $N$ spectral coefficients of $\tro b_k$. The most effective way to estimate these coefficients is by calculating an interpolant. 
The idea of this is as follows. Using (\ref{transfer_action}) one evaluates the function $\tro b_k$ at $N$ special interpolation nodes $x_{l,N}$: in the Fourier case these interpolation nodes are evenly-spaced on the periodic invterval (in the Chebyshev case respectively, Chebyshev nodes of the first kind) \cite{Trefethen13,Boyd01}. One then applies the Fast Fourier Transform (resp. Discrete Cosine Transform) to the vector $((\tro b_k)(x_{l,N}))_{l=1,\ldots, N}$. The resulting length-$N$ vector contains the spectral coefficients of the unique function $p^{(k,N)} \in E_N$ which matches $\tro b_k$ at the interpolation nodes. The so-called interpolant $p^{(k,N)}$ is a close approximation of $\tro b_k$: the difference between the $j$th spectral coefficient of $p^{(k,N)}$ and that of $\tro b_k$ (the so-called aliasing error) is guaranteed to be smaller than some bound $A_{jk;N}$. This bound can be determined from aliasing formulae standard in approximation theory \cite{Trefethen13} combined with bounds on higher-order spectral elements of $\tro b_k$ (e.g. from Theorems \ref{t:entry_p}-\ref{t:entry_np}).

These algorithms generalise very easily to other transfer operator problems of the form $\psi = \sch \phi$: see for example the formula for diffusion coefficients (\ref{birkhoff_variance}). This can be done by formulating the problem as $\si \psi = \phi$ and thus substituting $\rho$ and $u$ (when it is not constituting the solution operator) for $\psi$ and $\phi$ respectively in the algorithms.

\section{Numerical results}\label{s:numerics}

In Section \ref{ss:rigorous} we will prove some rigorous bounds on basic statistical properties of the Lanford map using the rigorous Algorithm \ref{alg:rigorous}; we will then demonstrate the adaptive Algorithm \ref{alg:adaptive} using the Lanford map and a non-smooth circle map, assessing the adaptive algorithm's accuracy and the spectral method's rate of convergence.

\subsection{Rigorous bounds on statistical quantities: the Lanford map}\label{ss:rigorous}
	
	The Lanford map, $f: [0,1]\to[0,1]$
	\[ f(x) = 2x + \frac{1}{2}x(1-x) \mod 1 \]
	is a common test case for rigorous estimation of statistical quantities of maps \cite{Galatolo14,Jenkinson17,Bahsoun16}. By linearly rescaling of $[0,1]$ onto $[-1,1]$ we can apply our spectral method to it.
	
		The Lanford map's uniform expansion parameter is $\lambda = \frac{3}{2}$ and its distortion bound (on $[0,1]$) is $C_1 = \frac{4}{9}$.  
		Applying (\ref{solution_bound}), we find that
		$ \| \sch\|_{BV} \leq 9235$.
		
		By considering explicit bounds \Rii{that will be given in Lemma \ref{l:abstract_entry}}, we chose $\zeta = \cosh^{-1} \frac{7}{4}$, as it is close to the optimal value for $\zeta$ given in Remark \ref{r:optimal-zeta}. We then used a symbolic mathematics package to show that as a result of Remark \ref{r:optimal-zeta}, 
		\begin{equation} |L_{jk}| \leq t_j \sqrt{7+\frac{\sqrt{33}}{2}} e^{\cosh^{-1}(4-\sqrt{6}) k - \cosh^{-1} \frac{7}{4} j}. \label{lanford-coef-bound}\end{equation}
	
	To calculate an estimate of the acim of this map, we implemented Algorithm \ref{alg:rigorous} with $N=2048$. We found the truncation error was $\|\E_{N}\|_{BV} \leq  6.75 \times 10^{-133}$, and chose the floating-point precision to be 512 significand bits. 
	
	Consequently, we obtained an acim estimate $\tilde \ac$ with the rigorously validated error bound
	\[ \|\tilde \ac - \ac\|_{BV} \leq 6.3 \times 10^{-129}. \]
	This estimate is plotted in Figure \ref{f:rigorous-1}. The Chebyshev coefficients of $\tilde \ac$ are available in \url{Lanford-acim.zip}.
	
	\begin{figure}[htb]
		\centering
		\includegraphics[width=80mm
		]{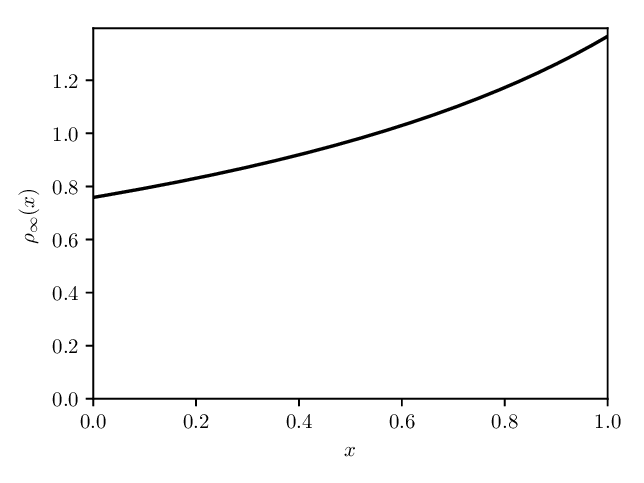}
		\caption{The density of the absolutely continuous invariant measure for the Lanford map, obtained by Algorithm \ref{alg:rigorous}.}
		\label{f:rigorous-1}
	\end{figure}
	
	We then used this estimate to calculate the Lyapunov exponent of the Lanford map
	\[ L_{exp} = \int_{\Lambda} \log |f'(x)| \ac(x) dx. \]
	using Clenshaw-Curtis quadrature on $\tilde \ac \log |f'| = \tilde \ac \log(2-3x)$ \cite{Trefethen13}. This provided the rigorous estimate given in Theorem \ref{t:rig}(a).

	We then calculated the diffusion coefficient of the observable $\phi(x) = x^2$ by evaluating the natural finite-order approximation of formula (\ref{birkhoff_variance}), using Clenshaw-Curtis quadrature.
	obtaining the rigorous bound given in Theorem \ref{t:rig}(b).

	The results together were obtained in 9 hours over 15 hyper-threaded cores of a research server running 2 E5-2667v3 CPUs with 128GB of memory. The most time-consuming operation was inverting the solution operator inverse matrix $\mathfrak{K}_{2048}$: this process took up $94\%$ of the runtime, which may stem partly from using an unoptimised routine. Once $\mathfrak{K}_{2048}^{-1}$, i.e. the solution operator matrix, was supplied, all the statistical quantities were calculated on a personal computer in seconds. 
	
\subsection{Adaptive algorithms}\label{ss:adaptive}

We now present results from the adaptive Algorithm \ref{alg:adaptive}, and illustrate the algorithm's convergence by comparison with a fixed-order version of Algorithm 2. 

We have implemented Algorithm \ref{alg:adaptive} in Julia, an open-source dynamic scientific computing language. This implementation is publically available in the package Poltergeist \cite{Poltergeist}. Poltergeist is integrated with ApproxFun, a comprehensive function approximation package written in Julia \cite{ApproxFun}; thus, standard manipulations of functions and operators may readily be applied to invariant measures, transfer operators and so on.

Using Poltergeist, we present empirical convergence results for the Lanford map (for comparison with rigorous methods), and a circle map which is $C^4$ but not analytic.

\subsubsection{The Lanford map}

The Lanford map experiment in Section \ref{ss:rigorous} can be repeated in Poltergeist in a few lines of Julia code:
\begin{verbatim}
using Poltergeist, ApproxFun
f_lift(x) = 5x/2 - x^2/2; d = 0..1
f = modulomap(f_lift,d);
K = SolutionInv(f); 
rho = acim(K);
L_exp = lyapunov(f,rho)
sigmasq_A = birkhoffvar(K,Fun(x->x^2,d))
\end{verbatim}

This code instantiates a {\tt MarkovMap} object {\tt f} 
and creates a {\tt QROperator} object \verb|K|, which stands in for the corresponding solution operator inverse $\si$ (recalling the definition of the solution operator inverse (\ref{solution_operator_inverse})). The \verb|acim| function carries out Algorithm \ref{alg:adaptive} by calling ApproxFun's adaptive QR solver \cite{Hansen10} on the equation \mbox{$\si\rho = u$}. The output is an ApproxFun {\tt Fun} object containing $\tilde{\boldsymbol{\ac}}_N$, the Chebyshev coefficients of the adaptive acim estimate. The Lyapunov exponent and diffusion coefficient are calculated using special commands defined in the package that call appropriate ApproxFun integration and QR solving routines, in the latter case via (\ref{birkhoff_variance}).
Once this the relevant functions have compiled using Julia's just-in-time compiler, the last five lines of the code will run in less than $0.12$ seconds on a personal computer. 

By applying Algorithm 2 with fixed orders $N$, the exponential convergence of $\ac_N$ with $N$ predicted in Theorem \ref{t:main} was seen to hold in practice. Indeed, only $N_{\mathrm{opt}} = 24$ columns of the transfer operator were required for convergence using Algorithm \ref{alg:adaptive} (see Figure \ref{f:adaptive-1}).

The Algorithm \ref{alg:adaptive} estimate for $\ac$ is in fact remarkably accurate: the $\ell^\infty$ error on Chebyshev coefficients is less than $8 \times 10^{-15}$ (40 times the floating point precision) and the BV error on the acim estimate is $3\times 10^{-13}$ (around 1300 times the floating point precision). The Lyapunov exponent estimate was correct almost to within the floating point precision, with the error compared to the rigorous estimate being $2.2 \times 10^{-16}$: this level of accuracy appears fortuitous rather than representative. More realistically, the estimate for $\V_f(A)$ was accurate to about $25$ times floating point precision ($1.4 \times 10^{-15}$).

	\begin{figure}[htb]
		\centering
		\includegraphics[width=80mm
		]{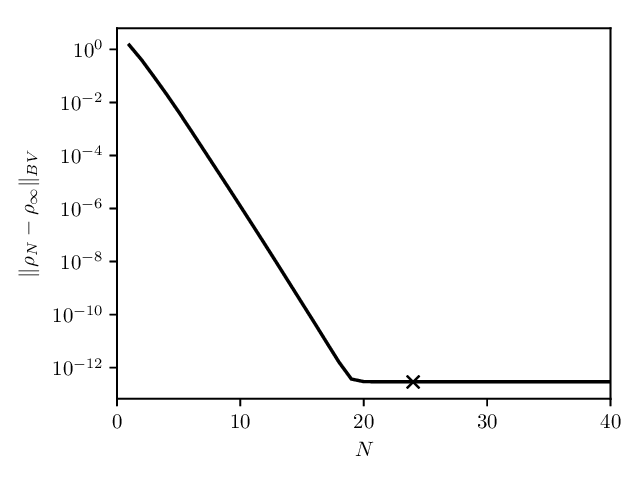}
		\caption{Exponential convergence with $N$ of floating-point estimates of $\ac_N$ for the Lanford circle map. The error of the adaptive estimate for $\ac_{N_{\mathrm{opt}}}$ from Algorithm \ref{alg:adaptive} is shown as a cross for comparison.
		}
		\label{f:adaptive-1}
	\end{figure}

\subsubsection{A non-analytic circle map}

We now consider a circle map which does not satisfy an analytic distortion condition (\ref{distortion_ana}) but rather a differentiable distortion condition (\ref{distortion_diff_frac}).

Define the uniformly expanding, triple-covering circle map $g: [0,2\pi) \to [0,2\pi)$ via the inverse of its lift:
\[ v_{g}(x) = \frac{x}{3} + \sum_{m=0}^{\infty} 2^{-\frac{33}{8} m} \cos{\left(2^m \left(1-\cos \frac{x}{3}\right)\right)}. \]
The map $g$ is $C^{4.125-\epsilon}$ and thus satisfies distortion condition $(\DD_3)$ but not $(\DD_4)$.

We implement the acim-finding process in a similar fashion to the Lanford map, although to optimise for speed we also supply \verb|CircleMap| with the derivative for the lift:
\begin{verbatim}
g = CircleMap(v_g,0..2pi,diff=v_g_dash,dir=Reverse)
Lg = Transfer(g);
rho_g = acim(Lg);
\end{verbatim}
This routine took approximately $9$ minutes to run on a personal computer and required the evaluation of $N_{\mathrm{opt}}=2747$ columns of the transfer operator. It produced an acim estimate (plotted in Figure \ref{f:adaptive-2}) whose $BV$ error we estimate to be approximately $4.8 \times 10^{-10}$, by comparison with an estimate obtained using high-precision floating-point arithmetic and $N=6144$ columns. 

	\begin{figure}[htb]
		\centering
		\includegraphics[width=80mm
		]{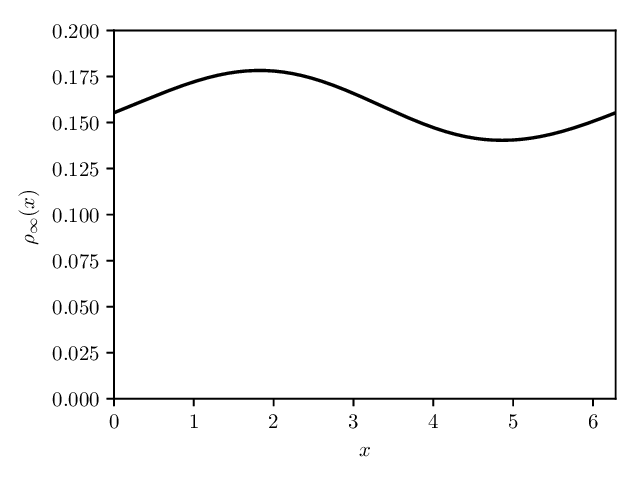}
		\caption{Invariant measure estimate for $g$ using Algorithm \ref{alg:adaptive}.}
		\label{f:adaptive-2}
	\end{figure}

	\begin{figure}[htb]
		\centering
		\includegraphics[width=80mm
		]{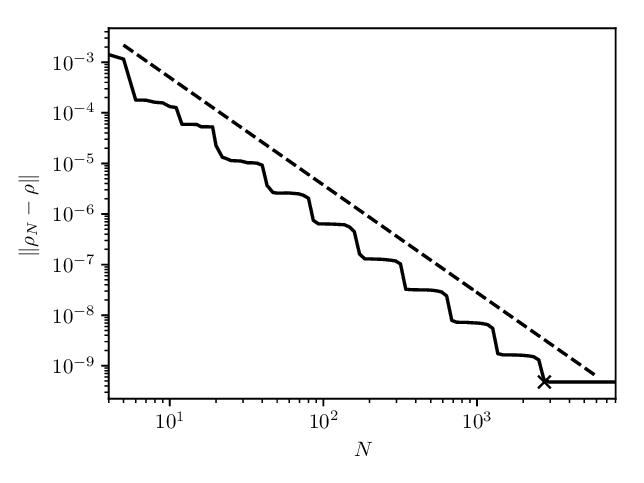}
		\caption{The convergence with $N$ of floating-point estimates of $\ac_N$ for $g$. 
			The error of the adaptive estimate for $\ac_{N_{\mathrm{opt}}}$ using Algorithm \ref{alg:adaptive} is plotted with a cross.
			The slope of a function $K(N) = C N^{-2.125}$ is plotted with a dashed line. Error estimates are by comparison with an $N=6144$ high-precision floating point acim estimate.}
		\label{f:adaptive-3}
	\end{figure}

The convergence of $\ac_N$ is illustrated in Figure \ref{f:adaptive-3}. The $BV$ error on $\ac_N$ is estimated to be $\Or(N^{\epsilon-2.125})$, which is better than the Theorem \ref{t:main} estimate of $\Or(N^{-1.5})$. 
We conjecture that acim estimates of $C^{r+\alpha}$ circle maps (i.e. those satisfying ``($\DD_{r-1+\alpha}$)'') converge in $BV$ as $\Or(N^{2-r-\alpha} \log N)$.

\begin{remark}
	Numerical experiments demonstrate that eigenvalues and eigenfunctions of $L_N$ converge in norm to those of $L$, as proved in the periodic case by \cite{Baladi99}. The observed rates of convergence are the standard spectral rates.
%
\end{remark}

\section{Proofs of results}\label{s:proofs}

Our attack on the theorems in Section \ref{ss:theorems} is structured as follows.

We begin by proving Theorem \ref{t:solution} characterising the solution operator. This proof uses standard linear-algebraic properties of transfer operators. 

We then turn to proving the entry bound results (Theorems \ref{t:entry_p} and \ref{t:entry_np}). These results stem from more general properties of Fourier series representations of composition operators (Lemma \ref{l:abstract_entry}), which we prove using oscillatory integral techniques. Because it is necessary to make a non-diffeomorphic cosine transformation to obtain Fourier basis functions from Chebyshev polynomials, some work is required to prove appropriate bounds on derivatives after the transformation.

We then go on to prove Theorem \ref{t:main}. We consider a perturbation of the transfer operator $\tro$ that is block-upper-triangular in the relevant spectral basis (in the Fourier case, under the basis order $e_0, e_1, e_{-1},e_2,e_{-2},\ldots$), with the finite matrix $L_N$ forming the first block on the diagonal. Since the solution operator of such a perturbation is a composition of upper block-diagonal operators, the first diagonal block can thus be approximated only from knowledge of $L_N$ (Lemma \ref{l:strong-convergence}). Using that the $BV$-norm of our perturbation can be bounded using spectral matrix coefficients (Lemma \ref{l:bvnorm}), we obtain the main result.
\\

We begin with the proof of Theorem \ref{t:solution}, which gives the properties of the solution operator $\sch = (I-\tro + u \ti)^{-1}$ (see (\ref{solution_operator})).

\begin{proof}[
	Theorem \ref{t:solution}]
	Split $\spc$ as $\osp \oplus \zsp$ where $\osp = \spn\{u\}$ and $\zsp = \ker\ti$.  \Rii{Since $\osp$ and $\zsp$ are closed subspaces of $\spc$ there exists a bounded operator $\mathcal{N}: \spc \to \osp$ such that one may also define $\id - \mathcal{N}: \spc \to \zsp$.} We now consider the action with respect to this splitting of the putative solution operator inverse, $\si = \id - \tro + u \ti$.
	
	Since $\ti(\tro - \id) = 0$, we have for any element $\phi \in \zsp$ that
	\begin{equation} \si \phi = (\id - \tro) \phi + u \ti \phi = (\id - \tro) |_\zsp \phi \in \zsp. \label{t-solution_u_action1} \end{equation}
	Similarly, for any scalar $\alpha$ we have
	\begin{equation} \si \alpha u = (\id-\tro) (\alpha  u) + u  \ti\alpha u =  \alpha u +  (\id - \tro)(\alpha u), \label{t-solution_u_action} \end{equation}
	where the sum follows the splitting of $\spc = \osp \oplus \zsp$.
	
	\Rii{
	Since the transfer operator $\tro$ has a spectral gap, the spectral radius of $\tro |_\zsp$ is strictly less than $1$, and the operator
	\[\trr := (\id-\tro|_{\zsp})^{-1} = \sum_{n=0}^\infty \left. \tro^n \right|_{\zsp}\] 
	is bounded as an endomorphism on $\zsp$.
	
	Back-solving (\ref{t-solution_u_action1}-\ref{t-solution_u_action}) thus gives that for any $\psi \in \zsp$,
		\[\sch \phi = \si^{-1} \psi = \trr \psi, \]
		and for any scalar $\alpha$ that
		\[ \sch \alpha u = \alpha u - \trr (\id - \tro) \alpha u = \lim_{n\to\infty} \tro^n \alpha u = \alpha \ac.\]
	
	Since $\alpha \rho = \rho \ti(\alpha u)$, we can use these results to write the solution operator
	\begin{equation} \sch = \trr (\id - \mathcal{N}) + \ac \ti \mathcal{N}, \label{t-solution-sum} \end{equation}
	which is clearly bounded.
	
	It clearly follows from (\ref{t-solution-sum}) that $\sch u = \ac$ and $\sch \phi = \trr \phi$ for $\phi \in \zsp$. }
\qed \end{proof}

\begin{remark}\label{r:solution_sum}
%
The solution operator can be written as the following expression
	\begin{equation} \sch = u \ti + \sum_{n=0}^{\infty} \tro^n (\id + (\tro u - 2u) \ti). \label{solution-sum}\end{equation}
\end{remark}

We now set about proving Theorems \ref{t:entry_p} and \ref{t:entry_np}, which place bounds on the magnitudes of the entries of transfer operator matrices in Fourier and Chebyshev bases. 

We begin by proving similar kinds of bounds on the coefficients of a matrix associated with a more general operator $\M$ on the  circle $\R/2\pi\Z$. $\M$ can be viewed as a generalised transfer operator (\ref{baladi_transfer}) where instead of using the inverse of the map, one uses a general function $v$ which may be non-injective. Bounds on elements of the Fourier basis transfer operator matrix for $\M$ imply similar bounds on transfer operators in Fourier and Chebyshev bases.

\begin{lemma}\label{l:abstract_entry}
	Let $v$ be a differentiable function from $\R/2\beta\pi\Z,\ \beta\in\Z^+$ to $\R/2\pi\Z$ such that $v'(\R/2\beta\pi\Z) = \tilde{\mu} = [\mu_2,\mu_1]$, and let $h$ be a continuous function on the circle $\R/2\beta\pi\Z$.
	
	Let $\M$ be \Rii{the} endomorphism on $L_2([0,2\pi])$ defined by
	\begin{equation} \mathcal{M}:\ \phi \mapsto \sum_{b=1}^{\beta} h\left(x+2\pi b\right) \phi\left(v\left(x+2\pi b\right)\right).\label{calm_defn} \end{equation}
	
	Let $M$ be the corresponding bi-infinite matrix in the Fourier complex exponential basis. 

	Then:
	\begin{enumerate}[(a)]
		\item The \Rii{entries} of $M$ are bounded uniformly by $\|h\|_1/2\pi$.
		
		\item Suppose that 
		for $n=1,\ldots,r$, $\sup|v^{(n+1)}| \leq \Upsilon_{n} < \infty$ and $\sup |h^{(n)}/h| \leq H_n < \infty$. Then there exist constants $W_{r,n}$ such that for $j\notin k \tilde{\mu}$,
		\begin{equation} |M_{jk}| \leq \frac{\|h\|_1}{2\pi} \sum_{n=0}^{r} \frac{W_{r,n} |k|^n}{d(j,k \tilde{\mu})^{n+r}}.\label{entry_differentiable} \end{equation}
		
		Each $W_{r,n}$ is bounded by a linear combination of $H_{l},\ l\leq r-n$, whose coefficients are polynomials in $\Upsilon_{l},\ l\leq r-n$.
		
		
		\item Suppose $v$ and $h$ extend analytically to the complex strip $\Lambda^\beta_\delta = [0,2\beta\pi) + i[-\delta,\delta]$, and on this strip $\sup |h'/h| \leq H_{1,\delta} <\infty$ and $\sup |v''| \leq \Upsilon_{1,\delta} < \infty$. 
		
		Choose any $\tilde p = [p_1,p_2]$ such that $\tilde{\mu} \subset \int\tilde p$.
		
		Define
		$\zeta = \min\left\{2\Upsilon_{1,\delta}^{-1} d(\tilde{\mu},\R\backslash \tilde{p}),\delta\right\}.$ 
	
		Then $\zeta > 0$ and
		\begin{equation} |M_{jk}| \leq \frac{\| h\|_1}{2\pi} e^{\zeta \left(H_{1,\delta} - d(j,\tilde p)\right)}.\label{entry_analytic} \end{equation}
	\end{enumerate}
\end{lemma}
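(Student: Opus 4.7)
The plan is to reduce the matrix entry to a single oscillatory integral and then attack each part with a classical technique. Collapsing the $\beta$ preimage branches in $\mathcal M$ via the substitution $y = x + 2\pi b$ and exploiting periodicity, one obtains
\[ M_{jk} = \frac{1}{2\pi}\int_0^{2\pi\beta} h(y)\, e^{i\psi(y)}\,dy,\qquad \psi(y) := kv(y) - jy. \]
Part (a) is then immediate from $|e^{i\psi}|\le 1$ and the triangle inequality.

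For part (b), I will integrate by parts $r$ times using $e^{i\psi} = (i\psi')^{-1}(d/dy)e^{i\psi}$. This is legitimate since the hypothesis $j\notin k\tilde\mu$ forces $|\psi'(y)| = |kv'(y) - j| \ge d(j,k\tilde\mu) > 0$, and periodicity kills every boundary term. After $r$ steps the integrand becomes $\mathcal L^r[h]\, e^{i\psi}$ where $\mathcal L[g] := (g/(i\psi'))'$. Expanding $\mathcal L^r[h]$ by Leibniz and the chain rule produces a finite sum of terms, each a product of a factor $h^{(a)}$ (written as $h\cdot h^{(a)}/h$) together with some higher derivatives $v^{(b)}$, $b\ge 2$, each inheriting one factor of $k$ from $\psi^{(b)} = kv^{(b)}$, divided by $\psi'^{m}$ with $m \ge r + n$, where $n$ counts the $v$-derivative factors. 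Grouping by $n$ yields the claimed expansion; the constants $W_{r,n}$ are polynomial in the $\Upsilon_l$ and linear in the $H_l$ for $l\le r-n$ by inspection, and $|\psi'|\ge d(j,k\tilde\mu)$ combined with integration against $|h|$ completes the bound.

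For part (c), I will deform the contour into the analyticity strip $\Lambda^\beta_\delta$, taking $\Gamma_\zeta = [0,2\pi\beta) - i\epsilon\zeta$ with $\epsilon \in \{\pm 1\}$ chosen so that $\epsilon(j - kv'(x))$ has a uniform positive lower bound on the real line --- which is possible precisely because $j$ lies strictly outside $k\tilde p\supset k\tilde\mu$ --- and with $\zeta\le\delta$ so that the contour stays in the strip. Periodicity of the integrand kills the vertical connecting pieces. On $\Gamma_\zeta$, the bound $|h'/h|\le H_{1,\delta}$ integrated vertically gives $|h(x-i\epsilon\zeta)| \le |h(x)|\, e^{\zeta H_{1,\delta}}$, and a first-order Taylor expansion of $v$ with integral remainder,
\[ v(x - i\epsilon\zeta) = v(x) - i\epsilon\zeta\, v'(x) + R(x),\qquad |R(x)| \le \tfrac{1}{2}\zeta^2\Upsilon_{1,\delta}, \]
yields $-\operatorname{Im}\psi(y) \le -\epsilon\zeta(j - kv'(x)) + \tfrac{1}{2}|k|\zeta^2\Upsilon_{1,\delta}$. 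The constraint $\zeta \le 2\Upsilon_{1,\delta}^{-1} d(\tilde\mu,\R\setminus\tilde p)$ is exactly what absorbs the quadratic remainder into a linear excess, sharpening the effective distance from $d(j,k\tilde\mu)$ to $d(j,\tilde p)$ (interpreted via the rescaled interval $k\tilde p$). Multiplying by the bound on $|h|$ and integrating over $x$ delivers the claim.

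The main technical obstacle will be the bookkeeping in (b) --- tracing exactly which differentiations produce powers of $k$ so as to identify the combinatorial form of $W_{r,n}$ and verify the power $m\ge r+n$ in the denominator. The subtler step in (c) is the remainder-absorption, for which the factor of $2$ in the definition of $\zeta$ is precisely the margin needed to ensure the quadratic Taylor correction does not overwhelm the linear decay of the oscillatory exponential on the shifted contour.
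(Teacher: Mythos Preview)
Your proposal is correct and follows essentially the same route as the paper: reduction to the single oscillatory integral $M_{jk} = \frac{1}{2\pi}\int_0^{2\pi\beta} h\,e^{i(kv-j\cdot)}$, then the trivial bound for (a), $r$-fold integration by parts via the operator $g\mapsto (g/\psi')'$ for (b), and a contour shift into the strip with a first-order Taylor estimate on $v$ for (c). The paper carries out the bookkeeping in (b) by deriving an explicit recurrence showing the $r$-th integrand has the exact form $\sum_{l=0}^r k^l w_{r,l}/(j-kv')^{r+l}$ (so your $m\ge r+n$ is in fact an equality), and in (c) the shift direction is written as $-i\zeta\operatorname{sgn} k$ rather than your $\epsilon$ determined by the sign of $j-kv'$, but these are the same choice once one tracks which side of $k\tilde p$ the index $j$ lies on.
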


%

\begin{proof}[
	Lemma \ref{l:abstract_entry}]
	The matrix element $M_{jk}$ is the $j$th Fourier coefficient of the function $\mathcal{M} e_k$, so using the orthogonality of Fourier bases in $L^2$ and (\ref{calm_defn}), we have that
	\[ M_{jk} = \frac{1}{2\pi} \int_{0}^{2\pi} \sum_{b=1}^{\beta} h\left(x+2\pi b\right) e^{ik v\left(x+2\pi b\right)} e^{-ijx}dx, \]
	which using the $2\pi$-periodicity of $e^{ijx}$ we can rewrite as a single integral
	\begin{equation} M_{jk} = \frac{1}{2\pi} \int_0^{2\beta\pi} h(x) e^{i(kv(x)- jx)} dx. \label{entry_integral}\end{equation}
	
	We obtain (a) from this equation simply by taking absolute values.
	
	For (b), we use that the integrand in (\ref{entry_integral}) is oscillatory when the derivative of $k v(x) - j x$ is bounded away from zero, that is, when $j/k \notin [\mu_2,\mu_1]$. As a result, we can improve the bound we got in the first part by repeatedly integrating by parts.
	
	Starting from (\ref{entry_integral}), we separate the integrand into two terms
	\[ \left(\frac{h(x)}{i(kv'(x)-j)}\right) \left(i(kv'(x)-j) e^{i(kv(x)-jx)}\right), \]
	so as to integrate by parts, differentiating the left term and integrating the right. Because the right term integrates to zero, the boundary terms in the integration by parts formula cancel, and we are left with an integral of the same form as (\ref{entry_integral}) on which we can repeat the process. Thus we obtain a family of expressions
	\[ M_{jk} = \frac{(-1)^n}{2\pi} \int_{0}^{2\beta\pi} h_n(x) e^{i(kv(x)-jx)} dx,\ n \leq r, \]
	with each $h_n$ being $(r-n)$-times differentiable and defined by the recurrence relation
	\[ h_0 = h,\ h_{n+1} = -i \left[ \frac{h_n}{j-k v'} \right]'. \]
	We find by induction that 
	\[ h_n = i^n \sum_{l=0}^{n} \frac{k^{l} w_{n, l}(x)}{(j-kv'(x))^{n+l}}, \]
	with $w_{n,l}$ having the recurrence relation
	\begin{align*}
		w_{n,l} &= w'_{n-1,l} + (n+l-1) v'' w_{n-1,l-1},&& 0<l<n,\\
		w_{n,0} &= w_{n-1,0}',&& n>0, \\
		w_{n,n} &= 2n v'' w_{n-1,n-1}, && n>0,\\
		w_{0,0} &= h.
	\end{align*}

	By induction, we see that each $w_{n,l}$ has the form
	\[ w_{n,l} = \sum_{l'=0}^{n-l} \omega_{n,l,l'}(v'',\ldots,v^{(n-l+2)}) h^{(l')},\]
	where $\omega_{n,l,l'}$ are degree $l$ homogeneous polynomials with positive coefficients. (The $\omega_{n,0,l'}$ are constants as a result, and thus issues of existence of derivatives do not arise.) 
	
	Setting
	\[ W_{n,l} = \sup_{x\in[0,2\pi]} \frac{|w_{n,l}(x)|}{|h(x)|} \leq \sum_{l'=0}^{n-l} \omega_{n,l,l'}(\Upsilon_1,\ldots, \Upsilon_{n-l+1}) H_{l'}, \]
	we have
	\[ |M_{jk}| \leq \frac{1}{2\pi}\int_0^{2\beta\pi} 
	\sum_{l=0}^n \frac{ W_{n,l}|h(x)| |k|^l}{|j-k v'(x)|^{n+l}} dx,\]
	from which (\ref{entry_differentiable}) follows by H\" older's inequality.
	
	For (c), we use the $2\beta\pi$-periodicity of the integrand of (\ref{entry_integral}) to move the contour of integration. When $j/k < p_2$, we shift the contour of integration by $-i \zeta\sgn k $ in the complex plane so
	\begin{equation} M_{jk} = \frac{1}{2\pi}\int_{0}^{2\beta\pi} h(x-i\zeta\sgn k ) e^{ikv(x-i\zeta\sgn k ) - ij(x-i\zeta\sgn k )} dx. \label{shifted_entry_integral}\end{equation}
	
	We now use our bounds on derivatives of $h$ and $v$ to bound elements of this expression, beginning with the argument of the exponential. 
	
	Applying Taylor's theorem to $\Im v(x+i\xi)$, we have
	\[ \Im v(x - i\zeta\sgn k) =  -  \zeta \sgn k v'(x) - \frac{1}{2}\zeta^2 \Im v''(\xi) \]
	 for some $\xi \in  \Lambda^\beta_\delta$. This gives us that
	\begin{align*} \Re (ikv(x - i\zeta\sgn k)) &\leq \zeta k \sgn k |v'(x)| + |k| \frac{1}{2} \zeta^2 \Upsilon_{1,\delta} \\
	&\leq  \zeta |k| \left(\mu_1 + \frac{\zeta\Upsilon_{1,\delta}}{2}\right) \\ &
	\leq \zeta |k| p_1, \end{align*}
	where the last inequality results from the definition of $\zeta$ in the statement of the lemma.
	
	We can bound $h(x-i\zeta\sgn k)$ by using that the Lipschitz constant of $\log h$ on $ \Lambda^\beta_\delta$ is $\sup |h'/h| \leq H_{1,\delta}$. As a result,
	\[ |h(x-i\zeta\sgn k)| \leq |h(x)| e^{|i\zeta\sgn k| H_1} = |h(x)| e^{\zeta H_{1,\delta}}.\]
	
	Thus when we take absolute values on (\ref{shifted_entry_integral}) we obtain that
		 \[ |M_{jk}| \leq \frac{1}{2\pi} \int_0^{2\beta\pi} |h(x)| e^{\zeta H_{1,\delta}} e^{\zeta \sgn k (k p_1 - j) } dx. \]
	Using that $\sgn k = \sgn (j - p_1 k)$ for $j > p_1 k$ and H\"older's inequality yields (\ref{entry_analytic}).
	
		The proof of (c) for $j/k < p_2$ is analogous, with the contour shifted in the opposite direction.

\qed \end{proof}

Given Lemma \ref{l:abstract_entry}, Theorem \ref{t:entry_p} is an elementary result. It is necessary only to check that the conditions for the theorem imply the conditions for the lemma, and vice versa for the results. 

\begin{proof}[
	Theorem \ref{t:entry_p}]

From (\ref{transfer_action}), the transfer operator $\tro$ of a map $f \in \bupd$ has action
\[ \tro \phi(x) = \sum_{n=1}^{b} \sigma v'(x+2b\pi) \phi(v(x+2b\pi)), \]
where $\sigma = \sgn v'(0)$. (Note that $v$ is monotonic and so $\sigma v' = |v'|$). 

Since $\lambda_2^{-1}<|v'|<\lambda_1^{-1}$, we can apply Lemma \ref{l:abstract_entry} with $h = \sigma v'$.

Suppose that $f$ satisfies (\ref{distortion_diff_frac}). Then we can set $\Upsilon_n = C_n$ for all $n \leq r$, as the definition of $C_n$ in (\ref{distortion_diff_frac}) and of $\Upsilon_n$ in Lemma \ref{l:abstract_entry} are the same. We can also set
\[ |v^{n+1}| \leq \left|\frac{v^{(n+1)}}{v'}\right| |v'| \leq \frac{C_n}{\min\{|\lambda_1|,|\lambda_2|\}} = H_n < \infty. \] 

This gives us what we need for Lemma \ref{l:abstract_entry}(b), and so there exist $W_{r,n}$ such that
\[ L_{jk} \leq \frac{\|v'\|_1}{2\pi} \sum_{n=0}^{r} \frac{W_{r,n} |k|^n}{|j - \lambda_m^{-1} k |^{n+r}} \]
where $\lambda_m$ is $\lambda_1$ for $j/k > \lambda_1^{-1}$ and $\lambda_2$ for $j/k < \lambda_2^{-1}$.

We can eliminate the sum by using that for $j/k > p_1$, 
\[ \frac{|k|^r}{|j - \lambda_m^{-1} k |^r} = \frac{1}{|j/k - \lambda_1^{-1}|^r} \leq \frac{1}{(p_1 - \lambda_1^{-1})^r}, \]
and similarly for $p_2$.
Furthermore, since $v'$ does not change sign, 
$\|v'\|_1 = |v(2\pi\beta) - v(0)| = 2\pi$.

Thus there exists a constant $C$ depending on the distortion constants $C_r$, expansion bounds $\lambda_{1,2}$ and constants $p_{1,2}$ such that for $j/k \notin [p_2,p_1]$ or $k = 0$,
\[ L_{jk} \leq \frac{C}{|j - \lambda_m^{-1} k|}\leq \frac{C}{|j - p_m^{-1} k|}, \] 
which implies the bound for maps in (\ref{distortion_diff_frac}) from Theorem \ref{t:entry_p}.

Similarly, suppose that $f$ satisfies (\ref{distortion_diff_frac}). Then $\Upsilon_{1,\delta} = C_{1,\delta} <\infty$,
and
\[ \sup_{v\in \Lambda^\beta_\delta} \left|v'\right| \leq e^{\delta C_{1,\delta}}\cdot\sup_{x\in[0,2\beta\pi)} |v'(x)| <\infty, \]
and hence by Lemma \ref{l:abstract_entry}(c) there exists $C>0$ and $\zeta \in (0,\delta]$ such that for $j/k > p_1$,
$ |L_{jk}| < C e^{-\zeta |j - p_2 k| }$,
and similarly for $j/k < p_2$.
\qed \end{proof}

Theorem \ref{t:entry_np} also follows from Lemma \ref{l:abstract_entry}, since we can piggyback off the relation between Chebyshev polynomials and Fourier series:
\[ T_k(\cos\theta) = \frac{1}{2} e^{ik\theta} + \frac{1}{2} e^{-ik\theta}.\]
However, because the cosine function on $[0,2\pi)$ is two-to-one with critical points at $0$ and $\pi$, the proof is less straightforward than for Theorem \ref{t:entry_p}. In particular, we will have to address how to turn the transfer operator of a map in $\bunp$ into the sum of operators of the form (\ref{calm_defn}), with regard to the two-to-one nature of the transformation. We will then need to examine how distortion bounds translate quantitatively under this transformation. Once we have done these, the bounds follow easily.

\begin{proof}[
	Theorem \ref{t:entry_np}]
From the definition of transfer operators (\ref{transfer_action}) and the orthogonality relation for the Chebyshev basis, we obtain the following formula for Chebyshev basis matrix elements of transfer operators of maps in $\bunp$:
\[ L_{jk} = \frac{t_j}{\pi} \sum_{\iota\in I} \int_{-1}^1 \frac{\sigma_\iota}{\sqrt{1-x^2}} v_\iota'(x) T_k(v_\iota(x))) T_j(x)\ dx, \]
where $\sigma_\iota = \sgn v_\iota'$, $t_j = 2-\delta_{j0}$, and the sum is taken over the branches of the map. Under the transformation $x = \cos\theta$ and using that $T_k(x) = \cos(k\cos^{-1}x)$, we find that $L_{jk}$ is related to a Fourier basis matrix entry for a weighted transfer operator:
\begin{equation} L_{jk} = \frac{t_j}{\pi} \sum_{\iota\in I} \sigma_\iota \int_{0}^\pi v_\iota'(\cos \theta) \cos(k \cos^{-1} v_\iota(\cos\theta))) \cos j\theta \ dx. \label{t-entry-np-1}\end{equation}
Based on this, we set $h_\iota = v_\iota'\circ\cos$ for each $\iota\in I$. These functions $h_\iota$ are $2\pi$-periodic.

Defining $\upsilon_{\iota+}:=\cos^{-1}\circ v_\iota\circ \cos$ and $\upsilon_{\iota-} = 2\pi - \upsilon_{\iota+}$, we find
\begin{align*}   L_{jk} &= \frac{t_j}{\pi} \sum_{\iota\in I} \sigma_\iota \int_{0}^\pi h_\iota(\theta) \cos(k \upsilon_{\iota+}(\theta)) \cos j\theta \ dx\\
&= \frac{t_j}{4\pi} \sum_{\iota\in I} \sigma_\iota \int_{0}^{\pi} h_\iota(\theta) \sum_{\pm}  \left(e^{i(k\upsilon_{\iota\pm}(\theta) - j\theta)} + e^{i(k\upsilon_{\iota\pm}(\theta)+j\theta)}\right) d\theta.
\end{align*}

Continuing $\upsilon_{\iota\pm}$ differentiably to the interval $[0,2\pi]$ and using that the integrands are symmetric about $\pi$, we can finally rewrite the transfer operator in the form 
\begin{equation}L_{jk} = \frac{t_j}{8 \pi} \sum_{\iota\in I,\,\pm} \sigma_\iota \int_{0}^{2\pi}  h_\iota(\theta) \left(e^{i(k\upsilon_{\iota\pm}(\theta) - j\theta)} + e^{i(k\upsilon_{\iota\pm}(\theta)+j\theta)}\right) d\theta. \label{cheby_transfer_exp}\end{equation}

If neither or both of $v_\iota(-1)$ and $v_\iota(1)$ are a singular point of the $\cos^{-1}$ transformation (i.e. $-1$ or $1$), then the $\upsilon_{\iota\pm}$ are differentiably defined on the circle $\R/2\pi\Z$. If one of these values is, then $\upsilon_{\iota+}$ will continue across the critical points on either side to $\upsilon_{\iota-}$ and so their concatenation  $\upsilon_\iota$ is a differentiable map on $\R/4\pi\Z$. Thus, if we define the sets $I_c = \left\{ \iota\in I: |v_\iota(\{ \pm 1 \})\cap \{\pm 1\}| = 1 \right\}$ 
and
$ I' = \left(I\backslash I_c \times \{+,-\}\right) \cup I_c, $
and set $\beta_{\iota'} = 1 + \mathbf{1}_{I_c}(\iota')$,
we have

\begin{equation}L_{jk} = \frac{t_j}{8\pi} 
\sum_{\iota\in I'} \sigma_\iota \int_{0}^{2\pi} \sum_{b=0}^{\beta_{\iota'}-1}  h_{\iota'}(\theta+2\pi b) e^{ik\upsilon_{\iota'} (\theta+ 2\pi b)} \left(e^{- ij\theta} + e^{ij\theta}\right) d\theta. \label{cheby_transfer_exp2}\end{equation}

Clearly, the summands are two-element sums of Fourier coefficient matrix elements of operators of the form \ref{calm_defn}. The following lemma, whose proof is for ease of exposition in Appendix \ref{a:distortion-proofs}, shows that that the relevant bounds on $\upsilon_{\iota'}$ and $h_\iota$ hold uniformly for all $\iota$:
\begin{lemma}\label{l:cheby_distortion}
	Suppose $f \in \bunp$ with partition spacing constant $\Xi$ and $I'$ is defined as above. Then
	\begin{enumerate}[(a)]
		\item If the $v_\iota$ satisfy $($\ref{distortion_diff_frac}$)$ with the same distortion constants $C_n,\ n\leq r$, then for $n\leq r$ there exist $\Upsilon_n, H_n <\infty$ depending only on $C_m, m\leq n$ and $\Xi$ such that
		\[ \sup_{\theta \in [0,2\pi], \iota'\in I'} \upsilon_{\iota'}^{(n+1)}(\theta) \leq \Upsilon_{n}\]
and
 \[ \sup_{\theta \in [0,2\pi], \iota'\in I'} \left| \frac{h_{\iota'}^{(n)}(\theta)}{h_{\iota'}(\theta)} \right| \leq H_n < \infty. \]
		\item If the $v_\iota$ obey $($\ref{distortion_ana}$)$ with the same distortion constant $C_{1,\delta}$, then there exists $\zeta \in (0,\delta]$, $\Upsilon_{1,\zeta}, H_{1,\zeta} <\infty$ depending only on $\zeta$, $C_{1,\delta}$ and partition spacing constant $\Xi$ such that
					\[ \sup_{\theta \in \Lambda^{\beta_{\iota'}}_\zeta, \iota'\in I'} \upsilon''_{\iota'}(\theta) \leq \Upsilon_{1,\zeta}\]
					and
					\[ \sup_{\theta \in \Lambda^{\beta_{\iota'}}_\zeta, \iota'\in I', \pm} \left| \frac{h'_{\iota'}(\theta)}{h_{\iota'}(\theta)} \right| \leq H_{1,\zeta} < \infty. \]
	\end{enumerate}
\end{lemma}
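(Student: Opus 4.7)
The plan is to control both quantities uniformly in $\iota'$ by reducing, via the chain rule and Fa\`a di Bruno's formula, to distortion ratios $v_{\iota'}^{(k+1)}/v'_{\iota'}$ (directly bounded by the $C_k$) together with ``sine-ratio'' factors arising from the $x=\cos\theta$ substitution. The main work lies in bounding the sine-ratio factors uniformly in the branch, which requires a two-case split depending on whether $\O_{\iota'}$ abuts the boundary of $\Lambda$.

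The bound on $h_{\iota'}^{(n)}/h_{\iota'}$ is the mechanical part. Since $h_{\iota'}(\theta) = \sigma_{\iota'}\,v'_{\iota'}(\cos\theta)$, Fa\`a di Bruno expresses $h_{\iota'}^{(n)}/h_{\iota'}$ as a polynomial in the ratios $v_{\iota'}^{(k+1)}/v'_{\iota'}$ ($k\leq n$) with coefficients polynomial in the derivatives of $\cos\theta$; these coefficients are uniformly bounded by $1$ and the distortion ratios by $C_k$, yielding $H_n$ depending only on $C_1,\ldots,C_n$. In the analytic case the same argument applies on $\Lambda^{\beta_{\iota'}}_\zeta$ provided $\zeta\leq\delta$, since $\cos$ then maps this strip into $\check\Lambda_\delta$.

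The bound on $\upsilon_{\iota+}^{(n+1)}$ is the substantive part. Implicitly differentiating $\cos\upsilon_{\iota+}(\theta) = v_\iota(\cos\theta)$ gives
\[ \upsilon'_{\iota+}(\theta) = v'_\iota(\cos\theta)\,\frac{\sqrt{1-\cos^2\theta}}{\sqrt{1-v_\iota(\cos\theta)^2}}, \]
which is precisely the reciprocal of the C-expansion quantity for $f$ at $v_\iota(\cos\theta)$, and so uniformly bounded by $1/\check\lambda$. Equivalently, $\upsilon_{\iota+}$ is an inverse branch of the conjugated map $\cos\circ f\circ\cos^{-1}$, whose uniform expansion with parameter $\check\lambda$ is equivalent to C-expansion of $f$; higher derivatives of $\upsilon_{\iota+}$ reduce to the distortion ratios of this conjugated inverse branch. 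For $\iota\notin I_c$ with $\O_\iota\subsetneq\Lambda$, the partition spacing hypothesis $|\O_\iota|/d(\O_\iota,\pm 1)\leq\Xi$ together with bounded distortion on $v_\iota$ gives quantitative control of the trigonometric factors arising from the conjugation: after algebraic manipulation and induction on $n$ these bounds depend only on $C_1,\ldots,C_n$ and $\Xi$.

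The main obstacle is the critical case where at least one of $v_\iota(\pm 1)$ lies in $\{\pm 1\}$: then $\sin\upsilon_{\iota+}$ vanishes at a critical point of $\cos$ and the conjugation is singular there, so one must check that the extension $\upsilon_{\iota'}$, obtained by concatenating $\upsilon_{\iota+}$ with $\upsilon_{\iota-}=2\pi-\upsilon_{\iota+}$, remains smooth across the critical point. The key observation is that $\cos^{-1}\circ v_\iota\circ\cos$ has a natural reflection symmetry about each such critical point which is exactly matched by the concatenation, making the extended $\upsilon_{\iota'}$ an even function about the critical point. A direct Taylor expansion (for instance, if $v_\iota(1)=1$ then $v_\iota(\cos\theta)-1\sim -v'_\iota(1)\,\theta^2/2$, so $\upsilon_{\iota+}(\theta)\sim\sqrt{v'_\iota(1)}\,\theta$) shows the apparent singularity of $1/\sin\upsilon_{\iota+}$ cancels at every order against the vanishing of $\sin\theta$, with remainder terms algebraically controlled by the $C_n$. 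The analytic case (b) follows the same outline after shrinking $\zeta$ so that $\cos(\Lambda^{\beta_{\iota'}}_\zeta)\subset\check\Lambda_\delta$ while preserving uniform separation from the critical lines for non-$I_c$ branches; the local cancellation at critical points for $\iota'\in I_c$ persists by analytic continuation of the reflection-symmetric extension.
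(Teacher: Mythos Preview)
Your treatment of $h_\iota^{(n)}/h_\iota$ via Fa\`a di Bruno matches the paper's. For $\upsilon_\iota^{(n+1)}$ your outline is also sound---the case split on whether the endpoint $v_\iota(\sigma)$ hits $\partial\Lambda$, the appeal to partition spacing in the non-critical case, and the reflection-symmetry observation for $\iota'\in I_c$---but the critical-endpoint step is where your argument is thin. The leading-order Taylor expansion $\upsilon_{\iota+}(\theta)\sim\sqrt{v_\iota'(1)}\,\theta$ establishes smoothness of the concatenated $\upsilon_{\iota'}$, but does not by itself deliver quantitative, branch-uniform control of $\upsilon_{\iota'}^{(n+1)}$ in terms of $C_1,\dots,C_n$. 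Turning ``cancels at every order'' into an actual bound requires control of the higher derivatives of the ratio $\sin\theta/\sin\upsilon_{\iota+}(\theta)$, and your sketch does not say how; ``remainder terms algebraically controlled by the $C_n$'' is precisely the assertion to be proved.

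The paper supplies the missing device. For a critical endpoint $\sigma$ with $v_\iota(\sigma)=\tau\in\{\pm1\}$, the singular factor $\sqrt{(1-x\sigma^{-1})/(1-v_\iota(x)\tau^{-1})}$ is rewritten as $S_{\iota,\sigma}^{-1/2}$, where $S_{\iota,\sigma}(x)=(\tau-v_\iota(x))/(\sigma-x)$ is the divided difference of $v_\iota$. A one-line Taylor-remainder identity (Lemma~\ref{l:cheby_distortion_s}) gives $S_{\iota,\sigma}^{(n)}(x)=v_\iota^{(n+1)}(w)/(n+1)$ for some intermediate $w$, so $|S^{(n)}/S|$ is immediately controlled by $C_n e^{4C_1}$ and the cancellation becomes manifest rather than asymptotic. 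For a non-critical endpoint the paper uses $T_{\iota,\sigma}(x)=1-v_\iota(x)/\tau_{\iota,\sigma}$, whose lower bound $\Xi^{-1}|\O_\iota|$ (Lemma~\ref{l:cheby_distortion_t}) is exactly where the partition spacing enters, in both the real and analytic settings. The proof then establishes an explicit inductive formula expressing $\upsilon_\iota^{(n+1)}(\cos^{-1}x)$ as a polynomial combination of derivatives of $S_{\iota,\sigma}^{-1/2}$ (or $T_{\iota,\sigma}^{-1/2}$) and $v_\iota^{(s+1)}$, from which uniform bounds follow mechanically. Your approach becomes a proof once the divided-difference representation replaces the Taylor-expansion heuristic at the critical endpoints.
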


 Setting $\tilde \mu = [-\check{\lambda}^{-1},\check{\lambda}^{-1}]$ and $\tilde p = [-p,p]$, Lemma \ref{l:cheby_distortion} means we can apply Lemma \ref{l:abstract_entry} to each summand in (\ref{cheby_transfer_exp2}). Up to a constant factor $G$ to be discussed later we have 
that if $f$ satisfies (\ref{distortion_diff_frac}) then there exist $W_{r,n}$ such that for $j>pk\geq 0$,
\[ |L_{jk}| \leq 2\frac{t_j}{8\pi} G \sum_{n=0}^{r} \frac{W_{r,n} k^n}{(j - \check\lambda^{-1} k )^{n+r}}.\]
Similarly, if $f$ satisfies (\ref{distortion_ana}) there exists $\zeta' \in (0,\zeta]$ such that for $j>pk\geq 0$,
\[ |L_{jk}| \leq 2\frac{t_j}{8\pi} G e^{\zeta' (H_{1,\zeta} - (j - p k))} \]
which gives the decay rates stated in Theorem \ref{t:entry_np} by the same means as in the proof of Theorem \ref{t:entry_p}.

However, we need to check that the constant factor
\[ G = \sum_{\iota' \in I'} \int_0^{2\beta_{\iota'}\pi} |v'_\iota(\cos\theta)| d\theta \]
is in fact finite. 

We convert back to a sum over $I$ by collapsing the sum over $\pm$ for $\iota\in I\backslash I_c$, obtaining 
\[ G = 2 \sum_{\iota\in I} \int_0^{2\pi} |v'_\iota(\cos\theta)|d\theta. \]
We then make the two-to-one change of variable $x = \cos\theta$ to find that 
\begin{align*} G &= 4\sum_{\iota\in I} \int_{-1}^{1} |v'_\iota(x)|  \frac{1}{\sqrt{1-x^2}} dx 
\\ &\leq 4 \sum_{\iota\in I} \int_{-1}^{1} \frac{(1+2C_1) |\O_\iota|}{2\sqrt{1-x^2}} dx = 4\pi (1+2C_1) < \infty,\end{align*}
%
where the first inequality is a result of Lemma \ref{l:unp-consistency}(b).

This concludes the proof of Theorem \ref{t:entry_np}.
\qed \end{proof}

\begin{remark}\label{r:entry_unif}
	Elements of Fourier and Chebyshev transfer operator matrices are uniformly bounded, with 
	\[ |L_{jk}| \leq 1 \]
	for maps in $\bupd$ and
	\[ |L_{jk}| \leq (2-\delta_{j0}) (2+ 4C_1) 
	\]
	for maps in $\bunp$.
	
	This follows by applying Lemma \ref{l:abstract_entry}(a) in the proofs of Theorems \ref{t:entry_p}-\ref{t:entry_np}.
\end{remark}

\begin{remark}
	The uniform C-expansion condition (\ref{aue_definition}) \Rii{is the natural expansion condition for any choice of spectral basis on an interval}. Our reasoning is as follows. If one wishes to use oscillatory integral techniques on these basis functions as in Lemma \ref{l:abstract_entry}, it is best for the wavelength of the basis functions to be approximately spatially constant. 
	However, wavelengths of sufficiently high-order spectral basis functions on intervals will always be much smaller towards the endpoints. Potential theory \cite{Trefethen13} tells us that the optimal transformation to even out high-order basis functions across the interval is always the cosine transformation.
	\end{remark}

We now turn to proving the main theorem, Theorem \ref{t:main}, and its corollaries. Our idea is to perturb $\tro$ such that the associated coefficient matrix is block-upper-triangular (in the Fourier case, with the ordering of basis elements $e_0, e_{1}, e_{-1}, e_2,\ldots$). This isolates the top block $E_N \to E_N$, which then approximates the corresponding $E_N \times E_N$ block of the full, unperturbed transfer operator, yielding convergence on domain $E_N$.

We summarise this using the following lemma, \Rii{where we do not require our Banach space $E$ to be $BV$}.

\begin{lemma}\label{l:strong-convergence}
	\Rii{Let $E$ be a Banach space such that $\E_N := (\id - \prj_N) \tro \prj_N$ is an endomorphism on $E$.}
	
	Suppose $\tro$ has a spectral gap \Rii{on $E$}. Then
	\begin{equation} \| \tro_N - \tro |_{\spc_N} \|_\spc = \| \E_N\|_\spc \label{strong-convergence-transfer} \end{equation}
	and
	\begin{equation} \| \sch_N - \sch |_{\spc_N}  \|_\spc \leq \frac{\|\sch\|_\spc \| \E_N \|_\spc}{1 - \|\sch\|_\spc \| \E_N \|_\spc}. \label{strong-convergence-solution}\end{equation}
\end{lemma}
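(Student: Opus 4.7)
The plan is to dispose of the first equation by direct computation, and attack the resolvent estimate via an exact operator identity combined with a Neumann-type bootstrap. For the first equation, for any $\phi \in \spc_N$ one has $\prj_N \phi = \phi$, so $(\tro|_{\spc_N} - \tro_N)\phi = (\id - \prj_N)\tro \phi = (\id - \prj_N)\tro \prj_N \phi = \E_N \phi$. Since $\E_N$ factors through $\prj_N$, it vanishes on $\ker \prj_N$, and its operator norm on $\spc$ coincides with its norm restricted to $\spc_N$, yielding equality.

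For the inequality, the key observation is that on $\spc_N$ (using $\prj_N|_{\spc_N} = \id$ and $u \in \spc_N$) one has $\si|_{\spc_N} = \si_N - \E_N|_{\spc_N}$ as maps $\spc_N \to \spc$. For $\phi \in \spc_N$, applying $\si$ to $\sch\phi - \sch_N\phi$ and invoking $\si_N \sch_N = \id_{\spc_N}$ yields $\si(\sch - \sch_N)\phi = \phi - (\si_N - \E_N|_{\spc_N})\sch_N\phi = \E_N \sch_N \phi$; pre-multiplying by $\sch$ then gives the exact identity $(\sch - \sch_N)|_{\spc_N} = \sch \E_N \sch_N$. Taking operator norms bounds the left side by $\|\sch\|\|\E_N\|\|\sch_N\|$, and to control $\|\sch_N\|$ I would rewrite the same identity as $\sch_N = \sch|_{\spc_N} - \sch \E_N \sch_N$ and apply the triangle inequality, obtaining $(1-\|\sch\|\|\E_N\|)\|\sch_N\| \leq \|\sch\|$. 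Under the smallness hypothesis $\|\sch\|\|\E_N\| < 1$ (which is built into the stated bound), this bootstraps to a Neumann-style estimate on $\|\sch_N\|$; substituting into the previous inequality delivers the claimed estimate.

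The main delicacy is bookkeeping the domain/codomain structure: $\sch$ and $\si$ act on all of $\spc$, while $\sch_N$ and $\si_N$ are endomorphisms of $\spc_N$, and $\E_N$ sends $\spc$ into $(\id - \prj_N)\spc$. The crucial relation $\si|_{\spc_N} = \si_N - \E_N|_{\spc_N}$ holds as an identity between maps $\spc_N \to \spc$, not as an identity between endomorphisms of $\spc_N$; applying it consistently (and ensuring that the perturbation $u \ti$ is unaffected since $u \in \spc_N$) requires care. Once this framework is set up, the remaining Neumann-type bootstrap is standard perturbation theory.
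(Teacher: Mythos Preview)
Your proof is correct and rests on the same Neumann-perturbation idea as the paper, but the packaging is different. The paper introduces an auxiliary operator $\tilde{\sch}_N := (\si + \E_N)^{-1}$ on all of $\spc$, observes that $\si + \E_N$ is block upper-triangular with respect to $\spc = \spc_N \oplus \ker\prj_N$, reads off $\tilde{\sch}_N|_{\spc_N} = \sch_N$ from the block inverse, and performs the Neumann estimate $\|\tilde{\sch}_N - \sch\| \leq \|\sch\|\|\E_N\|/(1-\|\sch\|\|\E_N\|)$ on the full space. You instead stay on $\spc_N$ and derive the exact identity $(\sch - \sch_N)|_{\spc_N} = \sch\,\E_N\,\sch_N$ directly, then bootstrap $\|\sch_N\|$ from it. Your route is a bit more economical: it avoids the block-matrix bookkeeping and never needs to name $\tilde{\sch}_N$. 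The paper's route has one small advantage you might want to note: by showing $\si + \E_N$ is invertible on $\spc$ via Neumann series and then reading off the block structure, it establishes the invertibility of $\si_N$ (hence existence of $\sch_N$) for $N$ large as a byproduct, whereas your bootstrap tacitly assumes $\sch_N$ is already defined and bounded. A one-line remark covering this would make your argument self-contained.
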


\begin{proof}
	The first equality (\ref{strong-convergence-transfer}) arises simply because 
	$\tro_N - \tro |_{\spc_N}  = (\id-\prj_N)\tro_N |_{\spc_N}  = \E_N$.
	
	Let 
	\[\tilde{\tro}_N := \tro - \E_N = \tro_N + \tro (\id - \prj_N).\] 
	Recalling that we defined $\ti$ to be the Lebesgue integral functional and $u$ an element of $\spc_N$ with $\ti u = 1$, let us also define
	$ \tilde{\sch}_N = (\id - \tilde{\tro}_N + u \ti)^{-1}$.
	If $\|\E_N\|$ is small enough, this is well-defined, since $\tilde{\sch}_N =  (\id + \sch \E_N)^{-1} \sch$ and thus 
	\begin{equation} \|\tilde{\sch}_N - \sch\| \leq 	\frac{\|\sch\| \|\E_N\|}{1-\|\sch\| \|\E_N\|}.\label{strong-convergence-solution-bound}\end{equation}
	
	For $\phi \in \spc_N$, we have that 
	\[ \tilde{\sch}_N^{-1} \phi = \phi - \tilde{\tro}_N \phi + u \ti \phi = \phi - \tro_N \phi + u \ti \phi \in \spc_N, \]
	and thus $\tilde{\sch}_N^{-1} |_{\spc_N}$ is an endomorphism on $\spc_N$, is equal to $\sch_N^{-1}$. Consequently, $\sch_N |_{\spc_N} = \tilde{\sch}_N |_{\spc_N}$, which combined with (\ref{strong-convergence-solution-bound}) yields as desired (\ref{strong-convergence-solution}).
\qed\end{proof}

The following lemma is then required to connect $\|\E_N\|_{BV}$ to spectral matrix coefficients.
	\begin{lemma}\label{l:bvnorm}
		Suppose $\mathcal{F}: BV(\Lambda)\to BV(\Lambda)$ is an operator for $\Lambda$ either $[0,2\pi)$ or $[-1,1]$. Let the matrix $D= (k \delta_{jk})_{j,k\in\Z}$ and $\check D = (k\delta_{(j-1)k})_{j,k\in\N}$. 
		
		If $\mathcal{F}$ has Fourier coefficient matrix $F$, then
		\begin{equation} \|\mathcal{F}\|_{BV} \leq 2\pi (\left\| D F \|_{\ell^2} + \|F\|_{\ell^2}\right).\label{l-bvnorm-four}\end{equation}
		
		Similarly, if $\mathcal{F}$ has Chebyshev coefficient matrix $F$, then
		\begin{equation} \|\mathcal{F}\|_{BV} \leq 2\pi (\left\| \check C \check D F \check C^{-1} \|_{\ell^2} + \|\check C F \check C^{-1}\|_{\ell^2}\right),\label{l-bvnorm-cheb}\end{equation}
		where $\check C = (t_k^{-1/2}\delta_{jk})_{j,k\in\N}$.
	\end{lemma}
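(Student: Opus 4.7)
The core idea is to use Parseval's theorem to convert $L^2$-based estimates on $\mathcal{F}\phi$ and its derivative into $\ell^2$ estimates on the spectral coefficient vector $Fa$, where $a$ is the coefficient vector of $\phi$. Since the BV norm is controlled by an $L^1$ (or $L^\infty$) component and a $\mathrm{TV}(\mathcal{F}\phi)=\|(\mathcal{F}\phi)'\|_{L^1}$ component---each of which is dominated by the corresponding $L^2$ norm via H\"older's inequality---this produces bounds of the form $\|Fa\|_{\ell^2}+\|DFa\|_{\ell^2}$ times a constant, and hence matrix-norm bounds after pulling out $\|a\|_{\ell^2}$, which is itself bounded by $\|\phi\|_{BV}$ via the continuous embeddings $BV \hookrightarrow L^\infty \hookrightarrow L^2$.

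For the Fourier case (\ref{l-bvnorm-four}), I would expand $\phi = \sum_k a_k e_k$, giving $\mathcal{F}\phi = \sum_j (Fa)_j e_j$ and $(\mathcal{F}\phi)'=\sum_j ij(Fa)_j e_j$. Parseval yields $\|\mathcal{F}\phi\|_{L^2}=\sqrt{2\pi}\|Fa\|_{\ell^2}$ and $\|(\mathcal{F}\phi)'\|_{L^2}=\sqrt{2\pi}\|DFa\|_{\ell^2}$, and the two BV components bound as $\mathrm{TV}(\mathcal{F}\phi)\leq\sqrt{2\pi}\|(\mathcal{F}\phi)'\|_{L^2}=2\pi\|DFa\|_{\ell^2}$ and analogously for the $L^1$ part. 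Summing these two pieces, interpreting $\|\cdot\|_{\ell^2}$ as the $\ell^2\to\ell^2$ operator norm so that $\|Fa\|_{\ell^2}\leq\|F\|_{\ell^2}\|a\|_{\ell^2}$, and using $\|a\|_{\ell^2}\leq\|\phi\|_{BV}$ from the embedding above completes the proof.

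For the Chebyshev case (\ref{l-bvnorm-cheb}), I would reduce to the Fourier case through the substitution $x=\cos\theta$. Setting $\tilde\phi(\theta)=\phi(\cos\theta)$ and extending evenly to $[0,2\pi)$, the Fourier coefficients of $\tilde\phi$ are related to the Chebyshev coefficients of $\phi$ through the diagonal rescaling $\check C=\operatorname{diag}(t_k^{-1/2})$, which accounts for the different $L^2$-normalisations of $T_k(x)$ under $(1-x^2)^{-1/2}\,dx$ and of $\cos(k\theta)$ under $d\theta$. The total variation transforms as $\mathrm{TV}_{[-1,1]}(\phi) = \tfrac{1}{2}\mathrm{TV}_{[0,2\pi)}(\tilde\phi)$ by monotonicity and symmetry of $\cos$. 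The induced operator on the Fourier side has matrix $\check C F \check C^{-1}$, and the shifted matrix $\check D$ replaces $D$ because $\phi'=\sum_{k\geq 1} k a_k U_{k-1}$ naturally indexes the coefficients of $\phi'$ (in the $U_k$ basis) one step higher than those of $\phi$ in the $T_k$ basis; this is exactly what the offset $\delta_{(j-1)k}$ in the definition of $\check D$ encodes. Applying the Fourier bound of the previous step with these substitutions yields (\ref{l-bvnorm-cheb}).

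The principal obstacle is bookkeeping: the indexing conventions, the factor of $1/2$ from the symmetric extension, the diagonal weights $t_k$, and most delicately the shift in $\check D$ arising from the mismatch between the $T_k$ and $U_k$ bases for $\phi$ and $\phi'$. None of these involves deep mathematical content, but they must all be handled simultaneously for the matrix forms $\check C \check D F \check C^{-1}$ and $\check C F \check C^{-1}$ to emerge exactly as stated, with the constant $2\pi$ depending on the precise normalisation chosen for $\|\cdot\|_{BV}$.
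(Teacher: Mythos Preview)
Your proposal is correct and follows essentially the same route as the paper: sandwich $\|\mathcal{F}\|_{BV}$ between an $L^2\to H^1$-type bound (using $\mathrm{TV}\leq C\|(\cdot)'\|_{L^2}$ on the range and $BV\hookrightarrow L^2$ on the domain), apply Parseval/Plancherel to pass to $\ell^2$ matrix norms, and for the Chebyshev case reduce to the Fourier case via the substitution $x=\cos\theta$ acting as an isometry on the relevant weighted spaces. The paper phrases the Fourier step as a single operator-norm inequality $\|\mathcal{F}\|_{BV}\leq 2\pi\|\mathcal{F}\|_{L^2\to H^1}$ rather than working vector-by-vector, and in the Chebyshev case derives the shift in $\check D$ from $\mathcal{C}^{-1}\mathcal{D}\mathcal{C}\,T_k = k\sin(k\cos^{-1}x)$ (differentiation in $\theta$, landing in the sine family) rather than from $T_k'=kU_{k-1}$ (differentiation in $x$); these differ by a Jacobi factor $\sqrt{1-x^2}$, so be careful when you do the bookkeeping, but the underlying mechanism is the same.
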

\begin{proof}
	Consider first the Fourier case. Then since $\frac{1}{\sqrt{2\pi}}\|\cdot\|_{L_2} \leq \|\cdot\|_{BV} \leq \sqrt{2\pi} \|\cdot\|_{H^1}$, 
	\begin{equation} \|\mathcal{F}\|_{BV} \leq 2\pi\|\mathcal{F}\|_{L_2 \to H^1} = 2\pi\left( \|\mathcal{D}\mathcal{F}\|_{L_2} + \|\mathcal{F}\|_{L_2}\right).  \label{l-bvnorm-1} \end{equation}
	By the Plancherel equality, $\|\mathcal{D}\mathcal{F}\|_{L_2} = \|DF\|_{\ell^2}$ and $\|\mathcal{F}\|_{L_2} = \|F\|_{\ell^2}$. This gives the required bound in (\ref{l-bvnorm-four}).
	
	Consider instead the Chebyshev case. Define the Jacobi weight function $\mathfrak{j}(x) = \sqrt{1-x^2}$, and the Sobolev spaces $\check H^k \subset L_2([-1,1],1/\mathfrak{j})$, $k\geq0$ with norm
	\begin{equation} \|\phi\|_{\check H^k} = \sum_{n=0}^k \int_{-1}^1 \mathfrak{j}^{2n-1} |\phi^{(n)}|^2 dx. \label{cheby-sobolev-spaces}\end{equation}
	Note that $\check H^0=L^2([-1,1],1/\mathfrak{j})$.

	If $G$ is the set of even functions on $\R/2\pi\Z$, simple trigonometric manipulations show that the operator $\mathcal{C}: \phi \mapsto \frac{1}{2} \phi\circ\cos$ is an isometry from $\check H^k$ to $G \cap H^k([0,2\pi))$ and similarly from $BV([-1,1])$ to $G\cap BV([0,2\pi))$. Thus,
	\begin{align*} \|\mathcal{F}\|_{BV([-1,1])} &= \|\mathcal{CFC}^{-1}\|_{G\cap BV([0,2\pi))} \\&
	\leq 2\pi\left( \|\mathcal{D}\mathcal{CFC}^{-1}\|_{G\cap L_2} + \|\mathcal{CFC}^{-1}\|_{G \cap L_2}\right),   \end{align*}
	where the inequality comes from (\ref{l-bvnorm-1}). We can then convert back to $\check H^0$ to get the inequality
	\[ \|\mathcal{F}\|_{BV([-1,1])} \leq 2\pi \left( \|\mathcal{C}^{-1}\mathcal{DC}\mathcal{F}\|_{\check H^0} + \|\mathcal{F}\|_{\check H^0}\right) .\]
	
	We can convert these operator norms into matrix norms as follows. The Chebyshev polynomial basis is an orthogonal basis for $\check H^0$ with $\|T_k\|_{\check H^0} = \sqrt{\pi/t_k}$ and furthermore 
	the functions
	\[ \mathcal{C}^{-1}\mathcal{DC} T_k = k\sin(k\cos^{-1}x) \]
	are orthogonal in $\check H^0$ with norms $k \sqrt{\pi/t_k}$ respectively. The resulting Plancherel equality results in (\ref{l-bvnorm-cheb}).
\qed\end{proof}

We now have the requisite results to tie together to prove Theorem \ref{t:main}.

\begin{proof}[
	Theorem \ref{t:main}]
	Maps in $\upd$ have a spectral gap in $BV$ as they are uniformly expanding with bounded distortion. Since maps in $\unp$ have a forward iterate that is uniformly expanding with bounded distortion by Theorem \ref{t:conjugacy} in Appendix \ref{a:expanding-conjugacy}, they also have a spectral gap in $BV$.
	
	Suppose $E_N$ is the Fourier coefficient matrix of $\E_N$ and the expansion coefficient of the associated map $f$ is $\lambda > 1$. Then given $p \in (\lambda,1)$, there exists an appropriate spectral decay function $K$ such that when $|j|\geq |k|$,
	\[ |L_{jk}| \leq K(|j|-p|k|). \]
	
	Now suppose $f$ satisfies (\ref{distortion_diff_frac}) for some $r \geq 2$. Then $K(M) = C M^{-r}$ for some $C > 0$, and so
	\begin{align*} \|E_N\|_{\ell^2}^2 &\leq  \sum_{k=-N}^N\sum_{j=N+1}^{\infty} \left(|L_{jk}|^2 + |L_{-jk}|^2\right) \\
		&\leq \sum_{k=-N}^N \sum_{j=N+1}^{\infty} 2 C^2 (j-p|k|)^{-2r} \\
		& \leq \frac{2C^2}{2r-1}  \sum_{k=-N}^N (N - p|k|)^{1-2r},
	\end{align*}
	by converting to an integral. We can then take the supremum of the summands to obtain
	\[ \sum_{k=-N}^N (N - p|k|)^{1-2r} \leq (2N+1) (N - pN)^{1-2r} \leq \frac{3}{(1-p)^{2r-1}} N^{2-2r}\]
	and thus 
	\[ \|E_N\|_{\ell^2}^2 \leq \frac{6}{(2r-1)(1-p)^{2r-1}} N^2 K(N)^2. \]
	Similarly,
	\[ \|D E_N\|_{\ell^2}^2 \leq \frac{6}{(2r-2)(1-p)^{2r-2}} N^3 K(N)^2, \]
	where $D$ is as in Lemma \ref{l:bvnorm}.
	
	Hence as a result of Lemma \ref{l:bvnorm}, there exists a function $K'\in \kappa(\DD_r)$ such that $\|\E_N\| \leq N\sqrt{N} K'(N)$.
	
	Suppose $f$ instead satisfies (\ref{distortion_ana}). Then for some $\zeta \in (0,\delta]$ there exists $p>1$ such that for all $|j|\geq|k|$, $L_{jk} \leq C e^{-\zeta(|j|-p|k|)}$. Consequently,
	\[ \|E_N\|_{\ell^2}^2 \leq \sum_{k=-N}^N\sum_{j=N+1}^{\infty}2e^{-2\zeta(j-p|k|)} \leq \frac{4N}{\zeta^2} e^{-2\zeta(1-p)N} \]
	with a comparable result for $DE_N$. Thus, there exists a function $K'\in \kappa(\DA_\delta)$ such that
	\[ \|\E_N\| \leq N K'(N) \leq N\sqrt{N} K'(N). \]
	
	Similarly, we get the same results up to constants in the Chebyshev case: the $C$ matrices are unproblematic as $\|C\|_{\ell^2} = 1$ and $\|C^{-1}\|_{\ell^2} = 2$. 
	
	We therefore have by Lemma \ref{l:strong-convergence} that if $f$ satisfies some distortion condition $(\D)$ then there exists $K' \in \kappa(\D)$ such that for any $N$ and $\phi$ in $\tro_N$,
	\[ \|\tro_N \phi - \tro \phi \|_{BV} \leq N\sqrt{N} K'(N) \| \phi\|_{BV} \]
	and if $N$ is sufficiently large,
	\[ \|\sch_N \phi - \sch \phi\|_{BV} \leq \frac{\|\sch\|_{BV} N\sqrt{N} K'(N)}{1 - \|\sch\|_{BV} N \sqrt{N} K'(N)} \leq 2N\sqrt{N} \|\sch\|_{BV} K'(N) \|\phi\|_{BV}, \]
	which is what was required for Theorem \ref{t:main}.
	\qed\end{proof}
	
	Corollary \ref{c:acim} is a direct result of this convergence and Theorem \ref{t:solution}:
	\begin{proof}[
		Corollary \ref{c:acim}]
		We know from Theorem \ref{t:solution} that $\ac = \sch u$. We have also defined $\ac_N = \sch_N u$, recalling that $u$ lies in $E_N$. As a result, by Theorem \ref{t:main},
		\[ \| \ac_N - \ac \|_{BV} = \| \sch_N u - \sch u \|_{BV} \leq N\sqrt{N} \bar{K}(N) \|u\|_{BV}, \]
		as required.
	\qed\end{proof}
	Note that here, unlike in Theorem \ref{t:main}, we actually have that estimates converge in norm to the true values.
	
	Corollary \ref{c:sum} also follows directly from Theorems \ref{t:solution} and \ref{t:main}.
	\begin{proof}[
		Corollary \ref{c:sum}]
		We know from Theorem \ref{t:solution} that on $\zsp$, the space of zero integral functions, the solution operator $\sch$ is identical to $\sum_{n=0}^{\infty} \tro^n$. We then need only apply the second part of Theorem \ref{t:main} to get the required inequality.
	\qed\end{proof}	
	

\begin{remark}\label{r:optimal-zeta}
	When a map satisfies $($\ref{distortion_ana}$)$, one might be interested in the best rate of decay one can get for $\|\E_N\|_{BV}$, which controls the convergence of estimates. In the non-periodic case one can show that 
	\begin{equation} \lim_{N\to\infty} \frac{1}{N} \log\|\E_N\|_{BV} = \sup_{\iota\in I, x \in [0,2\pi]} |\Im \upsilon_\iota(x+i\zeta)| - \zeta. \label{analytic-error}\end{equation}
	The value of $z$ where the supremum in (\ref{analytic-error}) is maximised will have $\Im \upsilon_\iota'(z) = 0$; if this value of $z$ varies continuously with $\zeta$, then it will have a maximum when $|\Re \upsilon_\iota'(z)|$ is $1$ or $-1$.
	Thus, one expects the right-hand side of (\ref{analytic-error}) to be maximised for
			\begin{equation} \zeta = \min\left\{\inf \left\{ |\Im z| \mid z \in (\upsilon_\iota')^{-1}(\{\pm 1\}), \iota\in I \right\},\delta\right\}. 
			\label{zeta_optimal} \end{equation}
		
			The result is the same in the periodic case but with $v_\iota$ substituted for $\upsilon_\iota$.)
\end{remark}

\section*{Acknowledgements}

The author wishes to thank Georg Gottwald for his support, advice and comments on the manuscript. The author would also like to thank Sheehan Olver for discussions on spectral methods and ApproxFun, and Ian Melbourne, Viviane Baladi, Maria Jose Pacifico, Peter Koltai for their advice, references and illuminating discussions.

 The author was supported by a Research Training Program scholarship, and by the Erwin Schr\"odinger Institute, Vienna during a visit in May 2016.

\begin{appendix}
	\section{The relationship between uniform expansion and uniform C-expansion}\label{a:expanding-conjugacy}

%
In this paper we have stipulated that maps on non-periodic domains satisfy a so-called uniform C-expansion condition rather than the usual uniform expansion condition. Neither of these conditions imply the other: in fact it is not hard to construct non-pathological examples of uniformly-expanding maps which are not uniformly C-expanding (see Figure \ref{f:expanding-conjugacy-1}).

\begin{figure}[htbp]
	\centering
	\includegraphics[width=80mm
	]{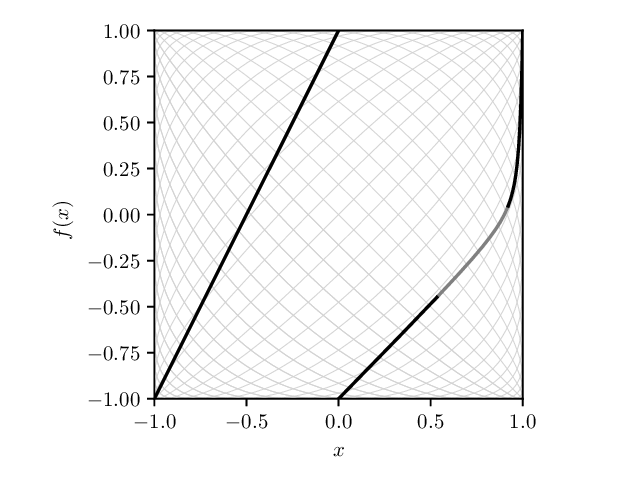}
	\caption{In black, an example of a map $f\in\uunp$ ($\lambda = 0.98^{-1}$) which is not uniformly C-expanding ($\check\lambda \approx 0.763$). Non-C-expanding parts of $f$ marked in mid grey. In light grey, lines of unit C-expansion (i.e. curves $\psi(x)$ for which $(\cos^{-1}\circ\psi\circ\cos)' = \pm 1$).}
	\label{f:expanding-conjugacy-1}
\end{figure}

However in practice, maps in $\uunp$ are generally also in $\unp$. For example, all piecewise linear maps in $\uunp$ lie in $\unp$. (In particular, if $f$ is the $k$-tupling map, the uniform C-expansion parameter for $f$ is $\check{\lambda} = \sqrt{k}$.) 
A map in $\uunp$ typically fails to be in $\unp$ if its graph ``aims'' towards the walls of the domain. To preserve the Markov structure, such a heading must be facilitated by a kink in the map. This is illustrated in Figure \ref{f:expanding-conjugacy-1}. 

However, if we now consider only maps that are Markov with bounded distortion, we find close connections between C-expansion and classical expansion. In fact, a positive lower bound on one implies a positive lower bound on the other, which may be seen by an adaptation of the proof of Theorem \ref{t:conjugacy} below.
%

Importantly, uniformly C-expanding maps eventually become uniformly expanding under iteration and vice versa, according to the following theorem.
\begin{theorem}\label{t:conjugacy}
	Suppose $f \in \uunp$ (resp. $f \in \unp$).
%
	Then there exists $n_* \in \N$ such that $f^n \in \unp$ (resp. $\uunp$) for all $n\geq n_*$. Each $f^n$ satisfies the same distortion conditions as $f$, with possibly different constants.
\end{theorem}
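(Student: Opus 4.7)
The plan is to use the cosine conjugacy to fuse the two directions of the theorem into one claim. Define $\tilde f = \cos^{-1}\!\circ f\circ\cos : [0,\pi]\to[0,\pi]$, constructed branch-by-branch using the Markov structure of $f$. A direct computation gives
\[|\tilde f'(\theta)| = \frac{\sqrt{1-\cos^2\theta}}{\sqrt{1-f(\cos\theta)^2}}\,|f'(\cos\theta)|,\]
so classical uniform expansion of $\tilde f$ coincides exactly with uniform C-expansion of $f$. Because the conjugacy is an involution that commutes with iteration ($\widetilde{f^n}=\tilde f^n$), the direction $\unp\Rightarrow\uunp$ follows from the direction $\uunp\Rightarrow\unp$ applied to $\tilde f$, once one knows that the Markov and bounded-distortion structures transfer under the conjugacy---a calculation of the same flavour as Lemma \ref{l:cheby_distortion}. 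So it suffices to prove: if $|f'|\geq\lambda>1$ and $f$ has bounded distortion, then $f^n$ is uniformly C-expanding for all $n$ large.

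The master identity, obtained by telescoping the chain rule, is
\begin{equation*}
\log\chi_n(x):=\log\biggl(\frac{\sqrt{1-x^2}}{\sqrt{1-f^n(x)^2}}\,|(f^n)'(x)|\biggr) = \sum_{k=0}^{n-1}\log|f'(f^k x)| + \tfrac12\log\frac{1-x^2}{1-f^n(x)^2}.
\end{equation*}
For $x$ in the bulk region $|x|\le 1-\delta$ the final term is bounded below by $\tfrac12\log(1-(1-\delta)^2)=:-C_\delta$ (since $1-f^n(x)^2\le 1$), while the sum is $\ge n\log\lambda$; thus $\log\chi_n(x)\ge n\log\lambda - C_\delta > 0$ for $n$ sufficiently large.

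For $x$ with $|x|>1-\delta$, the Markov property forces $f(\{\pm1\})\subseteq\{\pm1\}$, making the boundary orbit eventually periodic. Working on the branch $\O_R$ containing $1$ and writing $\lambda_R$ for the average derivative magnitude over the eventual period (equal to $|f'(1)|$ in the fixed-point case), bounded distortion yields $(1-f(y))/(1-y)\in[c^{-1}\lambda_R,c\lambda_R]$ for $y$ close to $1$, with analogous formulas at $-1$ and for pre-periodic or period-$2$ orbits. So long as the orbit of $y$ remains in some fixed linear-regime neighbourhood $[1-\delta_0,1]$, iteration gives $\sum\log|f'(f^k y)|\approx n\log\lambda_R$ and $\tfrac12\log((1-y^2)/(1-f^n(y)^2))\approx -\tfrac n2\log\lambda_R$, summing to $\log\chi_n(y)\gtrsim\tfrac n2\log\lambda_R>0$.

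For a generic near-boundary $x$ whose orbit exits the linear-regime neighbourhood at a first time $k\le n$, split $\log\chi_n(x)=\log\chi_k(x)+\log\chi_{n-k}(f^k x)$ and apply the linear-regime bound to the first summand ($\ge\tfrac k2\log\lambda_R - O(1)$) and the bulk bound to the second ($\ge (n-k)\log\lambda - C_\delta$). The combined estimate $\tfrac k2\log\lambda_R+(n-k)\log\lambda - O(1)$ is linear in $k$, hence minimised at $k=0$ or $k=n$; both endpoints give a quantity of order $n$, so the bound is positive for all large $n$ uniformly in $x$ and $k$. Propagation of the distortion conditions (\ref{distortion_diff_frac}) and (\ref{distortion_ana}) to $f^n$ with new constants is a standard chain-rule calculation. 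The main obstacle is selecting the linear-regime radius $\delta_0$ in a way that is uniform and robust against the bounded-distortion bookkeeping around the exit time $k$; once $\delta_0$ is fixed independently of $n$, all the estimates above glue cleanly.
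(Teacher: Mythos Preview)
Your symmetry reduction has a genuine gap. For any branch of $f$ with an interior endpoint $a\in(-1,1)$ (so $f(a)=\pm1$ but $a\neq\pm1$), one computes
\[
|\tilde f'(\theta)|=\frac{\sin\theta}{\sqrt{1-f(\cos\theta)^2}}\,|f'(\cos\theta)|\longrightarrow\infty\quad\text{as }\theta\to\cos^{-1}a,
\]
since the numerator tends to $\sqrt{1-a^2}\neq0$ while the denominator tends to $0$. Hence the inverse branches $\upsilon_\iota$ have $\upsilon_\iota'\to0$ at $\theta=0,\pi$, and $|\upsilon_\iota''/\upsilon_\iota'|$ is unbounded: $\tilde f$ fails (\ref{distortion_basic}). (Lemma~\ref{l:cheby_distortion} bounds $|\upsilon_\iota''|$, not the distortion ratio.) So you cannot feed $\tilde f$ back into the $\uunp\Rightarrow\unp$ direction, and the $\unp\Rightarrow\uunp$ direction needs its own argument.

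The paper avoids your bulk/boundary dichotomy altogether by exploiting the full-branch Markov structure directly. For $\uunp\Rightarrow\unp$: writing $g=f^n$, on the branch through $x$ the chord from $x$ to the branch-preimage of the appropriate endpoint has slope $1/|g'(w)|$ for some $w$ in the same branch (mean value theorem), and bounded distortion of $g$ gives $|g'(w)|\ge e^{-2C_1}|g'(x)|$; this yields in one stroke
\[
\sqrt{\tfrac{1-x^2}{1-g(x)^2}}\,|g'(x)|\ \ge\ \sqrt{\tfrac12\,e^{-2C_1}\lambda^n},
\]
uniformly in $x$. For $\unp\Rightarrow\uunp$: choose $n$-companions $y,z$ of $x$ with $f^n y=-1$, $f^n z=1$; the mean value theorem gives $|(f^n)'(w)|=2/|y-z|$, and C-expansion contracts $\cos^{-1}$-distances so $|y-z|\le\pi\check\lambda^{-n}$, whence $|(f^n)'(w)|\ge\tfrac{2}{\pi}\check\lambda^n$; distortion then transfers from $w$ to $x$ with a loss bounded by $\exp\!\big(C_1\sum_j|w_j-x_j|\big)$, and the summands decay geometrically since $|w_j-x_j|\le\pi\check\lambda^{j-n}$.

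Your telescoping argument for $\uunp\Rightarrow\unp$ is salvageable---the $O(1)$ in the linear regime really is uniform in $k$, because the per-step errors are of size $1-f^jx$ and sum geometrically---but it is considerably more delicate than the paper's two-line chord argument, and you would still owe a separate proof for the other direction.
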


\Rii{

\begin{remark}
	Since iterates of a map have an exponentially growing number of branches, for computational purposes it may be more effective simply to compute a conjugacy of a map which is C-expanding.

	It is in fact possible to construct, for a map $f \in \uunp$ (resp. $\unp$), an analytic diffeomorphism $\eta_f$ such that $f_c = \eta_f\circ f \circ \eta_f^{-1} \in \unp$  (resp. $\uunp$).
	
	
	Furthermore, if $f$ satisfies $($\ref{distortion_diff_frac}$)$ then so will $f_c$, and if $f$ satisfies $($\ref{distortion_ana}$)$ for some $\delta > 0$ then there exists $\delta' > 0$ for which $f_c$ satisfies $(\DA_{\delta'})$.
\end{remark}
}

Thus, maps in $\uunp$ and in $\unp$ have the same dynamical properties and can additionally be converted from one class to the other. 
We emphasise that the crucial assumption here is bounded distortion. 
%
\\

We now prove the results stated above, beginning with Theorem \ref{t:conjugacy}.

\begin{proof}[
	Theorem \ref{t:conjugacy}]
	
	Suppose $f \in \uunp$ with $|f'|>\lambda$ and distortion constant $C_1$. Then $f^n \in \uunp$ with $|(f^n)'| > \lambda^n$ and distortion constant bounded by $C_1\frac{1-\lambda^{-n-1}}{1-\lambda^{-1}}$ \cite{Gouezel04}. Let us use the notation $f^n = g$ with branches $\mathcal{P}_\iota, \iota \in I^n$. 
	
	Suppose $x \in \mathcal{P}_{\iota}$ for some $\iota \in I^n$. Then 
		\[ \frac{1-|x|}{|\sgn(xg'(x)) - g(x)|} \geq \frac{v_{\iota_x}(\sgn(xg'(x))) - x}{|g(v_{\iota_x}(\sgn(xg'(x)))) - g(x)|}, \]
		and by the intermediate value theorem there exists $w \in \mathcal{P}_\iota$ such that 
		\[ \frac{v_{\iota_x}(\sgn(xg'(x))) - x}{|g(v_{\iota_x}(\sgn(xg'(x)))) - g(x)|} = \frac{1}{|g'(w)|}.\]
		Using Lemma \ref{l:unp-consistency}(a) we find that 
		\[ \frac{1}{|g'(w)|} > \frac{e^{-2C_1}}{|g'(x)|},\]
		and so for all $x \in \cup_{\iota\in I} \O_\iota$,
		\[\sqrt{\frac{1-x^2}{1-(g(x))^2}}|g'(x)| \geq \sqrt{\frac{1+|x|}{|\sgn(xg'(x)) + g(x)| }e^{-2C_1} |g'(x)|} \geq \sqrt{\frac{1}{2} e^{-2C_1}\lambda^n} > 1\] 
	for $n$ sufficiently large.
	
	The map $f^n$ is full-branch Markov with bounded distortion, by the above satisfies (\ref{aue_definition}), and by Lemma \ref{l:ppc-composition} satisfies (\ref{ppc}). Consequently, $f^n \in \unp$.
	\\
	Now, suppose $f \in \unp$ and let $n \in \N^+$. 
	For the remainder of this proof, we will use subscript notation for forward iterates: $x_n = f^n(x)$. We will additionally call two points $x, y \in \Lambda$ {\it $n$-companions} if there exists a sequence $(\iota_{j})_{j=1,\ldots, n}$ such that $x_{j-1}, y_{j-1} \in \O_{\iota_j}$ for $j\leq n$.
	
	 Given $x \in \Lambda$, choose $y, z$ such that $x$, $y$ and $z$ are $n$-companions, $y_n = -1$ and $z_n = 1$. Then by the mean value theorem, there exists $w$ between $y$ and $z$ such that
	\begin{equation} |(f^n)'(w)| = \frac{|z_n-y_n|}{|z -y|} \geq \frac{2}{\pi} \check{\lambda}^n. \label{t-conj-fnw} \end{equation}
	
	Now, since $w$ lies between $y$ and $z$, it is an $n$-companion of $x$, $y$ and $z$. We will therefore relate $|(f^n)'(w)|$ to $|(f^n)'(x)|$ using bounded distortion.
	
	We expand their quotient out using the chain rule and rewrite:
	\begin{align} \frac{|(f^n)'(x)|}{|(f^n)'(w)|} &= \prod_{j=1}^{n} \frac{|f'(x_{j-1})|}{|f'(w_{j-1})|} \notag \\
		&= \prod_{j=1}^{n} \frac{|v'_{\iota_j}(x_j)^{-1}|}{|v'_{\iota_j}(w_j)^{-1}|} \notag \\
		&=  e^{\sum_{j=1}^{n}\left(\log |v'_{\iota_j}(w_j)| - \log |v'_{\iota_j}(x_j)|\right)} \notag\\
		&\geq e^{-\sum_{j=1}^{n}\left|\log |v'_{\iota_j}(w_j)| - \log |v'_{\iota_j}(x_j)|\right|}.\label{t-conj-vsum} \end{align}
	
	 We then bound the summands using (\ref{distortion_basic}) and the fact that $v''_\iota/v'_\iota = (\log |v'_\iota|)'$:
	\[ \left|\log |v'_{\iota_j}(w_j)| - \log |v'_{\iota_j}(x_j)|\right| \leq C_1 |w_j - x_j| \leq C_1 \check{\lambda}^{j-n} \pi.\]
	 The sum in (\ref{t-conj-vsum}) can thus be collapsed to give
	\[ \frac{|(f^n)'(x)|}{|(f^n)'(w)|} \geq e^{-\sum_{j=1}^n C_1 \check{\lambda}^{j-n} \pi} > e^{-C_1 \pi (1-\check\lambda)^{-1}}.\] 
	
	Combining this with (\ref{t-conj-fnw}) gives us that
	\[ |(f^n)'(x)| \geq e^{-C_1 \pi (1-\check\lambda)^{-1}} \frac{2}{\pi} \check{\lambda}^n, \]
	which implies that $f^n$ is uniformly expanding for sufficiently large $n$. Since as before $f^n$ satisfies all the non-expansion conditions to be in $\uunp$, we have that $f^n \in \uunp$.
\qed\end{proof}
	\section{Results on conditions (\ref{distortion_basic}) and (\ref{ppc})}\label{a:distortion}


In this appendix, we 
prove some properties possessed by maps in $\unp$ used through the rest of the paper. We first
give some ``non-local'' properties of the bounded distortion condition (\ref{distortion_basic}), and then prove that (\ref{ppc}) is preserved under iteration.
%
%

We first prove a lemma relating bounded distortion constants to bounds on derivatives of the map. The properties summarised in Lemma \ref{l:unp-consistency} are mostly standard, but we improve the upper bound in part (b) from the exponentially large $e^{2C_1}$ to a computationally more useful $1+2C_1$.

\begin{lemma}\label{l:unp-consistency}
	Suppose $f: [-1,1] \to [-1,1]$ is full-branch Markov with bounded distortion. Suppose the distortion constant of $f$ is $C_1$. Then for all $\iota\in I$: 
	\begin{enumerate}[(a)]
	\item For all $x,w \in [-1,1]$,
	\[ e^{-2C_1} \leq \frac{|v_\iota'(x)|}{|v_\iota'(w)|} \leq e^{2C_1}; \]
	\item For all $x \in [-1,1]$,
	\[ e^{-2C_1} \frac{|\O_\iota|}{2} \leq |v_\iota'(x)| \leq (1+2C_1) \frac{|\O_\iota|}{2}. \]
	\end{enumerate}
\end{lemma}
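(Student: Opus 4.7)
The plan is to exploit the identity $(\log|v_\iota'|)' = v_\iota''/v_\iota'$, together with the mean value theorem applied both to $\log|v_\iota'|$ and to $v_\iota$ itself. Part (a) follows by a standard distortion argument, while the upper bound in part (b) — which is the sharp constant $1+2C_1$ rather than the crude exponential $e^{2C_1}$ — needs a direct integral estimate rather than exponentiation.

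For (a), observe that $(\ref{distortion_basic})$ in the form stated for $\unp$ gives $|(\log|v_\iota'|)'(y)| = |v_\iota''(y)/v_\iota'(y)| \leq C_1$ for all $y \in [-1,1]$. Integrating from $w$ to $x$ yields
\[ \bigl|\log|v_\iota'(x)| - \log|v_\iota'(w)|\bigr| \leq C_1 |x - w| \leq 2C_1, \]
since the diameter of $[-1,1]$ is $2$. Exponentiating gives both inequalities in (a).

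For (b), first apply the mean value theorem to $v_\iota: [-1,1] \to \overline{\O_\iota}$ to obtain a point $w \in [-1,1]$ with $|v_\iota'(w)| = |\O_\iota|/2$. The lower bound then follows immediately from (a):
\[ |v_\iota'(x)| \geq e^{-2C_1} |v_\iota'(w)| = e^{-2C_1} \frac{|\O_\iota|}{2}. \]
For the upper bound, rather than applying (a) (which would only give $e^{2C_1}|\O_\iota|/2$), we use
\[ |v_\iota'(x)| - |v_\iota'(w)| = \int_w^x \bigl(|v_\iota'|\bigr)'(y)\, dy, \]
with $|(|v_\iota'|)'| = |v_\iota''| \leq C_1 |v_\iota'|$ pointwise by (\ref{distortion_basic}). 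Taking absolute values and using monotonicity of the integral together with $\int_{-1}^1 |v_\iota'(y)|\, dy = |\O_\iota|$, we get
\[ \bigl||v_\iota'(x)| - |v_\iota'(w)|\bigr| \leq C_1 \int_{-1}^{1} |v_\iota'(y)|\, dy = C_1 |\O_\iota|, \]
which, combined with $|v_\iota'(w)| = |\O_\iota|/2$, yields the claimed bound $|v_\iota'(x)| \leq (1+2C_1) |\O_\iota|/2$.

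The only mildly delicate step is the sharp upper bound in (b): the naive route through (a) loses the desired linear-in-$C_1$ dependence, so one must resort to the integral identity for $(|v_\iota'|)'$ and bound it pointwise against $|v_\iota'|$ before integrating.
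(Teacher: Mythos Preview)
Your proposal is correct and follows essentially the same route as the paper: part (a) is the standard distortion argument (which the paper just cites), and for (b) you use the mean value theorem to locate $w$ with $|v_\iota'(w)|=|\O_\iota|/2$, derive the lower bound from (a), and obtain the sharp upper bound by integrating $v_\iota''$ and bounding $|v_\iota''|\leq C_1|v_\iota'|$ before using $\int_{-1}^1|v_\iota'|=|\O_\iota|$. Your explicit remark that exponentiating (a) would only give $e^{2C_1}$ rather than $1+2C_1$ matches the paper's stated motivation for the lemma.
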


\begin{proof}[Proof of Lemma \ref{l:unp-consistency}]
	Part (a) is a standard result \cite{Gouezel04, Korepanov16}.
	
	To prove (b), we have that as a result of the intermediate value theorem there exists some $w \in [-1,1]$ such that
	\[ v_\iota'(w) = \frac{v_\iota(1)-v_\iota(-1)}{2} = \frac{|\O_\iota|}{2}.\]
	
	By part (a), $ e^{-2C_1} \leq \frac{|v_\iota'(x)|}{|v_\iota'(w)|}$.
	Additionally, the fundamental theorem of calculus gives that
	\[ v_\iota'(x) = v_\iota'(w) + \int_{w}^x v_\iota''(\xi) d\xi,\]
	and consequently 
	\begin{align*} |v_\iota'(x)| &\leq |v_\iota'(w)| + \int_{x}^w C_1 |v_\iota'(\xi)| |d\xi|\\
		&\leq \frac{|\O_\iota|}{2} + C_1 \int_{-1}^1 |v_\iota'(\xi)| d\xi
		= \left(\frac{1}{2}+C_1\right) |\O_\iota|, \end{align*}
	as required.

\end{proof}

\begin{remark}\label{r:unp-consistency-ana}
	Similarly, suppose that a map $f \in \bunp$ obeys analytic distortion condition (\ref{distortion_ana}) with  constant $C_{1,\delta}$. Then for all $x,w \in \check\Lambda_\delta$,
		\[ e^{-2C_{1,\delta}\cosh \delta } \leq \frac{|v_\iota'(x)|}{|v_\iota'(w)|} \leq e^{2C_{1,\delta}\cosh \delta }.\] 
\end{remark}

We now prove that the partition spacing condition (\ref{ppc}) is preserved under composition. Consequently, $\unp$ and $\bunp$ are closed under composition. 

\begin{lemma}\label{l:ppc-composition}
	
	Suppose $f$ and $g$ are Markov maps on $[-1,1]$ satisfying (\ref{ppc}), and that in addition $f$ has bounded distortion with parameter $C\brf_1$ and $g$ has uniform expansion parameter $\lambda\brg > 0$.
	
	Then $g\circ f$ satisfies (\ref{ppc}). 
\end{lemma}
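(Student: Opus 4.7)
The plan is to analyze the branch structure of $g \circ f$ explicitly and verify \eqref{ppc} for each branch by a case distinction on the location of the endpoint $\sigma$ relative to the $f$-branch containing the composite branch. Because $f$ and $g$ are both full-branch Markov and $g$ has nonvanishing derivative, the branches of $g \circ f$ are naturally indexed by $I^{(f)} \times I^{(g)}$ with
\[ \O^{(g\circ f)}_{\iota_1, \iota_2} = \O^{(f)}_{\iota_1} \cap f^{-1}(\O^{(g)}_{\iota_2}) = v^{(f)}_{\iota_1}\bigl(\O^{(g)}_{\iota_2}\bigr). \]
The partition spacing constant $\Xi^{(g \circ f)}$ is the supremum of $|\O^{(g\circ f)}_{\iota_1,\iota_2}|/d(\O^{(g\circ f)}_{\iota_1,\iota_2},\sigma)$ over the appropriate pairs and endpoints $\sigma \in \{-1,1\}$ not contained in the closure of the composite branch.

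Fix such a composite branch and an endpoint $\sigma$. The first case is when $\sigma \notin \overline{\O^{(f)}_{\iota_1}}$; then $\O^{(g\circ f)}_{\iota_1,\iota_2} \subseteq \O^{(f)}_{\iota_1}$ immediately gives $|\O^{(g\circ f)}_{\iota_1,\iota_2}| \leq |\O^{(f)}_{\iota_1}|$ and $d(\O^{(g\circ f)}_{\iota_1,\iota_2},\sigma) \geq d(\O^{(f)}_{\iota_1},\sigma)$, so the ratio is bounded by $\Xi^{(f)}$. The second case is when $\sigma$ is an endpoint of $\overline{\O^{(f)}_{\iota_1}}$, in which case $v^{(f)}_{\iota_1}(\tau) = \sigma$ for some $\tau \in \{-1,1\}$, and the hypothesis $\sigma \notin \overline{\O^{(g\circ f)}_{\iota_1,\iota_2}}$ translates to $\tau \notin \overline{\O^{(g)}_{\iota_2}}$. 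By the mean value theorem applied to $v^{(f)}_{\iota_1}$ there exist $\xi_1 \in \O^{(g)}_{\iota_2}$ and $\xi_2$ in the interval between $\O^{(g)}_{\iota_2}$ and $\tau$ such that
\[ \frac{|\O^{(g\circ f)}_{\iota_1,\iota_2}|}{d(\O^{(g\circ f)}_{\iota_1,\iota_2},\sigma)} = \frac{|v^{(f)'}_{\iota_1}(\xi_1)|}{|v^{(f)'}_{\iota_1}(\xi_2)|}\cdot\frac{|\O^{(g)}_{\iota_2}|}{d(\O^{(g)}_{\iota_2},\tau)}. \]
Applying Lemma~\ref{l:unp-consistency}(a) to the first factor gives a bound of $e^{2 C_1^{(f)}}$, and the second factor is bounded by $\Xi^{(g)}$.

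Combining both cases yields $\Xi^{(g\circ f)} \leq \max\{\Xi^{(f)},\, e^{2 C_1^{(f)}} \Xi^{(g)}\} < \infty$, giving \eqref{ppc} for $g \circ f$. The main obstacle is the second case: one must convert the partition spacing information about $g$ (sitting on $[-1,1]$) into a statement about its image under the inverse branch of $f$. The key is that $v^{(f)}_{\iota_1}$ maps the relevant endpoint $\tau$ of $[-1,1]$ exactly to the endpoint $\sigma$ of the $f$-branch, so that the ratio of lengths and the ratio of distances scale by values of $|v^{(f)'}_{\iota_1}|$ at two possibly different points; the hypothesis of bounded distortion of $f$ is precisely what tames this ratio. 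The positivity hypothesis $\lambda^{(g)} > 0$ is used implicitly to ensure that $g$ has a well-defined branch structure with existing inverse branches $v^{(g)}_{\iota_2}$, so that the composition has the branch decomposition described above.
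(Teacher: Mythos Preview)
Your proof is correct and follows the same overall strategy as the paper: identify the branches of $g\circ f$ as $v^{(f)}_{\iota_1}(\O^{(g)}_{\iota_2})$ and split into the two cases according to whether the endpoint $\sigma$ lies in the closure of the ambient $f$-branch $\O^{(f)}_{\iota_1}$.

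Where you differ is in the execution, and your estimates are in fact sharper. In Case~1 the paper passes through the preliminary bound $|\O_{\phi\gamma}|\le (1+2C^{(f)}_1)\tfrac12|\O^{(f)}_\phi|\,|\O^{(g)}_\gamma|$ and then uses the expansion of $g$ to control $|\O^{(g)}_\gamma|\le 2/\lambda^{(g)}$, obtaining $(1+2C^{(f)}_1)\lambda^{(g)\,-1}\Xi^{(f)}$; your containment argument gives simply $\Xi^{(f)}$. In Case~2 the paper bounds numerator and denominator separately via Lemma~\ref{l:unp-consistency}(b), picking up a factor $(1+2C^{(f)}_1)e^{2C^{(f)}_1}$; your direct application of the mean value theorem to both numerator and denominator yields only the distortion ratio $e^{2C^{(f)}_1}$. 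Consequently your final bound $\max\{\Xi^{(f)},e^{2C^{(f)}_1}\Xi^{(g)}\}$ is tighter than the paper's $(1+2C^{(f)}_1)\max\{e^{2C^{(f)}_1}\Xi^{(g)},\lambda^{(g)\,-1}\Xi^{(f)}\}$ and, as you observe, does not depend quantitatively on $\lambda^{(g)}$ at all.
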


\begin{proof}
	Let $\O_{\phi\gamma}= v\brf_\phi \left( v\brg_\gamma (\Lambda)\right)$ be a branch set of $g\circ f$. Let $\Pa \in \partial \Lambda$, i.e. $\Pa = \pm 1$. 
	
	Since $\O_{\phi\gamma} = v\brf_\phi\left(\O\brg_\gamma\right)$, by Lemma \ref{l:unp-consistency}(c) we have
	\begin{equation*} \frac{\left|\O_{\phi\gamma}\right|}{\left|\O\brg_\gamma\right|} \leq \left(1+2C\brf_1\right) \frac{\left|\O\brf_\phi\right|}{2},\end{equation*}
	and thus a preliminary bound on our ratio of interest:
	\begin{equation} 
	\frac{\left|\O_{\phi\gamma}\right|}{d(\O_{\phi\gamma},\Pa)} \leq \frac{\left(1+2C\brf_1\right) \frac{1}{2} \left|\O\brg_\gamma\right| \left|\O\brf_\phi\right|}{d(\O_{\phi\gamma},\Pa)}.\label{t-a-ppc-comp1}  \end{equation}
	
	We are interested in intervals for which $\Pa \notin \O_{\phi\gamma}$. If $\Pa\in \O_{\phi\gamma}$, then we need $\Pa \in \O\brf_\phi$ and $\hat f_\phi(\Pa)=:\tau \in \O\brg_\gamma$. Note that since $\Pa \in \partial \O\brf_\phi$, then $\tau\in\partial\Lambda$. Therefore, intervals $\O_{\phi\gamma}$ which do not contain $\Pa$ either have $\Pa \notin \O\brf_\phi$ or $\tau \notin \O\brf_\gamma$.
	
	We split into cases accordingly. In the first case where $\Pa \notin \O\brf_\phi$, we have that since $\O\brg_\gamma = v\brg_\gamma([-1,1])$, its length must be less than $2/\lambda\brg$. Since $\O_{\phi\gamma} \subseteq \O\brf_\phi$, we must have 
	$d\left(\O_{\phi\gamma},\Pa\right)\geq d\left(\O\brf_\phi,\Pa\right)$.
	Therefore from (\ref{t-a-ppc-comp1}),
	\[ \frac{\left|\O_{\phi\gamma}\right|}{d(\O_{\phi\gamma},\Pa)} \leq (1+2C\brf_1)\frac{1}{\lambda\brg} \Xi\brf,\] 
	where we used that $|\O\brg_\gamma|<|\Lambda|/\lambda\brg$ from the expansion assumption.
	
	For the second case, let $\Pb \in \partial \Lambda$ be such that $v\brf_\phi(\Pb)$ lies in between $\O_{\phi\gamma}$ and $\Pa$ and let $\Pc \in \partial \O\brf_\gamma$ such that  $v\brf_\phi(\Pc)$ is the nearest point in $\O_{\phi\gamma}$ to $\Pa$ (and thus, $\Pb$). Then $d(\O_{\phi\gamma},\Pa)$ is the length of the interval $[v\brf_\phi(\Pc),\Pa]$, which is bigger than the length of the interval $[v\brf_\phi(\Pc),v\brf_\phi(\Pb)] = v\brf_\phi([\Pc,\Pb])$. Using Lemma \ref{l:unp-consistency} the length of this last interval can be bounded:
	\[ \frac{\left|v\brf_\phi([\Pc,\Pb])\right|}{\left|[\Pc,\Pb]\right|} = 
	 \frac{\int_{\Pc}^{\Pb} \left| (v\brf_\phi)'(x)\right| |dx|}{\left|[\Pc,\Pb]\right|} 
	 \geq e^{-2C\brf_1} \frac{1}{2} \left|\O\brf_{\phi}\right|.  \]
	Furthermore, the distance between $\Pc$ and $\Pb$ is precisely $d(\O\brg_\gamma,\Pb)$. 
	
	Combining these results with (\ref{t-a-ppc-comp1}), we find that
	\[ \frac{\left|\O_{\phi\gamma}\right|}{d(\O_{\phi\gamma},\Pa)} \leq\left(1+2C\brf_1\right) e^{2C\brf_1} \frac{\O\brg_\gamma}{d(\O\brg,\Pb)} \leq \left(1+2C\brf_1\right) e^{2C\brf_1} \Xi\brg. \]
	
	Combining the two cases, then, we find that 
	\[  \Xi^{(g\circ f)} \leq \left(1+2C\brf_1\right)\max\left\{e^{2C\brf_1} \Xi\brg, \frac{1}{\lambda\brg} \Xi\brf\right\}, \]
	as required.
\end{proof}

	\section{Proof of Lemma \ref{l:cheby_distortion}}\label{a:distortion-proofs}

In this appendix we will prove 
Lemma \ref{l:cheby_distortion}, which states that standard properties of $f$ (e.g. differentiability of the distortion) imply the properties of $\cos^{-1}\circ f \circ \cos$ required to apply Lemma \ref{l:abstract_entry} in the proof of Theorem \ref{t:main}.

We remark that while the partition position condition (\ref{ppc}) is crucial for the proof in general, it is not necessary if one restricts to maps that only satisfy ($\DD_1$).

We also emphasise that this lemma gives very loose bounds for the $\Upsilon_n$ and $\Upsilon_{1,\delta}$, and that in practice one is best served by calculating these constants directly from the $\upsilon_\iota$.

We will first state and prove two lemmas upon which Lemma \ref{l:cheby_distortion} relies, and then prove the latter. 

\begin{lemma}\label{l:cheby_distortion_s}
	Suppose the map $f \in \bunp$ is piecewise $C^{n+1}$, choose $\sigma \in \partial\Lambda = \{\pm 1\}$ and let $\tau = v_\iota(\sigma)$. Define the gradient of the chord
	\[ \hat S_{\iota,\sigma}(x) = \frac{\tau - v_\iota(x)}{\sigma -x}.\]

	Then 
	\[ \hat S_{\iota,\sigma}^{(n)}(w) = \frac{1}{n+1} v^{(n+1)}(w) \]
	for some $w$ directly between $\sigma$ and $x$.
\end{lemma}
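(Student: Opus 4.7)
The plan is to recognise $\hat S_{\iota,\sigma}(x)$ as essentially the divided difference of $v_\iota$ between the points $\sigma$ and $x$, and then to exploit the standard integral representation of such a divided difference. Specifically, by the fundamental theorem of calculus with the substitution $y = x + t(\sigma - x)$,
\[ \hat S_{\iota,\sigma}(x) = \frac{v_\iota(\sigma) - v_\iota(x)}{\sigma - x} = \int_0^1 v_\iota'\bigl(x + t(\sigma - x)\bigr)\, dt. \]

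Next, I would differentiate this identity $n$ times under the integral sign with respect to $x$. Since $\partial_x [x + t(\sigma - x)] = 1 - t$, the chain rule gives
\[ \hat S_{\iota,\sigma}^{(n)}(x) = \int_0^1 (1-t)^n\, v_\iota^{(n+1)}\bigl(x + t(\sigma - x)\bigr)\, dt. \]
Differentiation under the integral is justified by dominated convergence: the piecewise $C^{n+1}$ assumption on $f$ yields a bound on $v_\iota^{(n+1)}$ over the (compact) chord between $x$ and $\sigma$.

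Finally, since the weight $(1-t)^n$ is non-negative on $[0,1]$ with $\int_0^1 (1-t)^n\,dt = \frac{1}{n+1}$, and since $v_\iota^{(n+1)}$ is continuous along the chord from $x$ to $\sigma$, the mean value theorem for integrals produces some $t_\ast \in [0,1]$ with
\[ \hat S_{\iota,\sigma}^{(n)}(x) = \frac{1}{n+1}\, v_\iota^{(n+1)}\bigl(x + t_\ast(\sigma - x)\bigr) = \frac{1}{n+1}\, v_\iota^{(n+1)}(w), \]
where $w := x + t_\ast(\sigma - x)$ lies directly between $x$ and $\sigma$, as required.

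There is no real obstacle here; the only point meriting attention is ensuring that the integral representation and the differentiation under the integral remain valid across the whole branch domain, which follows immediately from the $C^{n+1}$ regularity of $v_\iota$ on $\overline{\O_\iota}$.
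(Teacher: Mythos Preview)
Your proof is correct and clean. It is, however, a genuinely different route from the paper's. The paper proceeds by establishing, via induction on $n$, the explicit closed form
\[
\hat S_{\iota,\sigma}^{(n)}(x) = n!\,\frac{\tau - \sum_{m=0}^n \frac{1}{m!} v_\iota^{(m)}(x)(\sigma - x)^m}{(\sigma - x)^{n+1}},
\]
and then observes that the numerator is exactly the Taylor remainder of $v_\iota$ about $x$ evaluated at $\sigma$, so the Lagrange form of the remainder delivers the result. You instead start from the integral representation of the divided difference, differentiate under the integral, and use the weighted mean value theorem for integrals; in effect you are working with the integral form of the Taylor remainder rather than the Lagrange form. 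Your approach avoids the inductive bookkeeping and the need to recognise the Taylor polynomial in the resulting expression, at the modest cost of justifying differentiation under the integral (which, as you note, is immediate from the $C^{n+1}$ regularity of $v_\iota$ on $[-1,1]$). Both arguments are short; yours is arguably the more direct of the two.
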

\begin{proof}
	We can show by induction that for all $n \geq 0$
	\[ \hat S_{\iota,\sigma}^{(n)}(x) = n! \frac{\tau - \sum_{m=0}^n \frac{1}{m!} v_\iota^{(m)}(x)(\sigma - x)^m}{(\sigma-x)^{n+1}}.\]
	Since $v_\iota(\sigma) = \tau$, the lemma follows by Taylor's theorem. 
\qed\end{proof}

\begin{lemma}\label{l:cheby_distortion_t}
	Suppose $f \in \bunp$, $\iota\in I$ and $\sigma \in \partial\Lambda= \{\pm 1\}$ such that $v_\iota(\sigma) \notin \partial\Lambda$. Let $\tau_{\iota,\sigma} = \sigma \sgn v_\iota'(0)$ and
	 \[T_{\iota,\sigma}(x) = 1 -v_\iota(x)/\tau_{\iota,\sigma}.\]
	
	Then for $x\in [-1,1]$,
	$ T_{\iota,\sigma}(x) \geq \Xi^{-1} |\O_\iota|.$
	If $f$ satisfies analytic distortion condition (\ref{distortion_ana}) then there exists $\zeta \in (0,\delta]$ and $\mathfrak{K}_\zeta > 0$ such that for $z \in \check{\Lambda}_\zeta$
	$ |T_{\iota,\sigma}(z)| \geq \mathfrak{K}_\zeta |\O_\iota|.$
\end{lemma}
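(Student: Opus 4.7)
My plan is to reinterpret $T_{\iota,\sigma}$ geometrically on the real line and then push the resulting bound into the Bernstein ellipse by an integrated derivative estimate. For the real case, set $\tau := \tau_{\iota,\sigma} \in \{\pm 1\}$. Since $v_\iota(x) \in \O_\iota \subset [-1,1]$ for $x \in [-1,1]$, we have $\tau \cdot v_\iota(x) \leq 1$, so
$$T_{\iota,\sigma}(x) = 1 - v_\iota(x)/\tau = |\tau - v_\iota(x)|.$$
The choice $\tau = \sigma\,\sgn v_\iota'(0)$, together with the monotonicity of $v_\iota$, makes $v_\iota(\sigma)$ the unique endpoint of $\O_\iota$ lying on the $\tau$-side of the interval. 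The hypothesis $v_\iota(\sigma) \notin \partial\Lambda$ therefore forces $\tau \notin \overline{\O_\iota}$, since otherwise $\tau \in \partial\Lambda$ would itself be that endpoint and equal $v_\iota(\sigma)$. The partition-spacing condition \eqref{ppc} then applies and yields
$$T_{\iota,\sigma}(x) \geq d(\O_\iota, \tau) \geq \Xi^{-1}|\O_\iota|.$$

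For the analytic case, Lemma \ref{l:unp-consistency}(b) combined with Remark \ref{r:unp-consistency-ana} furnishes a uniform derivative bound
$$|v_\iota'(z)| \leq M|\O_\iota|,\qquad M := \tfrac{1}{2}(1+2C_1)\,e^{2C_{1,\delta}\cosh\delta}, \qquad z \in \check{\Lambda}_\delta.$$
For any $\zeta \leq \delta$ and any $z \in \check{\Lambda}_\zeta$, let $x_0$ be the closest point of $[-1,1]$ to $z$. Since Bernstein ellipses are convex and contain $[-1,1]$ (it is the segment between their foci), the segment from $z$ to $x_0$ lies in $\check{\Lambda}_\zeta \subset \check{\Lambda}_\delta$. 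A short calculation on the parametrisation $z = \cosh\zeta\cos t + i\sinh\zeta\sin t$ shows that $|z - x_0|$ is maximised at $z = \pm i\sinh\zeta$ and equals $\sinh\zeta$ there. Integrating the derivative bound along the segment therefore gives
$$|v_\iota(z) - v_\iota(x_0)| \leq M\,|\O_\iota|\,\sinh\zeta.$$
Noting that $T_{\iota,\sigma}(z) - T_{\iota,\sigma}(x_0) = (v_\iota(x_0) - v_\iota(z))/\tau$ has modulus $|v_\iota(z) - v_\iota(x_0)|$, the reverse triangle inequality combined with the real-line bound gives
$$|T_{\iota,\sigma}(z)| \geq \Xi^{-1}|\O_\iota| - M\sinh\zeta\cdot|\O_\iota|.$$
Choosing $\zeta \in (0,\delta]$ small enough that $M\sinh\zeta \leq \tfrac{1}{2}\Xi^{-1}$ (possible since the constants on the right are independent of $\iota$) yields the desired bound with $\mathfrak{K}_\zeta := \tfrac{1}{2}\Xi^{-1}$.

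The main obstacle here is not really mathematical but rather bookkeeping: keeping $\mathfrak{K}_\zeta$ uniform in $\iota$. This works precisely because both $|v_\iota'|$ and $d(\O_\iota,\tau)$ scale linearly with $|\O_\iota|$, so the branch-dependent factor $|\O_\iota|$ cancels out of the ratio $|T_{\iota,\sigma}(z)|/|\O_\iota|$. The only genuinely geometric input is the elementary computation that the Hausdorff distance from $\check{\Lambda}_\zeta$ to $[-1,1]$ equals the semi-minor axis $\sinh\zeta$.
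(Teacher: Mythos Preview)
Your proof is correct. The real-line argument is essentially identical to the paper's: both reduce to $T_{\iota,\sigma}(x)=|\tau-v_\iota(x)|\ge d(\tau,\O_\iota)\ge\Xi^{-1}|\O_\iota|$, with the paper writing this out as the sum of $d(\tau,\O_\iota)$ and a nonnegative mean-value remainder rather than invoking the distance directly.

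For the analytic part you take a genuinely simpler route. The paper bounds $\Re T_{\iota,\sigma}(z)$ by splitting $v_\iota(z)-v_\iota(\sigma)$ into a real-axis piece $v_\iota(\Re z)-v_\iota(\sigma)$ (bounded via the overshoot $\cosh\zeta-1$) and a vertical piece $v_\iota(z)-v_\iota(\Re z)$, on which it uses a second-order Taylor expansion so that the first-order term is purely imaginary and drops out, leaving an $O(\sinh^2\zeta)$ correction. Your argument replaces all of this with a single first-order integral of $|v_\iota'|$ along the segment to the nearest point of $[-1,1]$, which has length at most $\sinh\zeta$. What you lose is quantitative: the paper's correction is $O(\zeta^2)$ whereas yours is $O(\zeta)$, so the paper can in principle certify a larger admissible $\zeta$ for a given $\mathfrak{K}_\zeta$. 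Since the lemma only asserts existence of some $\zeta\in(0,\delta]$, your cruder bound is entirely sufficient, and the argument is cleaner.
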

\begin{proof}
	Recalling that $\tau_{\iota,\sigma}^2 = 1$ we can write
	\[ T_{\iota,\sigma}(x) = \tau_{\iota,\sigma} (\tau_{\iota,\sigma} - v_\iota(\sigma)) + \tau_{\iota,\sigma} (v_\iota(\sigma)-v_\iota(x)).\]
	Since $\tau_{\iota,\sigma} - v_\iota(\sigma)$ has the same sign as $\tau_{\iota,\sigma}$, the first term can be written as a positive quantity $|\tau_{\iota,\sigma} - v_\iota(\sigma)|$ which is equal to $d(\tau_{\iota,\sigma},\O_\iota)$. By the partition spacing condition (\ref{ppc}), we have $d(\tau_{\iota,\sigma},\O_\iota) \geq \Xi^{-1} |\O_\iota|$.
	
	Furthermore, one can apply Taylor's theorem to the second term to get
	that $ \tau_{\iota,\sigma} (v_\iota(\sigma)-v_\iota(x)) = \tau_{\iota,\sigma} (\sigma - x) v_\iota'(w)$
	for some $w$ between $x$ and $\sigma$. Since $v_\iota'$ keeps its sign on $[-1,1]$ and the sign of $\sigma - x$ is simply the sign of $\sigma$, the definition of $\tau_{\iota,\sigma}$ means that the second term is positive on $[-1,1]$. Thus, for $x \in [-1,1]$,
	$ T_{\iota,\sigma}(x) \geq \Xi^{-1} |\O_\iota|.$
	
	On the analytic domain $\check{\Lambda}_\zeta$ the situation is more complicated. We write that
\begin{equation}
		\Re T_{\iota,\sigma}(x) = d(\tau_{\iota,\sigma},\O_\iota) - \tau_{\iota,\sigma} \Re \left(v_\iota(x)-v_\iota(\Re x)\right) - \tau_{\iota,\sigma} \left(v_\iota(\Re x) - v_\iota(\sigma)\right) 
		,\label{l-cheby_distortion_t_re}
\end{equation}
and bound terms from below.

Set $\mathfrak{C}_\zeta = e^{2C_{1,\delta}\cosh\zeta}$. We have that for any point $x$ 
in $\check{\Lambda}_{\zeta}$, 
$ |v'(x)| \leq \mathfrak{C}_\zeta \frac{1}{2}|\O_\iota|$
as a result of Remark \ref{r:unp-consistency-ana}. We will use this fact in the following discussion.

The Bernstein ellipse $\check{\Lambda}_{\zeta}$ has major axis $\cosh\zeta\cdot[-1,1]$ and minor axis $i\sinh\zeta\cdot[-1,1]$. As a consequence every point $w$ in $\check{\Lambda}_\zeta$ has $\Re w \leq \cosh\zeta$ and $|\Im w| \leq \sinh\zeta$. 

We have by Taylor's theorem that
\[ \tau_{\iota,\sigma} \Re \left(v_\iota(x)-v_\iota(\Re x)\right) = \tau_{\iota,\sigma} \Re \left(v_\iota'(\Re x) i\Im x - \frac{v_\iota''(w)}{2} \Im x^2\right)\]
for $w$ between $x$ and $\Re x$ (i.e. in $\check{\Lambda}_\zeta$). Thus,
\[ |\tau_{\iota,\sigma} \Re \left(v_\iota(x)-v_\iota(\Re x)\right)| \leq \frac{C_{1,\delta} \mathfrak{C}_\zeta |\O_\iota|}{4} \sinh^2\zeta.\]

Furthermore,
\[ \tau_{\iota,\sigma} \left(v_\iota(\Re x) - v_\iota(\sigma)\right) = \sigma^{-1} \sgn v_\iota'(0) (\Re x - \sigma) v_\iota'(w)\]
for $w$ between $\Re x$ and $\sigma$, i.e. in $\check{\Lambda}_\zeta \cap \R$. Since $v_\iota' \neq 0$ on $\check{\Lambda}_\zeta$ because of the bounded distortion condition (\ref{distortion_ana}), and $v_\iota'$ must be real on $\check{\Lambda}_\zeta \cap \R$ as it is real on $[-1,1]$ and analytic on the whole interval, we have $\sgn v_\iota'(w) = \sgn v_\iota'(0)$ and so
\begin{align*} \tau_{\iota,\sigma} \left(v_\iota(\Re x) - v_\iota(\sigma)\right) &= (\Re x/\sigma - 1) |v_\iota'(w)|\\ &\leq (\cosh\zeta - 1)|v_\iota'(w)| \\&\leq (\cosh\zeta - 1) \mathfrak{C}_\zeta \frac{|\O_\iota|}{2}.\end{align*}

As a result we have from (\ref{l-cheby_distortion_t_re})
\[ |T_{\iota,\sigma}(x)|\geq \Re T_{\iota,\sigma}(x) \geq \left(\Xi^{-1} - \frac{\mathfrak{C}_\zeta}{4}\left(C_{1,\delta} \sinh^2\zeta + 2\cosh\zeta-2\right)\right)|\O_\iota|.\]
When $\zeta$ is small enough, the term multiplying $|\O_\iota|$ is positive. 
\qed\end{proof}

With these lemmas in hand, we can now prove Lemma \ref{l:cheby_distortion}.

\begin{proof}[
	Lemma \ref{l:cheby_distortion}]
	
	We begin with the first part of part (a), bounding derivatives of the $\upsilon_\iota$. We will do this by first proving a formula for the derivatives of $\upsilon_\iota$ and then bounding terms in this formula to get overall bounds.
	
	Let $\pi_n := n \mod 2$.
	 We claim that
	\begin{equation} \upsilon_\iota^{(n+1)}(\cos^{-1} x) = \sum_{q+r+s \leq n} a_{q,r,s,n}(x) Y_{\iota,1}^{q,n}(x) Y_{\iota,-1}^{r,n}(x) v^{(s+1)}(x),\label{l-cheby-distortion-claim}\end{equation}
	where $a_{q,r,s,n}$ are polynomials in $x$ with coefficients independent of $f$, and
	\begin{equation}  Y_{\iota,\sigma}^{m,n}(x) = \begin{cases}
		(1- x\sigma^{-1})^{\frac{\pi_{n}}{2}} \left( S_{\iota,\sigma}^{-1/2} \right)^{(m)},& v_\iota(\sigma) \in \{-1,1\}, \\
	(1- x\sigma^{-1})^{\frac{\pi_{n}}{2}} \left( T_{\iota,\sigma}^{-1/2} \right)^{(m)},& v_\iota(\sigma) \notin \{-1,1\}.
	\end{cases} \label{l-cheby-distortion-y}
	\end{equation}
	
	We prove this claim by induction. Suppose without loss of generality that $\sgn v_\iota' = 1$. 
	
	When $n = 0$, we have that 
	\[ \upsilon_\iota'(\cos^{-1} x) = \sqrt{\frac{1-x}{1-v_\iota(x)}} \sqrt{\frac{1+x}{1+v_\iota(x)}} v_\iota'(x).\]
	From (\ref{l-cheby-distortion-y}), we find that 
	\[ Y_{\iota,\sigma}^{0,0}(x) =
	\sqrt{\frac{1-x\sigma^{-1}}{1-v_\iota(x)\sigma^{-1}}}, \]
	and thus (\ref{l-cheby-distortion-claim}) follows for $n=0$.
	
	Suppose, then, that (\ref{l-cheby-distortion-claim}) is true for some $n$. Then
	\[  \upsilon_\iota^{(n+2)}(\cos^{-1}(x)) = \sqrt{1-x^2} (\upsilon_\iota^{(n+1)}\circ \cos^{-1})'(x).\]
	All we need to show is that $\sqrt{1-x\sigma^{-1}} Y_{\iota,\sigma}^{m,n}(x)$ and $\sqrt{1-x\sigma^{-1}} (Y_{\iota,\sigma}^{m,n})'(x)$ can be written as a product of $Y_{\iota,\sigma}^{m,n+1}(x)$ (and for the derivative possibly also $Y_{\iota,\sigma}^{m+1,n+1}(x)$), and polynomials in $x$. In the case where $v_\iota(\sigma) \in \{-1,1\}$, we have
	\begin{align*} \sqrt{1-x\sigma^{-1}} Y_{\iota,\sigma}^{m,n}(x) &= (1- x\sigma^{-1})^{\frac{\pi_{n}+1}{2}} \left( S_{\iota,\sigma}^{-1/2} \right)^{(m)}\\ &= (1-x\sigma^{-1})^{\pi_{n}} Y_{\iota,\sigma}^{m,n+1}(x)\end{align*}
	and
	\begin{align*} \sqrt{1-x\sigma^{-1}} (Y_{\iota,\sigma}^{m,n})'(x) &= (1- x\sigma^{-1})^{\frac{\pi_{n}+1}{2}} \left( 		S_{\iota,\sigma}^{-1/2} \right)^{(m+1)} \\&\qquad
			 - \pi_n \sigma^{-1} (1- x\sigma^{-1})^{\frac{\pi_{n}-1}{2}} \left( S_{\iota,\sigma}^{-1/2} \right)^{(m+1)} \\
		&= (1-x\sigma^{-1})^{\pi_{n}} Y_{\iota,\sigma}^{m+1,n+1}(x) \\&\qquad
			- \pi_n \sigma^{-1} (1-x\sigma^{-1})^{\pi_{n}-1} Y_{\iota,\sigma}^{m,n+1}(x)\\
		&= (1-x\sigma^{-1})^{\pi_{n}} Y_{\iota,\sigma}^{m+1,n+1}(x) - \pi_n \sigma^{-1} Y_{\iota,\sigma}^{m,n+1}(x), \end{align*}
	where in the last line we removed the $(1-x\sigma^{-1})^{\pi_{n}-1}$ element from the last term by using that the last term is zero unless $\pi_n=1$. The relation when $v_\iota(\sigma)\notin\{-1,1\}$ is clearly analogous, from which the claim falls.
	
	We now attempt to bound the expression in (\ref{l-cheby-distortion-claim}). To bound the $Y_{\iota,\sigma}^{m,n}$, we need to bound derivatives of $S_{\iota,\sigma}^{-1/2}$ and $T_{\iota,\sigma}^{-1/2}$. One may show by induction that for $n \geq 1$ there exist multivariate polynomials $q_n$ such that 
	for any function $U$,
	\begin{equation} (U^{-1/2})^{(n)} = U^{-1/2} q_n\left(\frac{U'}{U}, 
	\ldots,\frac{U^{(n)}}{U}\right).\label{l-cheby-distortion-derivs}\end{equation}
	
	By Lemma \ref{l:cheby_distortion_s}, we have that when $v_\iota(\sigma) \in \{-1,1\}$
	\[ |{S_{\iota,\sigma}^{(n)}(x)}| = \frac{1}{n+1} |v^{(n+1)}(w)|\]
	for some $w \in [-1,1]$.
	Using distortion bound ($\DD_{n}$) and Lemma \ref{l:unp-consistency} we can bound this again to get that 
	\[ |{S_{\iota,\sigma}^{(n)}(x)}| \leq \frac{C_n e^{2C_1}}{n+1} |v'(x)|.\]
	We also have that
	$ |S_{\iota,\sigma}(x)| = |v'(w)| > e^{-2C_1} |v'(x)|$
		for some $w \in [-1,1]$.
		
	Substituting these bounds into (\ref{l-cheby-distortion-derivs}) we find that
	\[ \left| \left( S_{\iota,\sigma}^{-1/2} \right)^{(n)}\right| \leq e^{C_1} |v'|^{-1/2} |q_n|\left(\frac{C_1 e^{4C_1}}{2},\ldots,\frac{C_n e^{4C_1}}{n+1}\right) \]
	when $v_\iota(\sigma)\in\{-1,1\}$. 
	
	Similarly, we have that %
	$ |T_{\iota,\sigma}^{(n)}(x)| = |v^{(n)}(x)| \leq C_n |v'(x)| $
	and, by Lemma \ref{l:cheby_distortion_t}, when $v_\iota(\sigma) \notin \{-1,1\}$ that
	$ |T_{\iota,\sigma}(x)| \geq \Xi^{-1} |\O_\iota| \geq 2\Xi^{-1} e^{-2C_1} |v'(x)|. $
	These bounds can be substituted into (\ref{l-cheby-distortion-derivs}) similarly to give
		\[ \left| \left( T_{\iota,\sigma}^{-1/2} \right)^{(n)}\right| \leq \sqrt{\frac{\Xi}{2}} e^{C_1} |v'|^{-1/2} |q_n|\left(\frac{C_1 \Xi e^{2C_1}}{2},\ldots,\frac{C_n \Xi e^{2C_1}}{2}\right) \]
		when $v_\iota(\sigma)\notin\{-1,1\}$.

	Thus, there exist constants $\mathfrak{k}^{m,n}$ depending on the distortion constants and partition spacing constant such that for all $\iota\in I$ and $\sigma\in\{-1,1\}$, we have
	$ |Y_{\iota,\sigma}^{m,n}(x)|\leq \mathfrak{k}^{m,n} |v'(x)|^{-1/2}$.
	
	Returning to (\ref{l-cheby-distortion-claim}), we have that since $|v^{(s+1)}(x)|\leq C_s |v'(x)|$,
	\[ |\upsilon_\iota^{(n+1)}(\cos^{-1} x)| \leq \sum_{q+r+s \leq n} |a_{q,r,s,n}|(1)\ \mathfrak{k}^{q,n}\mathfrak{k}^{r,n} C_s,\]
	for $x \in [-1,1]$, and thus $|\upsilon_\iota^{(n+1)}(\theta)|$ is bounded by the same constant for $\theta \in [0,2\pi\beta_{\iota'})$.
	
	The proof of the first part of part (b) is essentially the same as the above with $n=1$. The major difference is that we apply Remark \ref{r:unp-consistency-ana} and the second bound in Lemma \ref{l:cheby_distortion_t} instead of Lemma \ref{l:unp-consistency} and the first bound, respectively. We also use that $\cos^{-1}\check{\Lambda}_{\zeta} = \Lambda^{\beta_{\iota'}}_\zeta$ so bounds on $\upsilon^{(n)}(\theta)$ transfer directly to bounds on $\upsilon^{(n)}(\cos^{-1}(x))$.
	
	The second parts of (a) and (b) are much more straightforward. In both cases we seek to bound 
	\begin{equation} \left|\frac{h_\iota^{(n)}}{h_\iota}\right| = \frac{\left|(v_\iota' \circ \cos)^{(n)}\right|}{\left|v_\iota' \circ \cos\right|} \label{l-cheby-distortion-h}\end{equation}
			on appropriate domains. 
			The $n$th derivative of $v_\iota'\circ\cos$ can be written as a linear combination of $v_\iota^{(m+1)}\circ\cos, m \leq n$ with coefficients of trignometric polynomials. Trigonometric polynomials are bounded on $[0,2\pi\beta_{\iota'}]$ and $\Lambda^{\beta_{\iota'}}_\zeta$; on these respective domains, the $|v_\iota^{(m+1)}\circ\cos|$ are bounded by $C_m |v_\iota'\circ \cos|$ and by $C_{1,\delta} |v_\iota'\circ \cos|$ for $m=1$ respectively. Thus, we find that (\ref{l-cheby-distortion-h}) are bounded by constants depending on $C_{m}$, $m\leq n-1$, and in the analytic case on $\zeta$ (which parameterised $\check{\Lambda}_\zeta$) and $C_{1,\zeta}$.
\qed\end{proof}

%
%
		\section{Explicit bounds on the norm of the solution operator in $BV$}\label{a:bv-norm}
	
	In \cite{Korepanov16}, explicit a priori bounds on decay of correlations were stated in the Lipschitz norm. Specifically, if a map on $[0,1]$ has expansion coefficient $\lambda$ and (\ref{distortion_basic}) distortion constant $C_1$, then with $\zsp$ the space of zero-integral functions on $[0,1]$, the following bound holds:
	\begin{align}
	R &= \frac{2C_1}{1-\lambda^{-1}}\notag\\
	D &= 4e^{R}(1+R),\notag\\
	\xi &= \frac{1}{2} e^{-R} (1-\lambda^{-1}),\notag\\
	\| \tro^n |_{\zsp} \|_{\Lip} &\leq  D e^{-\xi n}.\label{korepanov-decay}
	\end{align}
	
	In this appendix we sketch how these explicit bounds work through to bound $\|\sch\|_{BV}$.
	
	Let $\Lip([0,1])$ be the space of Lipschitz functions on the interval $[0,1]$ with the usual norm.
	
	Suppose that $\|\tro^{n}|_{\zsp}\|_{\Lip} \leq K_n < 1/2$. 	Suppose that $g \in BV([0,1]) \cap \zsp $ with $\|g\|_{BV} = 1$. Let $\hat g_{\mathfrak{n}}$ be the piecewise linear interpolant to $g$ at the points $0,\frac{1}{\mathfrak{n}},\frac{2}{\mathfrak{n}},\ldots,1$. It can be seen that $\Lip \hat g_{\mathfrak{n}} \leq \mathfrak{n}$ and $ \|\hat g_{\mathfrak{n}} - g \|_1 \leq \frac{1}{2\mathfrak{n}}$.
	
	Consequently,
	\begin{align*}\|\tro^n g\|_1 &\leq \|\tro^n \hat g_{\mathfrak{n}}\|_1 + \|\tro^n (g - \hat g_{\mathfrak{n}})\|_1 \\
	&\leq \frac{1}{5} \|\tro^n \hat g_{\mathfrak{n}}\|_{\Lip} + \| g - \hat g_{\mathfrak{n}}\|_1	\\	
	&\leq \frac{K_n}{5} \left(\Lip \hat g_{\mathfrak{n}} + \|\hat g_{\mathfrak{n}}\|_{\infty}\right) + \| g - \hat g_{\mathfrak{n}}\|_1 \\
	& \leq \frac{K_n}{5} \left(\mathfrak{n} + 1\right) + \frac{1}{2\mathfrak{n}},
	\end{align*}
	where we used that $\|h\|_{\Lip} \geq 5\|h\|_1$ and $\|h\|_{BV} \geq \|h\|_{\infty}$ for $h\in\zsp$. 
	
	Setting $\mathfrak{n}=\lceil K_n^{-1/2} \rceil$, we have 
	\[ \|\tro^n g\|_1 \leq \frac{\sqrt{K_n}(7+4\sqrt{K_n})}{10} \leq \sqrt{K_n}. \]
	
	Hence, as a result of the standard $BV$ Lasota-Yorke inequality \cite{Galatolo14} we find that
	\begin{equation} \| \tro^{m+n} g \|_{BV} \leq \frac{5}{4} |\tro^{m+n} g|_{BV} \leq  \frac{5}{4}(\lambda^{-m}  C_1 \sqrt{K_n}). \label{a-bv-norm-1} \end{equation}
	
	Using that $\|\tro^n|_\zsp\|_{\Lip}\leq D e^{-\xi n}$ from (\ref{korepanov-decay}), and choosing 
	\begin{align*} n &= \left\lceil \frac{4+2\log(\max\{C_1,1\} \sqrt{D}))}{\xi}\right\rceil\\
	m &= \left\lceil \frac{2}{\log\lambda}\right\rceil,\end{align*}
	we have 
	\[\| \tro^n |_{\zsp} \| \leq \frac{e^{-4}}{ \min\{1,C_1^{-2}\} D^{-1}}  =: K_n < 1/2.\]
	Consequently from (\ref{a-bv-norm-1}) we have that $ \| \tro^{m+n} \|_{BV} \leq \frac{5}{2}e^{-2} \leq \frac{2}{5}$.
	
	As a result,
	\begin{equation} \left\| \sum_{k=0}^\infty \tro^k \right\|_{BV} \leq  \left\| \sum_{k=0}^\infty \tro^{(m+n)k} \right\|_{BV} \left\| \sum_{k=0}^{m+n-1} \tro^k \right\|_{BV}  \leq  \frac{5}{3}  (m+n)C', \label{transfer_sum_bound} \end{equation}
	where 
	$C' := 1 + \frac{1}{3}\frac{C_1}{1-\lambda^{-1}} \geq \sup_{n\in \N} \| \tro^n\|_{BV} \leq $ This bounding property of $C'$ is the result of the Lasota-Yorke inequality and the fact that $\|g\|_{BV}\geq 3\|g\|_{1}$ for $g\in BV\cap\zsp$.
	
	As a result of (\ref{solution-sum}), we finally obtain the a priori bound on the solution operator
	\begin{equation}\|\sch\|_{BV} \leq 1 + \frac{5}{3}  (m+n)C' (3+C'). \label{solution_bound} \end{equation}

		%
		%
		%
		%
		
\end{appendix}
	
	\bibliographystyle{siam}
	\bibliography{Spectral-bib}
\end{document}